\newtheorem{theorem}{Theorem}[section]
\newtheorem{corollary}[theorem]{Corollary}
\newtheorem{lemma}[theorem]{Lemma}
\newtheorem{proposition}[theorem]{Proposition}
\newtheorem{conjecture}[theorem]{Conjecture}
\newtheorem{definition}[theorem]{Definition}
\theoremstyle{remark}
\newcommand{\N}{{\mathbb{N}}}
\newcommand{\Z}{{\mathbb{Z}}}
\newcommand{\F}{{\mathbb{F}}}
\newcommand{\R}{{\mathbb{R}}}
\begin{document}

\author{Christian Elsholtz, Adam J.~Harper\thanks{AJH was supported by a Doctoral Prize from the Engineering and Physical Sciences Research Council of the United Kingdom, and by a postdoctoral fellowship from the Centre de recherches math\'{e}matiques in Montr\'{e}al. He would also like to thank the Technische Universit\"at Graz for hospitality during his visit in September 2011, when the research for this paper started.}}
\title{Additive decompositions of sets\\ with restricted prime factors}

\maketitle
\begin{abstract}
We investigate sumset decompositions of quite general sets with restricted prime factors. We manage to handle certain sets, such as the smooth numbers, even though they have little sieve amenability, and conclude that these sets cannot be written as a ternary sumset. This proves a conjecture by S\'{a}rk\"ozy. We also clean up and sharpen existing results on sumset decompositions of the prime numbers.
\end{abstract}

\section{Introduction}{\label{sec:introduction}}

At the beginning of his two volume treatise on additive number theory \cite{Ostmann:1956} Ostmann introduces the concepts of additive decomposability, and asymptotic additive decomposability, of a set:

\begin{definition}[See \cite{Ostmann:1956}, 
vol.~1, p.~5]{\label{def:asumptoticirreducible}}
Let ${\cal S}$ be a set of positive integers.
We say that ${\cal S}$ is \emph{asymptotically
 additively irreducible}
(or we say that \emph{no asymptotic additive
decomposition exists}) if
there do not exist two sets of positive integers ${\cal A}$ and ${\cal 
B}$, with at least two elements each,
 such that for a sufficiently large $x_0$:
\[({\cal A+B})\cap [x_0, \infty)= {\cal S}\cap [x_0,\infty).\]
Here the sumset 
of ${\cal A}$ and ${\cal B}$ is defined by
$\mathcal{A} + \mathcal{B} := \{a+b : a \in \mathcal{A}, b \in \mathcal{B}\}$.

If ${\cal S}$ is asymptotically additively reducible we write this as 
${\cal S}\sim{\cal A}+{\cal B}$.
\end{definition}

Wirsing \cite{Wirsing:1953} showed that almost all sets of integers
are asymptotically additively irreducible. But it seems very difficult to prove whether a given set $\mathcal{S}$ that ``occurs in nature'' is asymptotically additively irreducible or not. Shortly we will describe some of the results obtained previously. The main purpose of this paper is to develop a more structural approach, rather than only looking at special examples, and to identify a large family of multiplicatively defined sets (including sets of smooth numbers, the set of primes, and sets of integers composed from dense subsequences of the primes) for which one has the same consequences: if the sets can be asymptotically additively decomposed, then both summands must have counting functions about $x^{1/2+o(1)}$ (see Theorem \ref{genthm} for a general statement of this type); moreover there is no asymptotic additive decomposition into three sets.

Methodologically, the new ingredients of this paper are the use of Selberg's sieve, in combination with the large and larger sieves, to handle less regular sets; and the use of estimates for sums of general multiplicative functions, to make the process of combining the sieve arguments more efficient and general. The former is ultimately what allows us to treat the smooth numbers example, and the latter is what allows us to abstract the arguments into the general Theorem \ref{genthm}, and to obtain some improvements in the prime numbers example. We also borrow an idea of S\'{a}rk\"ozy \cite{Sarkozy:2012} from the finite field case, by using a sumset inequality of Ruzsa \cite{Ruzsa:2009} to deduce ternary indecomposability results from our binary counting results. This is simpler and more widely applicable than the previous local approach to such deductions.

\vspace{12pt}
The most frequently studied case in the literature is Ostmann's conjecture.
\begin{conjecture}[Ostmann, \cite{Ostmann:1956}, volume 1, page 13]
The set $\mathcal{P}$ of primes is asymptotically additively irreducible.
\end{conjecture}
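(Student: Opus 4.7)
The plan is a contradiction argument. Suppose $\mathcal{A}, \mathcal{B}$ each have at least two elements and $(\mathcal{A}+\mathcal{B})\cap [x_0,\infty) = \mathcal{P}\cap [x_0,\infty)$; let $A(x), B(x)$ denote the counting functions. The first concrete step is a high-dimensional Selberg sieve applied to $\mathcal{A}$, exploiting that for every $a \in \mathcal{A}$ exceeding $x_0$ and every $b \in \mathcal{B}\cap[1,\sqrt{x}]$ the sum $a+b$ is prime and hence coprime to all primes $p \leq \sqrt{x}$. Reading the residue restrictions $a \not\equiv -b \pmod{p}$ as a sieve of dimension $B(\sqrt{x})$ yields
\[
A(x) \;\ll\; x \prod_{p \leq \sqrt{x}} \Bigl(1 - \tfrac{\min(B(\sqrt{x}),\,p-1)}{p}\Bigr),
\]
and symmetrically for $B(x)$. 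Combined with the trivial lower bound $A(2x) B(2x) \geq \pi(2x) - O(1) \gg x/\log x$ coming from the assumed equality of sumsets, an optimization forces both counting functions to be $x^{1/2+o(1)}$, which is the binary-summand content of Theorem~\ref{genthm}.

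Next I would refine these bounds using the larger sieve of Gallagher together with the mean-value estimates for multiplicative functions developed in the paper, in order to replace $o(1)$ by an explicit power of $\log\log x$. Two further ingredients enter: residue-class constraints forcing $\mathcal{A}\bmod p$ and $\mathcal{B}\bmod p$ to be sum-free with respect to $0$ for every small prime $p$; and the large sieve applied in the range $q \leq x^{1/2+\delta}$, propagating those constraints slightly beyond the natural Selberg limit. In parallel, I would extract a lower bound for the number of representations of primes as $a+b$ with $a \in \mathcal{A},\, b \in \mathcal{B}$ weighted by a Selberg majorant for $\mathcal{P}$; by Bombieri--Vinogradov-type mean-value theorems this equals $(1+o(1))\pi(x)$. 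Equating the resulting upper and lower bounds converts the decomposition hypothesis into a precise inequality between explicit sieve constants.

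The hard part, and the reason the conjecture has resisted attack for over sixty years, will be the last step: forcing that inequality to be strict. Here one runs squarely into the parity barrier. The high-dimensional Selberg sieve for $\mathcal{A}$ gives a leading constant that is at most twice the Hardy--Littlewood prediction; the matching lower bound from Bombieri--Vinogradov contributes a term of the same order of magnitude, leaving a positive multiplicative slack rather than a contradiction. Closing that gap unconditionally would require either a non-parity-respecting sieve (currently unavailable) or genuine arithmetic input beyond Bombieri--Vinogradov, such as the Elliott--Halberstam conjecture with level $x^{1-\varepsilon}$. An additive-combinatorial alternative would be a Freiman-type structure theorem ruling out any sumset representation of $\mathcal{P}$ whose factors both have density $x^{-1/2+o(1)}$; no such theorem is presently known. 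Consequently the plan above delivers the quantitative $x^{1/2+o(1)}$ statement of Theorem~\ref{genthm} and, via Ruzsa's sumset inequality, ternary indecomposability, but the full binary conjecture of Ostmann appears to remain genuinely out of reach of current techniques.
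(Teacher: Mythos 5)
You were asked to prove what is, in the paper, stated only as a \emph{conjecture}: Ostmann's conjecture is not proved there (nor anywhere else), and the paper's own contribution on the primes is strictly partial --- namely Theorem \ref{Ostmann}, which shows that any putative decomposition $\mathcal{P}\sim\mathcal{A}+\mathcal{B}$ must satisfy $x^{1/2}/(\log x\log\log x)\ll\mathcal{A}(x),\mathcal{B}(x)\ll x^{1/2}\log\log x$, together with the (previously known) ternary indecomposability, which in this paper is rederived from the binary bounds via Ruzsa's inequality (Lemma \ref{lem:Ruzsa}). Your proposal correctly recognizes this state of affairs: you do not prove the conjecture, you explain why the parity barrier blocks the final step, and the partial results you sketch are essentially the ones the paper actually establishes, by essentially the paper's methods (inverse sieve via Gallagher's larger sieve, the large sieve, Hildebrand's lower bound for mean values of multiplicative functions, and Ruzsa's inequality for the ternary statement). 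So as an assessment of what is and is not provable, your answer is accurate and aligned with the paper.

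Two technical caveats on the sketch itself. First, your opening display $A(x)\ll x\prod_{p\le\sqrt{x}}\bigl(1-\min(B(\sqrt{x}),p-1)/p\bigr)$ is not available directly: the sieve only excludes, for each $p$, the $\nu_{\mathcal{B}}(p)$ residue classes actually occupied by $\mathcal{B}$ modulo $p$, and there is no a priori reason this equals $\min(B(\sqrt{x}),p-1)$; one must first prove that $\mathcal{B}$ occupies many classes modulo most $p$, which is exactly the role of the larger-sieve ``inverse sieve'' step (Lemma \ref{InverseSieve}) --- you defer this to a later refinement, but without it the first display is unjustified as stated, and a high-dimensional Selberg sieve is not the natural tool here (the paper uses Montgomery's large sieve for this step in the primes case). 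Second, the claim that a Selberg-majorant plus Bombieri--Vinogradov argument yields a representation count $(1+o(1))\pi(x)$ and hence ``a precise inequality between explicit sieve constants'' is optimistic: the paper's sharper-than-$x^{1/2+o(1)}$ information comes instead from the observation that $\nu_{\mathcal{A}}(p)+\nu_{\mathcal{B}}(p)\le p$ forces both sets into roughly $p/2$ classes, combined with Hildebrand's theorem to control the resulting sieve denominator, giving the $\log\log x$ factors of Theorem \ref{Ostmann}. Your concluding judgement --- that current sieve and additive-combinatorial technology yields only the binary counting bounds and ternary indecomposability, not the conjecture itself --- matches the paper.
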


This conjecture, which is sometimes called the ``inverse Goldbach conjecture'', has sparked considerable interest. Erd\H{o}s \cite{Erdos:1977} called it ``an old and very fascinating conjecture'', and in \cite{Erdos:1986} he called it ``one of the most interesting unconventional problems on primes'', and he included it in several more problem collections. A variety of methods have been applied to try to attack the problem: congruence, gap or
density considerations (Ostmann \cite{Ostmann:1956},
Laffer and Mann \cite{LafferandMann:1964}, 
Hornfeck (see Ostmann \cite{Ostmann:1956}, volume I, page 13), 
Puchta \cite{Puchta:2002}); 
the small sieve (Hornfeck \cite{Hornfeck:1954}, 
Mann \cite{Mann:1965},
Croot and Elsholtz \cite{CrootandElsholtz:2005}); 
the large sieve 
(Pomerance, S\'{a}rk\"ozy and Stewart 
\cite{PomeranceandSarkozyandStewart:1988}, 
Hofmann and Wolke \cite{HofmannandWolke:1996}, 
Elsholtz \cite{Elsholtz:2001archiv}, 
Croot and Elsholtz \cite{CrootandElsholtz:2005}); 
and a combination of the large sieve method with Gallagher's larger
sieve (Elsholtz \cite{Elsholtz:2001mathematika}, \cite{Elsholtz:2006}). 
See the first author's paper \cite{Elsholtz:2006} 
for some further historical references.

It is known that a ternary decomposition ${\cal P}\sim {\cal A}+{\cal B} +{\cal C}$ is impossible, where ${\cal A,B,C}$ are sets of positive integers with at least two elements each (see \cite{Elsholtz:2001mathematika}, \cite{Elsholtz:2002}). It is also known that if ${\cal P}\sim {\cal A}+{\cal B}$, then 
$\max ({\cal A}(x), {\cal B}(x)) \ll x^{1/2} (\log x)^2$ (see
\cite{Elsholtz:2006}) and
$\frac{x}{\log x}\ll {\cal A}(x) {\cal B}(x) \ll x$ (see 
\cite{Wirsing:1972}, \cite{PomeranceandSarkozyandStewart:1988}, and a survey \cite{Elsholtz:2006}). 
Here $\mathcal{A}(x) := \#\{n \leq x : n \in \mathcal{A}\}$
is the counting function, similarly for the counting function $\mathcal{B}(x)$.

Some other related problems have been studied. For example, it is known that the set of shifted primes ${\cal P}+c$ cannot be
asymptotically decomposed into a product set ${\cal A}{\cal B}$, see 
\cite{Elsholtz:2008}.

Other sets of integers have been investigated for decomposability but Erd\H{o}s and Nathanson \cite{ErdosandNathanson:1978} write ``very little is understood about the general problem of the decomposition of sets in the form $S=A+B$, where $|A|\geq 2$ and $|B|\geq 2$.'' For some general results on additive decomposability of integers see S\'{a}rk\"{o}zy \cite{Sarkozy:1962, Sarkozy:1964}. The set of integer squares is irreducible, simply because the gaps between squares are increasing, but Erd\H{o}s conjectured that the set of squares $\{n^2\leq N^2\}$ remains irreducible even if one is allowed to change $o(N)$ of the elements. Partial results are due to S\'ark\"{o}zy and Szemer\'edi \cite{SarkozyandSzemeredi:1965}.

We also mention that various work has been done recently on finite field
versions of the above questions. To give a few examples, S\'{a}rk\"{o}zy 
\cite{Sarkozy:2012} investigated the decomposability of the set of squares
modulo primes, and Dartyge and S\'{a}rk\"{o}zy \cite{DartygeandSarkozy:2013}
investigated the set of primitive roots modulo primes. 
Extensions of their results in various directions are due
to Shparlinski \cite{Shparlinski}, Shkredov \cite{Shkredov}, and
Gyarmati, Konyagin and S\'{a}rk\"{o}zy \cite{GyarmatiKonyaginandSarkozy}. One also has finite field analogues of Wirsing's result \cite{Wirsing:1953} mentioned above, for example Alon, Granville and Ubis \cite{AlonGranvilleandUbis:2010} proved that the number of sets ${\cal S}\subset\Z/p\Z$ that can be written as a sumset ${\cal S}={\cal A}+{\cal B}$ is small, namely $2^{p/2+o(p)}$.

\vspace{12pt}
Following the solution \cite{Elsholtz:2001mathematika}
of the ternary version of Ostmann's problem, mentioned
above, S\'{a}rk\"{o}zy \cite{Sarkozy:2001} posed the binary and ternary additive decomposability of the smooth numbers 
as an open problem.
A transfer of the methods from \cite{Elsholtz:2001mathematika} did not appear
to work.

\begin{definition}
A number is said to be {\em $y$-smooth} if all of its prime factors are at most $y$. We will write $\mathcal{S}_{y}$ for the set of all $y$-smooth numbers. More generally, if $f(n)$ is a function, then we will write $\mathcal{S}_{f(n)}$ for the set of all numbers $n$ that are $f(n)$-smooth.
\end{definition}

\begin{conjecture}[S\'{a}rk\"{o}zy \cite{Sarkozy:2001}]
Let $0 < \epsilon < 1$ be fixed. Then the set $\mathcal{S}_{n^{\epsilon}}$, of integers $n$ with no prime factor greater than $n^{\epsilon}$, is asymptotically additively irreducible.
\end{conjecture}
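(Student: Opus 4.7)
Suppose for contradiction that $\mathcal{S}_{n^{\epsilon}} \sim \mathcal{A} + \mathcal{B}$ with $|\mathcal{A}|,|\mathcal{B}| \geq 2$. The first step is to extract the sieve condition built into smoothness: any $n \in \mathcal{S}_{n^{\epsilon}} \cap [x/2, x]$ has all prime factors at most $n^{\epsilon} \leq x^{\epsilon}$, so for every prime $p \in (x^{\epsilon}, x]$ no sum $a+b \in [x/2, x]$ with $a \in \mathcal{A}$, $b \in \mathcal{B}$ is divisible by $p$. Equivalently, after localising $\mathcal{A}$ and $\mathcal{B}$ to suitable dyadic windows, $\mathcal{A} \bmod p$ and $-\mathcal{B} \bmod p$ are disjoint subsets of $\Z/p\Z$; in particular $|\mathcal{A} \bmod p| + |\mathcal{B} \bmod p| \leq p$ for every such $p$.

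Given this constraint I would apply Theorem~\ref{genthm} to get the shape bound $|\mathcal{A}(x)|, |\mathcal{B}(x)| = x^{1/2 + o(1)}$. The mechanism inside the theorem, as sketched in the introduction, splits the sieving primes $p \sim x^{1/2}$ into those where $|\mathcal{A} \bmod p| \leq p/2$ and those where $|\mathcal{B} \bmod p| \leq p/2$, applies Gallagher's larger sieve on the thin side and the large sieve on the other, and uses Selberg's sieve together with mean-value estimates for non-negative multiplicative functions to average over the rather thin and irregular family of sieving primes that smooth numbers provide.

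The hard part is turning this shape bound into an actual contradiction. Since $|\mathcal{S}_{n^{\epsilon}}(x)| \gg x$ (by Dickman's function $\rho$ applied to $1/\epsilon$), the elementary inequality $|\mathcal{A}(x)| \cdot |\mathcal{B}(x)| \geq |(\mathcal{A}+\mathcal{B}) \cap [x_{0}, x]| \gg x$ is compatible with $|\mathcal{A}|, |\mathcal{B}| \asymp \sqrt{x}$, so Theorem~\ref{genthm} does not by itself rule out decomposability. To refute it one must sharpen the sieve output to $|\mathcal{A}(x)| \cdot |\mathcal{B}(x)| = o(x)$; a natural target is a refined Selberg sieve yielding a Mertens-type gain $\prod_{p \in (x^{\epsilon}, x^{1/2}]}(1 - \omega_{p}/p) = o(1)$, with $\omega_{p}$ the number of residues forbidden to $\mathcal{A}$ (or $\mathcal{B}$). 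The main obstacle I anticipate is showing that $|\mathcal{A} \bmod p|$ and $|\mathcal{B} \bmod p|$ are each at least a fixed fraction of $p$ on a positive proportion of sieving primes, which seems to require a Freiman-type analysis of the near-extremal condition $\mathcal{A} \bmod p + \mathcal{B} \bmod p \subseteq \Z/p\Z \setminus \{0\}$, combined with an equidistribution input for smooth numbers in residue classes modulo primes $p \sim x^{1/2}$.
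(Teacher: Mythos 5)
Your proposal is not a proof, and it is worth being clear about why: the paper itself does not prove Conjecture~2.3. It proves a counting bound (Theorem~\ref{smooth1}: $\max(\mathcal{A}(x),\mathcal{B}(x))\ll\sqrt{x}\log^4 x$) and then, via Ruzsa's sumset inequality (Lemma~\ref{lem:Ruzsa}), deduces only the \emph{ternary} indecomposability of $\mathcal{S}_{f(n)}$ (Corollary~\ref{smooth2}). The introduction states this explicitly: ``the ternary version of the above mentioned conjecture of S\'ark\"ozy will be solved for all small $\epsilon$.'' The binary case remains open. You diagnose the obstruction correctly: for $f(n)=n^\epsilon$ one has $\mathcal{S}_{n^\epsilon}(x)\asymp_\epsilon x$, so the shape constraint $\mathcal{A}(x)\mathcal{B}(x)\gg_\epsilon x$ together with $\max(\mathcal{A}(x),\mathcal{B}(x))\ll\sqrt{x}\log^4 x$ is internally consistent, and Theorem~\ref{genthm} on its own cannot force a contradiction. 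Recognising this is good; but it means your ``proof'' stops exactly where the genuine difficulty starts.

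Your proposed continuation --- sharpen the sieve output to $\mathcal{A}(x)\mathcal{B}(x)=o(x)$ by showing that $|\mathcal{A}\bmod p|$ and $|\mathcal{B}\bmod p|$ are both bounded away from $0$ and $p$ on a positive proportion of sieving primes, via some Freiman-type analysis of the near-extremal condition $(\mathcal{A}\bmod p)+(\mathcal{B}\bmod p)\subseteq(\Z/p\Z)\setminus\{0\}$ --- is a reasonable heuristic, but it is exactly the kind of structural input the paper acknowledges to be currently unavailable. Section~\ref{sec:oststatement} makes this point for the primes: the putative summands must each occupy about $p/2$ residue classes for many $p$, suggesting they should look like quadratic-polynomial images, ``but currently this heuristic cannot be substantiated rigorously.'' Nothing in your sketch supplies the missing rigidity argument or the equidistribution input you invoke, and no bound of the form $\mathcal{A}(x)\mathcal{B}(x)=o(x)$ is actually derived. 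Finally, note that even the counting bound of Theorem~\ref{smooth1} is proved only for $f(n)\le n^\kappa$ with $\kappa$ a small absolute constant, so it does not cover the full range $0<\epsilon<1$ in the conjecture; your write-up does not address this restriction either. In short: your analysis of what Theorem~\ref{genthm} gives and does not give is accurate and matches the paper, but the conjecture is genuinely unproved both in the paper and in your proposal.
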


\section{Announcement of results}{\label{sec:announcement}}
\subsection{Sets of smooth numbers}
An argument along the lines of \cite{Elsholtz:2001mathematika} does not seem to work unless the size $\mathcal{S}(x)$ of the target set can be well estimated by a sieving procedure. This excludes some natural sets, such as sets $\mathcal{S}_{f(n)}$ of smooth numbers, from consideration. In this paper we will prove an additive irreducibility theorem for sets that need not be well controlled by the sieve in this sense, but have certain other regularity properties. We defer a general statement, which is the main result of this paper but is a bit technical, to Theorem \ref{genthm}, but as a special case we achieve the following results on smooth numbers. In particular, the ternary version of the above mentioned conjecture of S\'{a}rk\"{o}zy will be solved for all small $\epsilon$. 

\begin{theorem}\label{smooth1}
There exist a large absolute constant $D > 0$, and a small absolute constant
$\kappa > 0$, such that the following is true. Suppose $f(n)$ is an increasing
function such that $\log^{D}n \leq f(n) \leq n^{\kappa}$ for large $n$, and such that $f(2n) \leq f(n)(1+(100\log f(n))/\log n)$. Then if ${\cal
  A}+{\cal B}\sim {\cal S}_{f(n)}$, where ${\cal A}$ and ${\cal B}$ 
contain at least two elements each, we have
$$ \max(\mathcal{A}(x), \mathcal{B}(x)) \ll \sqrt{x}\log^{4}x . $$ 
\end{theorem}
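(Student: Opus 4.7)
The plan is to translate the sumset hypothesis into a residue-avoidance condition modulo primes and feed this into a combined sieve argument. Since no clean asymptotic is available for $\mathcal{S}_{f(n)}(x)$, the usual density input will have to be replaced by Selberg-sieve upper bounds. The first step is the local input: if $\mathcal{A}+\mathcal{B}$ agrees with $\mathcal{S}_{f(n)}$ beyond some $x_0$, and $p$ is a prime exceeding $f(2x)$, then for every $a\in\mathcal{A}$ and $b\in\mathcal{B}$ with $a+b\in[x_0,2x]$ the smoothness of $a+b$ forces $p\nmid a+b$, i.e.\ $b\not\equiv -a\pmod{p}$. Writing $\mathcal{A}_p,\mathcal{B}_p$ for the reductions of $\mathcal{A}\cap[1,x]$ and $\mathcal{B}\cap[1,x]$ modulo $p$, this says $0\notin\mathcal{A}_p+\mathcal{B}_p$; Cauchy--Davenport then gives $|\mathcal{A}_p|+|\mathcal{B}_p|\le p$, and in particular $\mathcal{B}$ avoids at least $|\mathcal{A}_p|$ residue classes modulo $p$ (and vice versa).

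Next I would apply Montgomery's large sieve to $\mathcal{B}$, sieving by primes in $(f(2x),Q]$ with $Q\asymp\sqrt{x}$, to obtain
\[ B(x) \;\ll\; \frac{x}{\displaystyle\sum_{\substack{q\le Q \\ q\text{ sqfree}}}\prod_{p\mid q}\frac{|\mathcal{A}_p|}{p-|\mathcal{A}_p|}}, \]
and symmetrically for $A(x)$ with $\mathcal{A},\mathcal{B}$ swapped. A dichotomy then handles the unknown sizes $|\mathcal{A}_p|$. If the primes where $|\mathcal{A}_p|$ is anomalously smaller than $\min(A(x),p)$ are sparse, the denominator above is bounded below by a sum of a nonnegative multiplicative function, and the two large-sieve inequalities together force $\max(A(x),B(x))\ll\sqrt{x}\log^{4}x$ after balancing $Q$. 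In the opposite regime, where $\mathcal{A}$ is concentrated in few residues modulo many primes, Gallagher's larger sieve
\[ A(x) \;\le\; \frac{\sum_{p}\log p \;-\; \log x}{\sum_{p}(\log p)/|\mathcal{A}_p| \;-\; \log x} \]
yields the required bound on $A(x)$ directly.

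The main obstacle, and precisely where Selberg's sieve enters, is producing this distributional information on $|\mathcal{A}_p|$ without any density formula for $\mathcal{S}_{f(n)}$ to lean on. The plan is to exploit that for any fixed $b_0\in\mathcal{B}$ the shifted set $\mathcal{A}+b_0$ is contained in $\mathcal{S}_{f(n)}$, so bounding $\#\{a\in\mathcal{A}\cap(x/2,x]: a\equiv r\pmod{p}\}$ becomes an upper-bound sieve problem of level $p$, for which Selberg's $\Lambda^{2}$ construction supplies a clean majorant without any accompanying asymptotic. Packaging many such bounds via the general multiplicative-function sum estimates advertised in the introduction -- the dyadic regularity $f(2n)\le f(n)(1+100\log f(n)/\log n)$ ensures that the smoothness threshold is essentially constant on $[x,2x]$, so the sieve runs uniformly on that interval -- produces the effective denominator needed in the large-sieve bound. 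The four logarithmic factors in $\sqrt{x}\log^{4}x$ will accumulate from the large-sieve error term $Q^{2}/x$, the multiplicative-function sum, and the balance between the two dichotomy regimes, with the hypotheses $\log^{D}n\le f(n)\le n^{\kappa}$ providing a prime window of adequate size and keeping all of these losses under control.
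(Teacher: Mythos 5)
Your overall toolbox (residue-avoidance modulo primes $p > f(2x)$, Montgomery's large sieve, Gallagher's larger sieve, a dichotomy, and Selberg's sieve) matches the paper's, but the way you organise and deploy these tools has a real gap, precisely in the case that was the obstacle S\'{a}rk\"ozy identified.

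The crux of the problem is the case when $k := \#\mathcal{A}$ is small, say $2 \leq k \lesssim \log x$. Your dichotomy is on the sizes of the local reductions $|\mathcal{A}_p|$, and in the ``concentrated'' regime you fall back on Gallagher's larger sieve to bound $A(x)$. But that gives nothing when $A(x)=k$ is already tiny, and in the ``spread out'' regime your large-sieve bound on $B(x)$ reads roughly $B(x) \ll x \prod_{p}(1-\nu(p)/p)$ over sieving primes $p>f(2x)$ with $\nu(p)\le k$. Since the sieving primes start at $y=f(x) \gg \log^{D}x$ (or as large as $x^{\kappa}$), this product only saves a factor of size $(\log\sqrt{x}/\log y)^{k}$, which is at most $(\text{const})^{k}$ in the range $y = x^{\kappa}$. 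To contradict $B(x) \geq \Psi(x,y)/k$ you would need $(\text{const})^{k} \gg x/\Psi(x,y)$, which can be a power of $x$: that fails for all $k \ll \log x$. The large sieve on $[1,x]$ simply cannot close the small-$k$ case, because the ``trivial'' cardinality $x$ is so much larger than $\Psi(x,y)$.

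The paper's essential new idea, which your proposal does not reproduce, is to run the upper-bound sieve \emph{inside the smooth numbers}: in the small-$k$ range one applies Selberg's sieve (Lemma~\ref{lem:Selberg}) with $\mathcal{C}=\mathcal{S}_{y}\cap[1,x]$, so the main term is already of size $\Psi(x,y)/(\text{sieve denominator})$ rather than $x/(\text{sieve denominator})$, and a genuine contradiction with $\#\mathcal{B}\geq\Psi(x,y)/k$ follows. The price is an error term involving $\left|\Psi(x,y;a,d) - \Psi_{d}(x,y)/\phi(d)\right|$ summed over $d$ up to $Q^{2}$, and this is controlled by a Bombieri--Vinogradov theorem for smooth numbers (Theorem~\ref{smoothprop2}); the hypothesis $\log^{D}n \leq f(n) \leq n^{\kappa}$ is what makes both the level-of-distribution input and the bound $\tau_{3}(d)\le 3^{1/\kappa}$ on the divisor weights effective. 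Your proposed use of Selberg's sieve --- bounding $\#\{a\in\mathcal{A}: a\equiv r\bmod p\}$ via the inclusion $\mathcal{A}+b_{0}\subset\mathcal{S}_{f(n)}$ --- gives at best an upper bound on concentration of $\mathcal{A}$, which cannot produce a \emph{lower} bound on $|\mathcal{A}_{p}|$ that improves on the trivial $|\mathcal{A}_{p}|\geq 1$ when $k$ is small, and in any case does not change the fact that you are sieving $\mathcal{B}$ out of $[1,x]$ rather than out of $\mathcal{S}_{y}$. The remaining structure in your plan (large-$k$ regime via the inverse-sieve combination of Montgomery and Gallagher, giving $\max(A(x),B(x))\ll\sqrt{x}\log^{4}x$) is correct and does agree with the paper's Propositions~\ref{middlek} and~\ref{middlek2}, but without the Selberg-inside-$\mathcal{S}$ step the argument cannot get started.
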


\begin{corollary}\label{smooth2}
If $f(n)$ is as in Theorem \ref{smooth1},
 then a ternary decomposition ${\cal A}+{\cal B}+{\cal C}\sim {\cal S}_{f(n)}$, 
where ${\cal A,B}$ and ${\cal C}$ contain at least two
elements each, does not exist.
\end{corollary}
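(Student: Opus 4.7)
The plan is to reduce the ternary statement to three applications of the binary Theorem~\ref{smooth1} and then invoke a sumset inequality of Ruzsa~\cite{Ruzsa:2009} to force a contradiction, exactly the strategy announced in the introduction. Suppose for contradiction that $\mathcal{A}+\mathcal{B}+\mathcal{C}\sim\mathcal{S}_{f(n)}$ with each of $\mathcal{A},\mathcal{B},\mathcal{C}$ containing at least two elements. By regrouping, $\mathcal{S}_{f(n)}$ is also asymptotically equal to each of the three binary sumsets $\mathcal{A}+(\mathcal{B}+\mathcal{C})$, $\mathcal{B}+(\mathcal{A}+\mathcal{C})$, and $(\mathcal{A}+\mathcal{B})+\mathcal{C}$, and each pairwise sum $\mathcal{A}+\mathcal{B}$, $\mathcal{A}+\mathcal{C}$, $\mathcal{B}+\mathcal{C}$ still contains at least two elements. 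So three applications of Theorem~\ref{smooth1} yield
\[
\max\bigl(\mathcal{A}(x),\mathcal{B}(x),\mathcal{C}(x),(\mathcal{A}+\mathcal{B})(x),(\mathcal{A}+\mathcal{C})(x),(\mathcal{B}+\mathcal{C})(x)\bigr) \ll \sqrt{x}\log^{4}x.
\]

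Next I would pin down lower bounds on $\mathcal{A}(x),\mathcal{B}(x),\mathcal{C}(x)$. Counting pairs whose sum lies in $\mathcal{S}_{f(n)}\cap[x_{0},x]$ (both summands are then automatically $\leq x$) gives $\mathcal{A}(x)\cdot(\mathcal{B}+\mathcal{C})(x)\geq\mathcal{S}_{f(n)}(x)-O(1)$, and analogously for the other regroupings. Combined with the upper bounds just obtained, this forces $\min(\mathcal{A}(x),\mathcal{B}(x),\mathcal{C}(x)) \gg \mathcal{S}_{f(n)}(x)/(\sqrt{x}\log^{4}x)$. A Dickman--de Bruijn estimate for $\Psi(x,y)$, applied after restricting to $n\in[x/2,x]$ so that $f(n)\geq\log^{D}(x/2)$ by the monotonicity of $f$, yields $\mathcal{S}_{f(n)}(x)\gg x^{1-1/D+o(1)}$, and hence each of the three counting functions is $\gg x^{1/2-1/D-o(1)}$.

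Finally I would apply Ruzsa's sumset inequality~\cite{Ruzsa:2009} in the form $|X+Y+Z|\cdot|Y|\leq|X+Y|\cdot|Y+Z|$, taken with $X=\mathcal{A}\cap[1,x]$, $Y=\mathcal{B}\cap[1,x]$, $Z=\mathcal{C}\cap[1,x]$. Any element of $\mathcal{A}+\mathcal{B}+\mathcal{C}$ lying in $[1,x]$ has all three summands in $[1,x]$, so $X+Y+Z$ contains $\mathcal{S}_{f(n)}\cap[x_{0},x]$ and
\[
\mathcal{S}_{f(n)}(x)-O(1)\;\leq\;|X+Y+Z|\;\leq\;\frac{(\sqrt{x}\log^{4}x)^{2}}{x^{1/2-1/D-o(1)}}\;\ll\;x^{1/2+1/D+o(1)}.
\]
Comparing with the lower bound $\mathcal{S}_{f(n)}(x)\gg x^{1-1/D+o(1)}$ from the previous step contradicts this for all sufficiently large $x$, provided $1-1/D>1/2+1/D$, i.e.~$D>4$; this is compatible with the hypothesis that $D$ be a large absolute constant.

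The main obstacle I anticipate is the lower bound on $\mathcal{S}_{f(n)}(x)$: although the asymptotic $\Psi(x,y)\sim x\rho(\log x/\log y)$ is classical in a wide range, its transfer to a smoothness threshold that depends on $n$ requires the dyadic trick above together with the stated monotonicity of $f$, and one must check that the resulting exponent $1-1/D$ genuinely exceeds $1/2+1/D$ throughout the allowed range $\log^{D}n\leq f(n)\leq n^{\kappa}$. The remaining ingredients---the regrouping argument, the invocation of Theorem~\ref{smooth1}, and the Pl\"unnecke--Ruzsa-type inequality---are essentially mechanical once this lower bound is in hand.
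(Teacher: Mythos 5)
Your overall strategy --- regroup the ternary sumset into three binary ones, apply Theorem~\ref{smooth1} to each, then combine via a Ruzsa sumset inequality and compare with a Dickman-type lower bound for $\mathcal{S}_{f(n)}(x)$ --- is exactly the paper's route, and your numerical endgame ($D>4$ suffices) matches the paper's. However, the specific sumset inequality you invoke is a genuine concern. You write the bound in the asymmetric ``triangle'' form
$$|X+Y+Z|\cdot|Y|\;\leq\;|X+Y|\cdot|Y+Z|,$$
but the result the paper cites from Ruzsa~\cite{Ruzsa:2009} (Lemma~\ref{lem:Ruzsa}, i.e.\ Theorem~9.1 there) is the symmetric inequality $|X+Y+Z|^{2}\leq|X+Y|\,|X+Z|\,|Y+Z|$. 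The asymmetric form is the sumset analogue of Ruzsa's triangle inequality, but for \emph{sums} (as opposed to differences) the Pl\"unnecke--Ruzsa machinery only delivers this after passing to a nonempty subset $Y'\subseteq Y$: one gets $|X+Y'+Z|\leq|X+Y|\,|Y+Z|/|Y|$, and since $Y'\subseteq Y$ this bounds the \emph{wrong} quantity. Unless you can point to a proof of the non-subset version, this step is a gap. Multiplying the three putative triangle inequalities over cyclic permutations of $X,Y,Z$ and using $|X+Y+Z|\leq|X||Y||Z|$ would in fact recover the symmetric form, so the asymmetric one would be strictly stronger than the theorem Ruzsa actually proves --- another sign it is not available off the shelf.

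The fix is simple and also streamlines the argument: use Lemma~\ref{lem:Ruzsa} directly. Since $\mathcal{S}_{f(n)}\sim(\mathcal{A}+\mathcal{B})+\mathcal{C}$ and its two regroupings, three applications of Theorem~\ref{smooth1} give $\#(\mathcal{A}+\mathcal{B}),\#(\mathcal{A}+\mathcal{C}),\#(\mathcal{B}+\mathcal{C})\ll\sqrt{x}\log^{4}x$, and then
$$\mathcal{S}_{f(n)}(x)^{2}\;\ll\;\#(\mathcal{A}+\mathcal{B})\,\#(\mathcal{A}+\mathcal{C})\,\#(\mathcal{B}+\mathcal{C})\;\ll\;x^{3/2}\log^{12}x,$$
which contradicts $\mathcal{S}_{f(n)}(x)\gg x^{1-1/D+o(1)}$ once $D>4$. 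This way the intermediate lower bounds on the individual counting functions $\mathcal{A}(x),\mathcal{B}(x),\mathcal{C}(x)$ are unnecessary, and you avoid relying on an unestablished inequality.
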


In particular, one can take $f(n)=n^{\epsilon}$ in Corollary \ref{smooth2} for any fixed $0 < \epsilon \leq \kappa$.

\vspace{12pt}
Let us explain briefly how we are able to handle the smooth numbers case, referring the reader to section \ref{sec:genmachinery} for fuller details. The proof 
strategy of \cite{Elsholtz:2001mathematika}, where the primes were handled, can be summarized as follows:
suppose that ${\cal A}+{\cal B}\sim {\cal S}$, where the set ${\cal S}$ shall
be investigated. Since ${\cal S}$ has restricted prime factors, a sieve argument shows that if ${\cal A}(x)$ is a certain size
then ${\cal B}(x)$ is rather less than $\frac{{\cal S}(x) }{{ \cal A}(x)}$, which is a
contradiction since we certainly have
$$ {\cal S}(x) + O(1) = ({\cal A}+{\cal B})(x) \leq \mathcal{A}(x) \mathcal{B}(x) $$
if ${\cal A}+{\cal B}\sim {\cal S}$. Studying different ranges of ${\cal A}(x)$ this eventually leads
to good lower and upper bounds on ${\cal A}(x)$ and ${\cal B}(x)$. Then one can
deduce a ternary indecomposability result, which in this paper will be done using general sumset inequalities.

If ${\cal S}$ is a set of smooth numbers,
and if one tries a direct application of the sieve methods applied to the set of primes, then 
one gets stuck right at the beginning when trying to find basic upper bounds
for $k$-tuples of smooth numbers. For with $u=\frac{\log x}{\log y}$, and $\varrho$ denoting the Dickman function, it is known that ${\cal S}_y(x) \sim \varrho(u) x \approx
\frac{x}{u^u}$ on a wide range. If $a_1, \dotsc, a_k$ are distinct integers
one would therefore expect that 
\[ \#\{n \leq x: n+a_1, \dotsc , n+a_k \in {\cal S}_y \} \approx 
\varrho(u)^k x \approx \frac{x}{u^{uk}}.\]
By a usual application of sieve methods (sieving the integers $n \leq x$ by primes $y < p \leq \sqrt{x}$, say) one would rather prove something like
\[ \#\{n \leq x: n+a_1, \dotsc , n+a_k \in {\cal S}_y\} \ll_k \frac{x}{u^{k}}.\]
If $k<u$ this estimate is even worse than the ``trivial'' bound ${\cal S}_y(x)$ for the left hand side, and so the method outlined above cannot proceed. Note that in the primes case one does obtain correct order upper bounds for prime $k$-tuples.

In contrast to this, we will use Selberg's sieve and some other ingredients (see the proof of Proposition \ref{smallkbv}, below) to show something like
\[ \#\{n \leq x: n+a_1, \dotsc , n+a_k \in {\cal S}_y \} \ll_k
 \frac{x}{u^{u+(k-1)}},\]
and will show that this is sufficient to make our proof run.

\subsection{Sets consisting of prime factors from a certain set}
As a second example we study semigroup-type sets ${\cal Q}({\cal T})$ of integers composed of certain prime factors $\mathcal{T}$ only.
\begin{theorem}
Let ${\cal T}$ be any finite set of primes, and let 
\[{\cal Q}({\cal T})=\{n \in \N: p|n 
\Rightarrow p \in {\cal T}\}.\]
Then ${\cal Q}({\cal T})$ does not have an asymptotic additive decomposition
into two sets. 
\end{theorem}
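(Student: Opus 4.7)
The plan is to combine the very sparse growth of $\mathcal{Q}(\mathcal{T})$ with a classical finiteness theorem of Mahler on $S$-integers differing by a fixed amount. The key observation is that any binary decomposition of $\mathcal{Q}(\mathcal{T})$ forces infinitely many pairs of elements of $\mathcal{Q}(\mathcal{T})$ at a fixed distance, which is impossible.

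First I would argue as follows. Since $\mathcal{Q}(\mathcal{T})$ is infinite, any asymptotic decomposition $\mathcal{A}+\mathcal{B}\sim \mathcal{Q}(\mathcal{T})$ must have the sumset infinite, so at least one of $\mathcal{A},\mathcal{B}$ is infinite; without loss of generality $\mathcal{B}$ is infinite. Since $\mathcal{A}$ has at least two elements, choose $a_1<a_2$ in $\mathcal{A}$ and set $d=a_2-a_1>0$. For every $b\in\mathcal{B}$ with $b\geq x_0-a_1$, both $a_1+b$ and $a_2+b$ lie in $(\mathcal{A}+\mathcal{B})\cap[x_0,\infty)=\mathcal{Q}(\mathcal{T})\cap[x_0,\infty)$, so both are composed only of primes from $\mathcal{T}$. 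Thus each such $b$ produces a distinct solution $(x,y)=(a_2+b,\,a_1+b)$ of the equation
\[
x-y=d, \qquad x,y\in \mathcal{Q}(\mathcal{T}).
\]

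Second, I would invoke Mahler's classical theorem: for a finite set of primes $\mathcal{T}$ and a fixed nonzero integer $d$, the equation $x-y=d$ has only finitely many solutions with $x,y$ supported on the primes of $\mathcal{T}$. (This reduces to the $S$-unit theorem by splitting according to $g=\gcd(x,y)$, which must divide $d$ and hence takes only finitely many values; for each $g$ the coprime equation $x/g-y/g=d/g$ has finitely many solutions in $S$-units.) Since $\mathcal{B}$ is infinite, we would obtain infinitely many distinct values of $b$, hence infinitely many distinct pairs $(a_2+b,a_1+b)$, contradicting Mahler's theorem.

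There is essentially no obstacle beyond correctly citing Mahler's result; the argument does not even require the sieve machinery developed in the rest of the paper, because $\mathcal{Q}(\mathcal{T})$ is so sparse (its counting function grows only like $(\log x)^{|\mathcal{T}|}$) that purely Diophantine information suffices. One should simply remark that the finiteness of the $\mathcal{T}$-integer solutions to $x-y=d$ is a classical consequence of Thue--Mahler, so the result follows immediately; no separate analysis of the counting functions $\mathcal{A}(x)$, $\mathcal{B}(x)$ is needed.
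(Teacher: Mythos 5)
Your proof is correct and takes essentially the same route as the paper: both contradict an asymptotic binary decomposition by observing that the fixed difference $a_2-a_1$ would have to occur infinitely often among pairs of elements of $\mathcal{Q}(\mathcal{T})$, which is impossible. The paper cites Tijdeman's gap theorem, noting explicitly that it only needs the weaker statement $\liminf_{n\to\infty}(q_{n+1}-q_n)=\infty$ from the Thue--Siegel--Mahler line, whereas you invoke the $S$-unit (Mahler) finiteness directly; for the purposes of this argument the two are equivalent.
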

\begin{proof}
Improving results of Thue, Siegel, Mahler and making use of Baker's theory on linear forms in logarithms,
 Tijdeman \cite[page 319]{Tijdeman:1973} proved
that the sequence $(q_n)$ of those integers composed 
of finitely many given primes has very large gaps.
Here we only make use of the weaker statement that $\liminf_{n \rightarrow \infty} (q_{n+1}-q_n) =\infty$.
On the other hand, if ${\cal A}+{\cal B}\sim {\cal Q}({\cal T})$ and
$a_1,a_2 \in {\cal A}$, and ${\cal B}$ is infinite, then $a_2-a_1$ must be a 
(bounded) gap of infinitely many pairs of elements of
 ${\cal Q}({\cal T})$. This is a contradiction.
\end{proof}

From Tijdeman \cite[Theorem 7]{Tijdeman:1973} it similarly follows that there exists an infinite set ${\cal T}$ with the same conclusion.

Using the sieve-based methods of this paper, rather than gap considerations, we shall prove some related results.

\begin{theorem}{\label{thm:tauupperbound}}
Let ${\cal T}$ denote any set of primes with 
\[  \sum_{\begin{subarray}{c}
 p \leq x\\
p \in {\cal T}
\end{subarray}}
\frac{\log p}{p} =\tau \log x  + C+o(1). \]
Here $0< \tau < 1$ and $C$ denote any real constants.
Let 
\[{\cal Q}({\cal T})=\{n \in \N: p|n 
\Rightarrow p \in {\cal T}\}.\]
If ${\cal Q}({\cal T}) \sim {\cal A}+{\cal B}$, where ${\cal A},{\cal B}$
contain at least two elements each, then
\[\max ({\cal A}(x), {\cal B}(x)) \ll_{\cal{T}} x^{1/2}(\log x)^{4}.\]
\end{theorem}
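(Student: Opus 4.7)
The plan is to follow the sieve-based template developed in \cite{Elsholtz:2001mathematika, Elsholtz:2006} for the set of primes, with the primes outside $\mathcal{T}$ (of positive logarithmic density $1-\tau$) now playing the role of the sifting primes. Two preliminary facts are needed. First, by partial summation from the hypothesis,
$$\sum_{\substack{p \leq Q \\ p \notin \mathcal{T}}} \frac{1}{p} = (1-\tau) \log\log Q + O_{\mathcal{T}}(1).$$
Second, by a standard mean-value theorem for non-negative multiplicative functions (Selberg--Delange, or Wirsing) applied to $g = \mathbf{1}_{\mathcal{Q}(\mathcal{T})}$, whose logarithmic prime-power sum matches the hypothesis,
$$\mathcal{Q}(\mathcal{T})(x) \asymp_{\mathcal{T}} \frac{x}{(\log x)^{1-\tau}}.$$
Combined with $\mathcal{A}(x)\mathcal{B}(x) \geq \mathcal{Q}(\mathcal{T})(x) - O(1)$, this already forces $\max(\mathcal{A}(x),\mathcal{B}(x)) \gg_{\mathcal{T}} x^{1/2}/(\log x)^{(1-\tau)/2}$, so the real task is to bound the larger summand from above.

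Suppose without loss of generality that $\mathcal{A}(x) \leq \mathcal{B}(x)$. The key structural input is that for every prime $p \notin \mathcal{T}$ and every pair $a \in \mathcal{A}$, $b \in \mathcal{B}$ with $a+b$ large, $p \nmid a+b$; hence, writing $R_p(X)$ for the reduction of $X$ modulo $p$, the set $\mathcal{A}$ avoids all the $|R_p(\mathcal{B})|$ residues $\{-b \bmod p : b \in \mathcal{B}\}$. Applying Montgomery's large sieve at level $Q = x^{1/2}$ using the primes $p \leq Q$ outside $\mathcal{T}$ then gives
$$\mathcal{A}(x) \;\leq\; \frac{2x}{\displaystyle\sum_{\substack{p\leq Q\\ p\notin \mathcal{T}}} \frac{|R_p(\mathcal{B})|}{p - |R_p(\mathcal{B})|}},$$
and the symmetric inequality with the roles of $\mathcal{A}$ and $\mathcal{B}$ swapped bounds $\mathcal{B}(x)$. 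If one can secure a lower bound of the form $|R_p(X)| \gg \min(X(x), p)$ for sufficiently many such primes $p$, then together with the Mertens-type estimate above this leads, after optimising, to the claimed $\max(\mathcal{A}(x), \mathcal{B}(x)) \ll_{\mathcal{T}} x^{1/2} (\log x)^{4}$.

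The main obstacle is the residue-concentration issue: a priori, $\mathcal{A}$ or $\mathcal{B}$ could occupy very few residue classes modulo some primes, so $|R_p(\cdot)|$ is not directly controlled by the cardinality. Gallagher's larger sieve, which is sensitive precisely to this kind of exceptional concentration, handles those bad primes while the large sieve handles the generic ones; the interleaving of the two, exactly as in the earlier prime-number work \cite{Elsholtz:2001mathematika, Elsholtz:2006}, is what inflates the exponent on the logarithm to $4$ in the final bound. The hypotheses $0 < \tau < 1$ are both essential: $\tau < 1$ guarantees a positive-density sifting set out of the primes, while $\tau > 0$ prevents $\mathcal{Q}(\mathcal{T})(x)$ from being too small to support the matching lower bound on $\max(\mathcal{A}(x),\mathcal{B}(x))$ that drives the argument.
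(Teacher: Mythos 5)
Your proposal identifies the right ingredients (a Wirsing/Selberg--Delange count of $\mathcal{Q}(\mathcal{T})(x)$, the large sieve using the sifting primes $\mathcal{P}\setminus\mathcal{T}$ of logarithmic density $1-\tau$, and Gallagher's larger sieve to control the occupied residue classes), and it is essentially the route the paper takes, except that the paper packages the sieving into a general Theorem (Theorem~\ref{genthm}) whose ``Sieve Controls Size'' hypothesis it then verifies for $\mathcal{Q}(\mathcal{T})$. However, as written the sketch has a genuine gap: the displayed large sieve inequality keeps only the \emph{prime} terms of Montgomery's denominator $L=\sum_{q\le Q}\mu^2(q)\prod_{p\mid q}\omega(p)/(p-\omega(p))$, and this truncated form is too weak precisely in the delicate regime where $\#\mathcal{A}=k$ is small.

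To see the failure concretely, suppose WLOG $\mathcal{A}(x)\le\mathcal{B}(x)$ and $k=\#\mathcal{A}$ is bounded (say $k=2$). Then $|R_p(\mathcal{A})|\le k$ for every $p$, so your denominator for bounding $\mathcal{B}(x)$ is at most
$$\sum_{\substack{p\le Q\\ p\notin\mathcal{T}}}\frac{k}{p-k}\ \approx\ k(1-\tau)\log\log Q,$$
which yields only $\mathcal{B}(x)\ll x/(k\log\log x)$. That does \emph{not} contradict the lower bound $\mathcal{B}(x)\ge\mathcal{Q}(\mathcal{T})(x)/k\asymp x/(k(\log x)^{1-\tau})$, since $\log\log x\ll(\log x)^{1-\tau}$. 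So the argument does not close. One must use the full squarefree-modulus sum in $L$, whose multiplicativity accumulates a factor roughly $(\log x)^{2(1-\tau)}$ from the many moduli with all prime factors in $\mathcal{P}\setminus\mathcal{T}$, and pair this with Gallagher's larger sieve to show that $\nu_{\mathcal{A}}(p)\approx k$ on average (the ``inverse sieve'', Lemma~\ref{InverseSieve}), so that the factor $k$ can be pulled out of the product. This is exactly what Proposition~\ref{smallkscs} does; the paper's remark after the propositions explains why small $k$ cannot be dismissed lightly (witness the conjectural decomposition $\{0,2\}+\{\text{lesser twin primes}\}$ of the twin primes). For moderate $k$, the paper similarly keeps products of \emph{two} primes in $L$ (Lemma~\ref{middlek}), which gives the needed $k^{-2}$ decay --- again strictly more than single-prime terms can supply. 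Your phrase ``after optimising'' is concealing the step that actually makes the proof work; once you insert the full squarefree sum and the inverse-sieve averaging, the argument is sound and recovers the exponent $4$ on the logarithm.
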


This theorem includes the case where ${\cal T}$ consists of the primes in one or several arithmetic progressions. 

\begin{corollary}{\label{cor:semigroup}}
Let ${\cal T}$ satisfy the hypotheses of Theorem \ref{thm:tauupperbound}. Then the set
${\cal Q}({\cal T})=\{n \in \N: p|n 
\Rightarrow p \in {\cal T}\}$
cannot be asymptotically decomposed into three sets with at least two elements each. 
\end{corollary}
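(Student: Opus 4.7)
The plan is to reduce the putative ternary decomposition to three binary ones so that Theorem~\ref{thm:tauupperbound} applies, and then combine the resulting counting bounds with a Pl\"unnecke--Ruzsa sumset inequality from Ruzsa~\cite{Ruzsa:2009} to contradict a lower bound on $\mathcal{Q}(\mathcal{T})(x)$.

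First I would assume for contradiction that $\mathcal{A}+\mathcal{B}+\mathcal{C}\sim\mathcal{Q}(\mathcal{T})$ with each summand having at least two elements. Viewing this as $\mathcal{A}+(\mathcal{B}+\mathcal{C})\sim\mathcal{Q}(\mathcal{T})$ and applying Theorem~\ref{thm:tauupperbound} (noting $|\mathcal{B}+\mathcal{C}|\geq 2$) yields $\mathcal{A}(x),\,(\mathcal{B}+\mathcal{C})(x)\ll_{\mathcal{T}} x^{1/2}(\log x)^{4}$. Cycling the roles of $\mathcal{A},\mathcal{B},\mathcal{C}$ gives the same upper bound for the two remaining pairwise sumset counting functions and for $\mathcal{B}(x),\mathcal{C}(x)$. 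Partial summation applied to the hypothesis $\sum_{p\in\mathcal{T},\,p\leq x}(\log p)/p=\tau\log x+C+o(1)$ gives $\pi(x;\mathcal{T})\sim \tau x/\log x$, so $\mathcal{Q}(\mathcal{T})(x)\geq \pi(x;\mathcal{T})\gg x/\log x$. Combined with the trivial inequality $\mathcal{A}(x)\cdot(\mathcal{B}+\mathcal{C})(x)\geq \mathcal{Q}(\mathcal{T})(x)-O(1)$ (and its analogues obtained by cycling), this produces the complementary lower bounds $\mathcal{A}(x),\mathcal{B}(x),\mathcal{C}(x)\gg x^{1/2}/(\log x)^{5}$.

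Next I would invoke the Pl\"unnecke--Ruzsa inequality in its general form~\cite{Ruzsa:2009}: for any finite non-empty sets $A,B_{1},\dots,B_{k}$ in an abelian group,
\[
|B_{1}+\dots+B_{k}|\leq \frac{\prod_{i=1}^{k}|A+B_{i}|}{|A|^{k-1}}.
\]
Taking $A=\mathcal{A}\cap[0,x]$, $B=\mathcal{B}\cap[0,x]$, $C=\mathcal{C}\cap[0,x]$, apply it first with $k=3$, pivot $A$, and $(B_{1},B_{2},B_{3})=(A,B,C)$, and then with $k=2$, pivot $B$, and $(B_{1},B_{2})=(A,A)$, to obtain
\[
|A+B+C|\leq \frac{|2A|\,|A+B|\,|A+C|}{|A|^{2}}, \qquad |2A|\leq \frac{|A+B|^{2}}{|B|}.
\]
Combining these and substituting the upper bounds on the pairwise sumsets and the lower bounds on $|A|,|B|$ yields
\[
|A+B+C|\leq \frac{|A+B|^{3}\,|A+C|}{|A|^{2}\,|B|}\ll \frac{\bigl(x^{1/2}(\log x)^{4}\bigr)^{4}}{\bigl(x^{1/2}/(\log x)^{5}\bigr)^{3}}=x^{1/2}(\log x)^{O(1)},
\]
which contradicts $|A+B+C|\geq \mathcal{Q}(\mathcal{T})(x)-O(1)\gg x/\log x$ for $x$ sufficiently large.

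The hard part will be isolating the correct form of the Pl\"unnecke--Ruzsa inequality from the several closely related versions in the literature; the general statement above, provable, e.g., via Petridis's method, is the one that does the job. The rest---the cyclic regrouping argument and the elementary lower bound on $\mathcal{Q}(\mathcal{T})(x)$ coming from partial summation---is comparatively routine.
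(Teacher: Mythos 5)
Your proof is correct, but it takes a genuinely different route from the paper's. The paper applies the sharper Ruzsa inequality stated as Lemma~\ref{lem:Ruzsa}, namely $|{\cal A+B+C}|^2 \leq |{\cal A+B}|\,|{\cal A+C}|\,|{\cal B+C}|$, which (after using the cyclic regrouping $\mathcal{A}+(\mathcal{B}+\mathcal{C})\sim\mathcal{Q}(\mathcal{T})$, etc.\ exactly as you do) immediately yields $\mathcal{Q}(\mathcal{T})(x)^2 \ll x^{3/2+o(1)}$, contradicting the size of $\mathcal{Q}(\mathcal{T})(x)$. That approach needs only the upper bounds on the three pairwise sumset counting functions. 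You instead invoke the Pl\"unnecke--Ruzsa inequality in the form $|B_1+\dots+B_k|\leq |A|^{1-k}\prod_i|A+B_i|$, which additionally requires lower bounds on $|\mathcal{A}|$ and $|\mathcal{B}|$; you therefore need the extra step of deriving $\mathcal{A}(x),\mathcal{B}(x)\gg x^{1/2}/\log^{5}x$ from the trivial product inequality. This buys you a nominally stronger upper bound $|\mathcal{A}+\mathcal{B}+\mathcal{C}|\ll x^{1/2}(\log x)^{O(1)}$ rather than the paper's $x^{3/4+o(1)}$, but both are far below the target $\gg x/\log x$, so the extra work gains nothing here and the paper's choice of sumset inequality is the more economical one. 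One small imprecision in your write-up: the Mertens-type hypothesis $\sum_{p\leq x,\,p\in\mathcal{T}}(\log p)/p=\tau\log x+C+o(1)$ does not imply the full asymptotic $\pi(x;\mathcal{T})\sim \tau x/\log x$ that you assert (Mertens does not imply PNT); it does, however, give the lower bound $\pi(x;\mathcal{T})\gg x/\log x$ via a dyadic block argument, and since that is all you actually use, the proof survives. Note also that the paper computes $\mathcal{Q}(\mathcal{T})(x)\sim C_{\cal T}x/(\log x)^{1-\tau}$ via Wirsing's theorem, which is a stronger (and, since $\tau>0$, strictly larger) lower bound than the $\gg x/\log x$ you use, though again the weaker bound suffices for the contradiction.
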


It was previously proved in \cite{Elsholtz:2006} that the product
\[ {\cal A}(x) {\cal B}(x) \ll_{\tau, C} x(\log x)^{2\tau}, \]
and that there are various examples, based on properties of sums of two squares (which is a class of examples of some conjectural relevance to Ostmann's problem, see section \ref{sec:oststatement}), where ${\cal A}+{\cal B}$ is a large subset of ${\cal Q}({\cal T})$ and
\[ {\cal A}(x) {\cal B}(x) \gg_{\tau, C} x(\log x)^{-1/2}.\]
Obtaining a bound on $\max({\cal A}(x), {\cal B}(x))$, as we do in this paper as a straightforward corollary of our general Theorem \ref{genthm}, is a stronger kind of information and allows us to deduce Corollary \ref{cor:semigroup}.

\subsection{Ostmann's problem revisited}{\label{sec:oststatement}}
\begin{theorem}\label{Ostmann}
Suppose that ${\cal P}\sim {\cal A}+{\cal B}$, where ${\cal P}$ is the set of all primes and ${\cal A},{\cal B}$
contain at least two elements each. Then the following bounds
hold:
\[ \frac{x^{1/2}}{\log x \log\log x} \ll {\cal A}(x)\ll x^{1/2} \log\log x.\]
The same bounds hold for ${\cal B}(x)$.
\end{theorem}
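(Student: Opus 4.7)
The two bounds are coupled via Wirsing's product estimate $\mathcal{A}(x)\mathcal{B}(x) \gg x/\log x$: the upper bound on $\max(\mathcal{A}(x),\mathcal{B}(x))$ is equivalent to the lower bound on $\min(\mathcal{A}(x),\mathcal{B}(x))$, so I would focus on proving the upper bound $\max(\mathcal{A}(x), \mathcal{B}(x)) \ll \sqrt{x}\log\log x$. The previously known bound $\sqrt{x}(\log x)^2$ of Elsholtz serves as a bootstrap, ensuring $\mathcal{A}(x), \mathcal{B}(x) \gg \sqrt{x}/(\log x)^3$ and so providing ample elements of either set to feed into sieves.

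The structural inputs coming from $\mathcal{A}+\mathcal{B}\sim\mathcal{P}$ are twofold: (i) for each prime $q$, the constraint $q\nmid a+b$ (past a finite threshold) forces $|\mathcal{A}\bmod q| + |\mathcal{B}\bmod q| \leq q$; (ii) for any fixed $k$-tuple $a_1,\ldots,a_k \in \mathcal{A}$, every large $b\in\mathcal{B}$ has $b+a_i$ prime for all $i$, so Selberg's sieve applied to prime $k$-tuples yields
\[
\mathcal{B}(x) \ll 2^k k!\,\frac{x}{(\log x)^k}\,\mathfrak{S}(a_1,\ldots,a_k),
\]
where $\mathfrak{S}$ is the usual singular series. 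Assuming WLOG $\mathcal{A}(x)\leq\mathcal{B}(x)$, input (i) splits the primes $q\leq\sqrt{x}$: each such $q$ makes at least one of $\mathcal{A}$ or $\mathcal{B}$ miss $\geq q/2$ residues, so by pigeonhole at least half the primes restrict the same set. The large sieve applied on that side produces a bound of shape $\sqrt{x}\log x$ for the corresponding set. To sharpen the $\log x$ to $\log\log x$ I would then use input (ii): average the Selberg bound over tuples $(a_1,\ldots,a_k) \in (\mathcal{A}\cap[1,x])^k$ and use the sums-of-multiplicative-functions machinery developed for Theorem \ref{genthm} to control the averaged singular series; optimising $k$ (of order $\log x/\log\log x$) and combining with the large sieve output yields the target $\sqrt{x}\log\log x$. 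The $\log\log x$ emerges concretely from Mertens' sum $\sum_{q\leq\sqrt{x}}1/q \asymp \log\log x$ entering the averaged singular series. By symmetry the same bound holds for $\mathcal{A}(x)$, and the lower bounds then follow from Wirsing.

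The main technical obstacle is the bookkeeping around Selberg's leading constant $2^k k!$ as $k$ grows: the saving $(\log x)^{-k}$ must be matched precisely against both the singular-series average and the large sieve denominator. Without the multiplicative-function estimates of Theorem \ref{genthm} the best achievable bound seems to remain of the form $\sqrt{x}(\log x)^c$ with $c\geq 1$, recovering Elsholtz's previous work rather than the $\log\log x$ refinement sought here.
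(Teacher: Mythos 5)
Your structural framing is right — the two bounds are coupled through $\mathcal{A}(x)\mathcal{B}(x)\gg x/\log x$, one bootstraps from the existing $\sqrt{x}(\log x)^2$ bound, and the congruence constraint $\nu_{\mathcal{A}}(p)+\nu_{\mathcal{B}}(p)\leq p$ is the fundamental input. But the engine you propose to get from $\sqrt{x}(\log x)^{c}$ down to $\sqrt{x}\log\log x$, namely an averaged Selberg prime $k$-tuple estimate, is not what the paper uses and I do not think it can be made to close the gap as described.

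The concrete problem: with $k=\log x/\log\log x$, the ratio $2^k k!/(\log x)^k$ is $\exp\bigl(-(1+o(1))\,\log x\,\log\log\log x/\log\log x\bigr)$, which is $x^{-o(1)}$, not $x^{-1/2}$. To actually push the Selberg $k$-tuple bound down to $\sqrt{x}$ one would need $k\asymp\log x$, but then the $k$ residues $a_1,\ldots,a_k$ cannot remain distinct modulo primes $p\ll k$, and the singular series $\mathfrak{S}(a_1,\ldots,a_k)$ is no longer under control — precisely the bookkeeping you flag as an obstacle, and which is not resolved in the proposal. There is also no mechanism in your outline that exploits the fact that the congruence constraint acts \emph{on both sets simultaneously}: the pigeonhole step discards half the primes and thereby loses the coupling.

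The paper's actual route is quite different and avoids $k$-tuple sieving entirely. Writing $\nu_{\mathcal{A}}(p)=p/2+\epsilon_p$ and hence $\nu_{\mathcal{B}}(p)\leq p/2-\epsilon_p$ for $p\leq Y=\sqrt{x}/\log^{10}x$, one applies Gallagher's larger sieve (Lemma \ref{lem:Gallagherls}) to \emph{both} $\mathcal{A}$ and $\mathcal{B}$. Since neither $\sum_{p\leq Y}\log p/\nu_{\mathcal{A}}(p)$ nor $\sum_{p\leq Y}\log p/\nu_{\mathcal{B}}(p)$ may exceed $\log x+1$ (else one of the sets would have size $\ll Y$, contradicting the bootstrap), and since
\[
\frac{\log p}{p/2+\epsilon_p}+\frac{\log p}{p/2-\epsilon_p}=\frac{4\log p}{p}\Bigl(1+\frac{\epsilon_p^2}{(p/2)^2-\epsilon_p^2}\Bigr),
\]
one gets the key $L^2$ bound $\sum_{p\leq Y}(\log p)\epsilon_p^2/p^3\ll\log\log x$. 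This is fed into Montgomery's large sieve for $\mathcal{B}$ with the multiplicative weight $f(p)=\nu_{\mathcal{A}}(p)/(p-\nu_{\mathcal{A}}(p))$, and Hildebrand's lower bound theorem for nonnegative multiplicative functions (which is the genuinely new ingredient and the one the paper highlights) yields $\sum_{n\leq\sqrt{x}}f(n)\gg\sqrt{x}/\log\log x$, hence $\mathcal{B}(x)\ll\sqrt{x}\log\log x$. The $\log\log x$ emerges from $\prod_{p\leq\log x}(1-1/p)\gg 1/\log\log x$ inside Hildebrand's bound, not from averaging a singular series. In short: you have the right skeleton and the right bootstrap, but the decisive ideas — the $L^2$ control of the deviations $\epsilon_p$ via the larger sieve on both sides, and Hildebrand's mean-value lower bound — are missing, and the proposed $k$-tuple averaging cannot substitute for them.
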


It was known before that ${\cal A}(x) {\cal B}(x)\ll x$ (for a survey of references see \cite{Elsholtz:2006}) and $\max ({\cal A}(x), {\cal B}(x))\ll x^{1/2}(\log x)^2$ in the infinite version of the problem\footnote{One can also consider the slightly different ``finitary'' version of the decomposition problem, where we wish to know whether or not ${\cal P}\cap[x_0,x]= {\cal A}+{\cal B}$ for sufficiently large (fixed) $x_0$ and $x$. Previously one only had the weaker bound $\max (\#{\cal A}, \#{\cal B}) \ll x^{1/2}(\log x)^4$ (see \cite{Elsholtz:2001mathematika}) in the finitary version, due to a problem with the ranges of primes for sieving, but our new bound $\ll x^{1/2} \log\log x$ applies in this case too.}. In this case, our improvement is derived by using a lower bound estimate of Hildebrand~\cite{Hildebrand:1987} for sums of multiplicative functions to make part of the sieving argument more efficient. See section \ref{sec:ostmannrevisited}, below.

It is an odd feature of Ostmann's problem that we know that modulo many primes, the putative summand sets ${\cal A}$ and ${\cal B}$ both must lie in about $\frac{p}{2}$ residue classes. See \cite{Elsholtz:2001mathematika}, or the proof of Theorem \ref{Ostmann} in section \ref{sec:ostmannrevisited}. Heuristically one would therefore expect that the sets ${\cal A}$ and ${\cal B}$ 
must have some special structure, i.e. are essentially the values of a quadratic polynomial. So ${\cal A} +{\cal B}$ must resemble somewhat a binary quadratic form, which we certainly wouldn't expect to coincide with the set of primes.
But currently this heuristic cannot be substantiated rigorously. Some further progress in this direction will appear in forthcoming
work of Green and Harper.

\section{Some background results}{\label{sec:background}}
\subsection{Lemmas from sieve theory}
The main ingredients of our proofs are Selberg's sieve, Montgomery's large sieve inequality, and Gallagher's larger sieve, combined in various ways. We state versions of these sieve results now.

\begin{lemma}[Selberg, see Theorem 7.1 of \cite{FriedlanderandIwaniec:2010}]{\label{lem:Selberg}}
Let ${\cal P}_0$ denote a finite subset of the primes, and let
${\cal{C}}$ be a finite, non-empty set of integers. 
Let $r_1, \dotsc , r_k$ be distinct integers.
Define a set $\mathcal{B} \subseteq \mathcal{C}$ in the following way:
$$ \mathcal{B} := \{c \in \mathcal{C} : c \not\equiv r_{i} \bmod p, \; 
 1 \leq i \leq k, \; p \in {\cal P}_0\}. $$
Finally, suppose that $\omega_{\cal B} : {\cal{P}}_0 \rightarrow \R$ is any function with $0 \leq \omega_{\cal B}(p) < p$, and that $Q$ is any positive integer. Then we have
$$ \#\mathcal{B} \leq \frac{\#\mathcal{C}}{L} + \sum_{\substack{d \leq Q^{2},
    \\ p \mid d \Rightarrow p \in {\cal P}_0}} \mu^{2}(d) \tau_{3}(d)
\left|\#\{c \in \mathcal{C}: d \mid (c-r_{1})(c-r_{2})\dotsm (c-r_{k})\} - \#\mathcal{C} \prod_{p \mid d} \frac{\omega_{\cal B}(p)}{p} \right|, $$
where $\mu(d)$ denotes the M\"obius function, $\tau_{3}(d) := \#\{(u,v,w) \in \N^{3}: uvw=d\}$, and
$$ L:= \sum_{\substack{q \leq Q, \\ p \mid q \Rightarrow p \in {\cal P}_0}}
\mu^2(q) \prod_{p \mid q} \dfrac{\omega_{\cal B}(p)}{p-\omega_{\cal B}(p)}. $$
\end{lemma}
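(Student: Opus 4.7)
The plan is to run Selberg's $\Lambda^{2}$-upper bound sieve on the polynomial $P(c) := \prod_{i=1}^{k}(c-r_{i})$, using the weight function $\omega_{\mathcal B}$ as the ``pretended'' local density at each prime. The condition defining $\mathcal B$ is exactly $\gcd(P(c), \prod_{p \in \mathcal P_0} p) = 1$, so one has the pointwise majorant
\[
\mathbf{1}_{\mathcal B}(c) \;\leq\; \Biggl(\sum_{d \mid P(c)} \lambda_{d}\Biggr)^{\!2},
\]
valid for any real numbers $\lambda_d$ with $\lambda_1 = 1$. I would restrict the support of $(\lambda_d)$ to squarefree $d \leq Q$ all of whose prime factors lie in $\mathcal P_0$, so that when we expand and sum over $c \in \mathcal C$, only such $d$ appear in the resulting expression.

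Summing the majorant over $\mathcal C$ and swapping the order of summation gives
\[
\#\mathcal B \;\leq\; \sum_{d_{1},d_{2}} \lambda_{d_{1}}\lambda_{d_{2}} \; \#\{c \in \mathcal C : [d_{1},d_{2}] \mid P(c)\}.
\]
The next step is to separate this into a ``main'' contribution and a remainder by writing
\[
\#\{c \in \mathcal C : d \mid P(c)\} \;=\; \#\mathcal C \prod_{p \mid d}\frac{\omega_{\mathcal B}(p)}{p} \;+\; r(d),
\]
so the main term is $\#\mathcal C \sum_{d_1,d_2}\lambda_{d_1}\lambda_{d_2}\, g([d_{1},d_{2}])$, where $g$ is the multiplicative function $g(d)=\prod_{p\mid d}\omega_{\mathcal B}(p)/p$, and the remainder is $\sum_{d_{1},d_{2}}\lambda_{d_{1}}\lambda_{d_{2}}\, r([d_{1},d_{2}])$.

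For the main term, I would carry out the standard Selberg optimization: diagonalize the quadratic form in $(\lambda_d)$ subject to $\lambda_1 = 1$ by a M\"obius-type change of variables. Writing $g(p)/(1-g(p)) = \omega_{\mathcal B}(p)/(p - \omega_{\mathcal B}(p))$, the minimum value is exactly $1/L$, where $L$ is as in the statement; this yields main term $\#\mathcal C/L$. A routine consequence of this choice is the pointwise bound $|\lambda_{d}| \leq 1$ for each admissible $d$. The remainder is then estimated by
\[
\Biggl|\sum_{d_{1},d_{2}\leq Q} \lambda_{d_{1}}\lambda_{d_{2}}\, r([d_{1},d_{2}])\Biggr|
\;\leq\; \sum_{d \leq Q^{2}} \mu^{2}(d)\, |r(d)| \sum_{[d_{1},d_{2}]=d} 1,
\]
where the outer sum is over squarefree $d$ with prime factors in $\mathcal P_0$, and the inner count equals $3^{\omega(d)} = \tau_{3}(d)$ because each prime $p \mid d$ independently lies in one of the three sets ``$d_1$ only'', ``$d_2$ only'', or ``both''. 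Plugging in the expression for $r(d)$ gives precisely the claimed upper bound.

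The only subtle point is to make sure one is tracking the correct arithmetic function: since $\omega_{\mathcal B}$ is supplied by the user and need not equal the true local density of $P$ modulo $p$, the ``remainder'' $r(d)$ genuinely measures both the fluctuation of $P \bmod d$ on $\mathcal C$ and the discrepancy between $\omega_{\mathcal B}(p)/p$ and the true density --- but since we never use the true density in the optimization, this causes no trouble. The rest is bookkeeping, and in particular no hypothesis on $\mathcal C$ is needed beyond that it be finite and nonempty. Indeed, this statement is precisely Theorem 7.1 of Friedlander--Iwaniec \cite{FriedlanderandIwaniec:2010}, so I would simply cite it rather than reproving it in the paper.
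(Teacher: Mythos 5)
Your proposal is correct and matches the paper's own treatment: the paper likewise derives Lemma~\ref{lem:Selberg} by citing Theorem~7.1 of Friedlander--Iwaniec, choosing $a_{n} = \sum_{c \in \mathcal{C}} \mathbf{1}_{(c-r_{1})\dotsm (c-r_{k})=n}$, $P = \prod_{p \in \mathcal{P}_0} p$, and $g(p)=\omega_{\mathcal{B}}(p)/p$ there. Your additional sketch of the standard $\Lambda^2$ argument (pointwise majorant, diagonalization giving $1/L$, $|\lambda_d|\leq 1$, and $\sum_{[d_1,d_2]=d}1 = 3^{\omega(d)} = \tau_3(d)$ for squarefree $d$) is accurate but is not reproduced in the paper, which simply cites the reference.
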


Lemma \ref{lem:Selberg} follows e.g. from Theorem 7.1 of 
Friedlander and Iwaniec's book~\cite{FriedlanderandIwaniec:2010} if one chooses
$a_{n} = \sum_{c \in \mathcal{C}} \textbf{1}_{(c-r_{1})(c-r_{2})\dotsm (c-r_{k})=n}$, $P = \prod_{p \in {\cal P}_0}p$, and $g(p)=\omega_{\mathcal{B}}(p)/p$ in their theorem.

\begin{lemma}[Montgomery \cite{Montgomery:1978}]
{\label{lem:Montgomery}}
Let ${\cal{P}}$ denote the set of all primes.
Let ${\cal{B}}$ denote a set of integers that avoids $\omega_{\cal B}(p)$ residue classes modulo the prime $p$.
Here $\omega_{\cal B} : {\cal{P}} \rightarrow \N$
with $0 \leq \omega_{\cal B}(p) \leq p-1$.
Let ${\cal B}(x)$ denote the counting function ${\cal B}(x) =
\sum_{b \leq x, b \in {\cal B}} 1$.
Then the following upper bound holds, for any $Q$:
\[ {\cal B}(x) \leq \dfrac{x+Q^2}{L}, \text{ where }
L= \sum_{q \leq Q}
\mu^2(q) \prod_{p \mid q} \dfrac{\omega_{\cal B}(p)}{p-\omega_{\cal B}(p)}. \]
\end{lemma}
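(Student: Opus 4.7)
The plan is to follow Montgomery's original approach, dualising the sieve hypothesis against the analytic form of the large sieve inequality. First, introduce the exponential sum
\[ S(\alpha) := \sum_{\substack{b \in \mathcal{B} \\ b \leq x}} e(b\alpha), \]
so that $S(0) = \mathcal{B}(x)$. The analytic large sieve inequality (provable e.g.\ via Gallagher's Sobolev bound, or Selberg's smoothing, and indifferent to the sieve hypothesis) supplies the upper bound
\[ \sum_{q \leq Q} \; \sum_{\substack{1 \leq a \leq q \\ (a,q)=1}} |S(a/q)|^{2} \;\leq\; (x + Q^{2})\, \mathcal{B}(x). \]
All of the remaining work lies in producing a matching lower bound that actually sees the sieve hypothesis.

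At an individual prime $p$, let $N_{r}$ be the number of $b \in \mathcal{B} \cap [1,x]$ with $b \equiv r \pmod p$; the sieve hypothesis says that at most $p - \omega_{\mathcal{B}}(p)$ of these counts are nonzero, so Cauchy--Schwarz gives $\sum_{r} N_{r}^{2} \geq \mathcal{B}(x)^{2}/(p-\omega_{\mathcal{B}}(p))$. Combining this with Parseval's identity $\sum_{a=0}^{p-1} |S(a/p)|^{2} = p \sum_{r} N_{r}^{2}$ and isolating the $a=0$ term yields
\[ \sum_{a=1}^{p-1} |S(a/p)|^{2} \;\geq\; \frac{\omega_{\mathcal{B}}(p)}{p - \omega_{\mathcal{B}}(p)}\, \mathcal{B}(x)^{2}. \]
To extend this to squarefree $q$, I would expand $T(q) := \sum_{(a,q)=1}^{q} |S(a/q)|^{2} = \sum_{n,m \in \mathcal{B}} c_{q}(n-m)$ using the Ramanujan sum $c_{q}$, and use its multiplicativity across coprime moduli together with the Chinese remainder theorem on residue counts to obtain
\[ T(q) \;\geq\; \mathcal{B}(x)^{2} \prod_{p \mid q} \frac{\omega_{\mathcal{B}}(p)}{p - \omega_{\mathcal{B}}(p)} \qquad \text{for squarefree } q. \]
Summing over squarefree $q \leq Q$ produces $L \cdot \mathcal{B}(x)^{2}$ on the left, and comparison with the analytic large sieve upper bound gives $\mathcal{B}(x) \leq (x+Q^{2})/L$, as required.

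The main obstacle is the multiplicative extension from the prime case to all squarefree $q$: the inequality at a single prime does not obviously bootstrap, because one wants to lower-bound only the part of the full sum $\sum_{a=0}^{q-1} |S(a/q)|^{2}$ coming from $a$ coprime to $q$. The cleanest route is Montgomery's observation that $T(q)$ factorises as a quadratic form over the prime divisors of $q$ via character orthogonality, whereupon the single-prime bound propagates automatically; alternatively one can proceed by induction on the number of prime factors of $q$, at each step correcting for the contributions of proper divisors using the Ramanujan sum expansion. Once the multiplicative bound is in hand, the remainder of the proof is formal.
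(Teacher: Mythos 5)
The paper states Lemma \ref{lem:Montgomery} purely as a citation of Montgomery's 1978 survey, so there is no internal proof to compare against; what you have written is a reconstruction of the standard proof, and in broad outline it is the right one: dualise against the analytic large sieve, obtain the single-prime lower bound via Parseval plus Cauchy--Schwarz on the residue counts, and propagate to squarefree $q$. The only soft spot is exactly the step you flag yourself. As phrased, neither ``expand in Ramanujan sums and correct for proper divisors'' nor ``the quadratic form factorises via character orthogonality'' pins down how the multiplicativity is actually established, and a naive M\"obius inversion $\sum_{(a,q)=1}|S(a/q)|^2 = \sum_{d\mid q}\mu(q/d)\sum_{a \bmod d}|S(a/d)|^2$ does not bootstrap the single-prime bound, because the $\mu(q/d)$ change sign. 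The clean route is an induction on $\Omega(q)$: write $q=pr$ with $p\nmid r$ and decompose the reduced fractions as $a/q = c/r + b/p$; for each fixed $c$ with $(c,r)=1$, set $S_h(\alpha):=\sum_{n\equiv h\ (\mathrm{mod}\ p)} a_n e(n\alpha)$ so that Parseval in $b$ gives $\sum_{b=0}^{p-1}|S(c/r+b/p)|^2 = p\sum_h |S_h(c/r)|^2$, and note that the sieve hypothesis at $p$ forces $S_h(\alpha)\equiv 0$ for $\omega_{\mathcal{B}}(p)$ values of $h$ \emph{uniformly in $\alpha$}; Cauchy--Schwarz over the remaining $h$ then yields $\sum_{b=1}^{p-1}|S(c/r+b/p)|^2 \geq \frac{\omega_{\mathcal{B}}(p)}{p-\omega_{\mathcal{B}}(p)}|S(c/r)|^2$, and summing over $c$ and applying the inductive hypothesis to $r$ finishes the step. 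With that wording substituted for your somewhat vaguer two alternatives, the proposal is a complete and correct proof, matching the argument Montgomery gives in the cited source.
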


\begin{lemma}[Gallagher's larger sieve, \cite{Gallagher:1971}]
{\label{lem:Gallagherls}}
Let ${\cal P}_0$ denote any set of primes such that ${\cal{A}}$
lies modulo $p$ (for $p \in {\cal P}_0$) in at most $\nu_{\cal A}(p)$
residue classes. Then the following bound holds for ${\cal A}(N) :=
\sum_{a \leq N, a \in {\cal A}} 1$,
provided the denominator is positive:
\[ {\cal A}(N) \leq \dfrac{ - \log N + \sum_{p \in {\cal P}_0} \log p}{- \log N
+ \sum_{p \in {\cal P}_0} \dfrac{ \log p}{\nu_{\cal A}(p)} }.
\]
\end{lemma}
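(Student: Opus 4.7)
The plan is to carry out the classical double-counting / Cauchy--Schwarz proof of the larger sieve. Let $A = {\cal A}(N)$, and for each $p \in {\cal P}_0$ and each residue class $r \bmod p$ let $Z(p,r) := \#\{a \in {\cal A} : a \leq N,\ a \equiv r \bmod p\}$. The quantity I would analyze is
$$ S := \sum_{p \in {\cal P}_0} \log p \sum_{r=0}^{p-1} Z(p,r)^2, $$
which has a lower bound coming from the residue-class restriction and an upper bound coming from the fact that two elements of ${\cal A}$ cannot share too many common prime factors (since their difference is bounded by $N$).

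For the lower bound, fix $p \in {\cal P}_0$. By hypothesis only $\nu_{\cal A}(p)$ of the $Z(p,r)$ are nonzero, and $\sum_r Z(p,r) = A$, so by Cauchy--Schwarz $\sum_r Z(p,r)^2 \geq A^2/\nu_{\cal A}(p)$. Summing against $\log p$ gives
$$ S \;\geq\; A^2 \sum_{p \in {\cal P}_0} \frac{\log p}{\nu_{\cal A}(p)}. $$
For the upper bound, expand the inner square as a double sum over $a,a' \in {\cal A}$ with $a \equiv a' \bmod p$. The diagonal $a=a'$ contributes $A \sum_{p \in {\cal P}_0} \log p$. For the off-diagonal terms $a \neq a'$, the primes $p \in {\cal P}_0$ that make $p \mid (a-a')$ satisfy $\sum_{p \mid (a-a')} \log p \leq \log|a-a'| \leq \log N$ by unique factorization; summing over the at most $A(A-1)$ ordered pairs yields a contribution at most $A(A-1)\log N$. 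Hence
$$ S \;\leq\; A \sum_{p \in {\cal P}_0} \log p \;+\; A(A-1)\log N. $$

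Combining the two bounds and dividing by $A$ gives
$$ A \sum_{p \in {\cal P}_0} \frac{\log p}{\nu_{\cal A}(p)} \;\leq\; \sum_{p \in {\cal P}_0} \log p \;+\; (A-1)\log N, $$
i.e.\ $A\bigl(\sum_{p \in {\cal P}_0} \log p/\nu_{\cal A}(p) - \log N\bigr) \leq \sum_{p \in {\cal P}_0} \log p - \log N$. Under the positivity assumption on the denominator in the claimed bound, division yields exactly the stated inequality. There is no real obstacle here beyond being careful with signs; the only subtle ingredient is the off-diagonal bound, where one uses that $p \mid (a-a')$ implies $p \leq |a-a'| < N$, so that the relevant sum of $\log p$ is controlled by $\log N$ regardless of which primes in ${\cal P}_0$ are involved.
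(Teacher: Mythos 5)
Your proof is correct and is the standard double-counting argument for Gallagher's larger sieve; the paper itself states this lemma with a citation to Gallagher (1971) and does not reproduce a proof, and your argument is essentially the one in Gallagher's original paper. One small point worth making explicit: the off-diagonal bound $\sum_{p \mid (a-a')} \log p \leq \log|a-a'| \leq \log N$ uses that $\mathcal{A}$ consists of non-negative integers bounded by $N$, which is the implicit setting in the lemma's statement.
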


\subsection{The inverse sieve argument}{\label{sec:inversesieve}}

We are going to investigate sums $a+b$ that must consist of restricted prime
factors. Suppose that in a particular problem we must have $a+b \not\equiv 0
\bmod p$ whenever $a \in \mathcal{A}$ and $b \in \mathcal{B}$. Then a residue
class $a$ that occurs in the set ${\cal A}$ forbids a residue class $-a$ in
${\cal B}$, in other words (in sieve theory notation, see Lemmas \ref{lem:Montgomery}, \ref{lem:Gallagherls}) $\nu_{\cal A}(p)=\omega_{\cal B}(p)$. This means
that each residue class modulo $p$ can be used for a sieve process: either it
occurs in ${\cal A}$ and hence $-a\bmod p$ is forbidden in ${\cal B}$, which is
good when estimating ${\cal B}(x)$; or it does not occur in ${\cal A}$ and can be used when connecting $\nu_A(p)$ and ${\cal A}(x)$. It has proved
successful to conclude from ${\cal A}(x)$ to $\nu_{\cal A}(p)$ in some average
sense in the ``inverse sieve'' direction\footnote{The expression ``inverse
  sieve'' was
introduced in the first author's Habilitationsschrift 
\cite{Elsholtz:2002habil} to recall that in classical sieve problems one
deduces bounds on the counting function of a sequence from the number of sifted classes, whereas here one draws conclusions from the size of the counting function to the number of sifted classes.},
 using Gallagher's larger sieve, then to convert this into information on $\omega_{\cal B}(p)$ on average and use this to give an upper bound on ${\cal B}(x)$. (See for example \cite{Elsholtz:2001mathematika, Elsholtz:2002, Elsholtz:2006, Elsholtz:2008}).

It should be emphasized that this kind of argument is quite stable. It works
for a wide range of ${\cal A}(x)$ and it also works if only a subset of primes can be used for sieving, assuming these primes are somewhat regularly distributed so that the various sums of primes that arise in the argument get a fair share when restricted to the subset.

As a concrete example of (the first part of) this strategy we shall prove the following result, that will be needed several times:
\begin{lemma}{\label{InverseSieve}}
Let ${\cal P}_0$ be a subset of the primes. Let $\mathcal{A} \subset [1,x]$ be a set of integers that satisfies $\#\mathcal{A} = k \geq 2$, and that occupies $\nu_{\cal A}(p)$ residue classes modulo the prime $p$. Then for any $y \geq 10$ we have
\begin{equation}{\label{eq:lemmalowerboundonclasses}}
 \sum_{y/2 < p \leq y, p \in {\cal P}_0} \frac{\nu_{\cal A}(p)}{p} \geq
 k\sum_{y/2 < p \leq y, p \in {\cal P}_0} \frac{1}{p} - \frac{(k^{2}-k)\log
   x}{(y/2) \log(y/2)}.
\end{equation}
In particular, if $\sum_{y/2 < p \leq y, p \in {\cal P}_0} \log p \geq 8k\log
x$ then we have
$$ \sum_{y/2 < p \leq y, p \in {\cal P}_0} \frac{\nu_{\cal A}(p)}{p} \geq \frac{k}{2} \sum_{y/2 < p \leq y, p \in {\cal P}_0} \frac{1}{p} . $$
\end{lemma}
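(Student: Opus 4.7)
The plan is to carry out the standard inverse-sieve pair-counting argument. The main inequality amounts to showing
\[
\sum_{\substack{y/2 < p \leq y \\ p \in \mathcal{P}_0}} \frac{k - \nu_{\mathcal{A}}(p)}{p} \leq \frac{(k^{2}-k)\log x}{(y/2)\log(y/2)},
\]
and the natural way to bound the left-hand side is to relate $k - \nu_{\mathcal{A}}(p)$ to the number of pairs of elements of $\mathcal{A}$ that collide modulo $p$.

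First I would establish the elementary combinatorial bound that, for every prime $p$, the number of unordered pairs $\{a,a'\}$ with $a,a' \in \mathcal{A}$, $a \neq a'$, and $a \equiv a' \pmod{p}$ is at least $k - \nu_{\mathcal{A}}(p)$. This follows from the identity $\sum_{i} \binom{n_i}{2} \geq \sum_{i}(n_i - 1) = k - \nu_{\mathcal{A}}(p)$, where $n_i$ denotes the multiplicity of the $i$-th occupied residue class (each summand $\binom{n}{2} \geq n-1$ for $n \geq 1$).

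Next I would sum over $p \in \mathcal{P}_0$ with $y/2 < p \leq y$ and swap the order of summation, so that the left-hand side is bounded by $\sum_{\{a,a'\}} \sum_{p \mid a'-a,\, p \in \mathcal{P}_0,\, y/2 < p \leq y} \frac{1}{p}$. Since $p > y/2$ gives $1/p < 2/y$, and since $a'-a \leq x$ has at most $\log x/\log(y/2)$ prime divisors exceeding $y/2$, the double sum is at most
\[
\binom{k}{2}\cdot\frac{2}{y}\cdot\frac{\log x}{\log(y/2)} = \frac{(k^{2}-k)\log x}{y\log(y/2)},
\]
which is even a touch stronger than the stated bound (because $1/y \leq 1/(y/2)$). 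The only real step is deciding to count pairs, and no step is especially hard.

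Finally, for the ``in particular'' clause, I would simply compare the two terms on the right of \eqref{eq:lemmalowerboundonclasses}. Under the hypothesis $\sum_{p} \log p \geq 8k\log x$, the trivial estimate
\[
\sum_{\substack{y/2 < p \leq y \\ p \in \mathcal{P}_0}} \frac{1}{p} \geq \frac{1}{y\log y}\sum_{\substack{y/2 < p \leq y \\ p \in \mathcal{P}_0}} \log p \geq \frac{8k \log x}{y \log y}
\]
together with $\log(y/2) \geq \tfrac{1}{2}\log y$ (valid for $y \geq 4$, hence certainly for $y \geq 10$) shows that the error term $(k^{2}-k)\log x / ((y/2)\log(y/2))$ is at most $(k/2)\sum_{p} 1/p$, which rearranges to the claimed conclusion.
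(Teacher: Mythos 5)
Your proof is correct, and it takes a genuinely different route from the paper's. The paper proves inequality \eqref{eq:lemmalowerboundonclasses} by invoking Gallagher's larger sieve (Lemma \ref{lem:Gallagherls}) applied to $\mathcal{A}$ on the dyadic range of primes: after rearranging the larger-sieve bound one obtains $\sum_p (k-\nu_{\mathcal{A}}(p))\log p/(k\nu_{\mathcal{A}}(p)) \leq (1-1/k)\log x$, and then the crude bounds $\nu_{\mathcal{A}}(p)\le k$ and $p\log p > (y/2)\log(y/2)$ convert this into the stated inequality. You instead prove the same estimate directly and elementarily by counting collisions: the combinatorial inequality $\sum_i\binom{n_i}{2}\ge\sum_i(n_i-1)=k-\nu_{\mathcal{A}}(p)$ plus a change of the order of summation, with the observation that each difference $a'-a$ of size less than $x$ has at most $\log x/\log(y/2)$ prime factors exceeding $y/2$. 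Conceptually these are cousins --- Gallagher's larger sieve is itself a Cauchy--Schwarz-flavoured double count over collision pairs, weighted by $\log p$ --- but yours cuts out the $\log p$ weights and the Cauchy--Schwarz step entirely, and as you note it even lands on the slightly sharper denominator $y\log(y/2)$ in place of $(y/2)\log(y/2)$. Your treatment of the ``in particular'' clause matches the paper's in substance. One tiny bookkeeping point: in the inner sum one should restrict to $p\in\mathcal{P}_0$, but this only helps since you then drop that constraint to get the divisor-counting upper bound; it is worth saying this explicitly.
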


\begin{proof}[Proof of Lemma \ref{InverseSieve}]
Before we begin the proof, note that we may assume that
$$ \sum_{y/2 < p \leq y, p \in {\cal P}_0} \log p > \log x, $$
since otherwise we have
$$ k\sum_{y/2 < p \leq y, p \in {\cal P}_0} \frac{1}{p} \leq \frac{k}{(y/2) \log(y/2)} \sum_{y/2 < p \leq y, p \in {\cal P}_0} \log p \leq \frac{k \log x}{(y/2) \log(y/2)} $$
and the assertion ({\ref{eq:lemmalowerboundonclasses}}) of the lemma is trivial.

Applying Gallagher's larger sieve (that is Lemma \ref{lem:Gallagherls}), we see
$$ -\log x + \sum_{y/2 <p \leq y,  p \in {\cal P}_0} \frac{\log p}{\nu_{\cal A}(p)} \leq \frac{-\log x + \sum_{y/2 <p \leq y,  p \in {\cal P}_0} \log p}{\#\mathcal{A}} = \frac{-\log x + \sum_{y/2 <p \leq y,  p \in {\cal P}_0} \log p}{k}. $$
Strictly speaking this bound only follows from Lemma \ref{lem:Gallagherls} if the left hand side is strictly positive, but it is trivial otherwise because, by our initial remarks, we may assume that the right hand side is positive. Thus we see
$$ \sum_{y/2 <p \leq y,  p \in {\cal P}_0} \frac{\log p}{\nu_{\cal A}(p)} - \sum_{y/2 <p \leq y,  p \in {\cal P}_0} \frac{\log p}{k} \leq (1-1/k)\log x . $$

On the other hand, we certainly always have $\nu_{\cal A}(p) \leq \#\mathcal{A} = k$, so
\begin{eqnarray}
\sum_{y/2 <p \leq y,  p \in {\cal P}_0} \frac{\log p}{\nu_{\cal A}(p)} - \sum_{y/2 <p \leq y,  p \in {\cal P}_0} \frac{\log p}{k} & = & \sum_{y/2 <p \leq y,  p \in {\cal P}_0} \frac{(k - \nu_{\cal A}(p)) \log p}{k \nu_{\cal A}(p)} \nonumber \\
& \geq & \frac{(y/2) \log(y/2)}{k^{2}} \sum_{y/2 <p \leq y,  p \in {\cal P}_0} \frac{(k - \nu_{\cal A}(p))}{p}. \nonumber
\end{eqnarray}
It follows from the inequalities above that
\begin{eqnarray*}
 \sum_{y/2 < p \leq y, p \in {\cal P}_0} \frac{\nu_{\cal A}(p)}{p} 
&=& \sum_{y/2 < p \leq y, p \in {\cal P}_0} \frac{k}{p} - \sum_{y/2 < p \leq y,
  p \in {\cal P}_0} \frac{(k - \nu_{\cal A}(p))}{p} \\
&\geq &
\sum_{y/2 < p \leq y, p \in {\cal P}_0}
 \frac{k}{p} - \frac{(k^{2}-k)\log x}{(y/2) \log(y/2)}, \qquad (*)
\end{eqnarray*}
as claimed.

For the final assertion of the lemma, we just note that if $\sum_{y/2 <p \leq y,  p \in {\cal P}_0} \log p \geq 8k\log x$ and $y \geq 10$ then
$$ k \sum_{y/2 <p \leq y,  p \in {\cal P}_0} \frac{1}{p} \geq \frac{k}{y \log y} \sum_{y/2 <p \leq y,  p \in {\cal P}_0}  \log p \geq \frac{4k^{2} \log x}{y \log(y/2)}. $$
Thus the first term in $(*)$ is at least twice as large as the second.
\end{proof}

We remark that most of the effort in the proof of Lemma \ref{InverseSieve} goes into proving the first assertion, with the error term $2(k^{2}-k)(\log x)/(y \log(y/2))$. We will need this for the proofs of Propositions \ref{smallkscs} and \ref{smallkbv}, below.

\subsection{Ruzsa's inequality}
S\'{a}rk\"ozy \cite{Sarkozy:2012} recently showed that the following inequality of Ruzsa can be successfully applied to questions of ternary decomposability.

\begin{lemma}[Ruzsa \cite{Ruzsa:2009}, Theorem 9.1]{\label{lem:Ruzsa}}
Let ${\cal A,B,C}$ be finite sets in a commutative group. We have
\[|{\cal A+B+C}|^2 \leq |{\cal A+B}| |{\cal A+C}| |{\cal B+C}|.\]
\end{lemma}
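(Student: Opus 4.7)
The plan is to derive the inequality by applying a Plünnecke--Ruzsa-type sumset inequality three times, once with each of $A$, $B$, $C$ as ``pivot'', and then combining with the trivial bound $|A+B+C| \leq |A|\,|B|\,|C|$. The key ingredient I would invoke is the Plünnecke--Ruzsa inequality in the form
\[ |X| \cdot |X+Y+Z| \;\leq\; |X+Y| \cdot |X+Z|, \]
valid for any finite non-empty sets $X, Y, Z$ in an abelian group. This is a standard consequence of Plünnecke's inequality; it appears, for instance, as the case $k=2$ of the Plünnecke--Ruzsa inequality for multiple summands $|B|\cdot|A_1+\ldots+A_k+B| \leq \prod_i|A_i+B|$ (after dividing out by $|B|^{k-1}$), and admits a short proof via Petridis's bipartite-graph magnification-ratio argument.

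With this inequality in hand, I would apply it three times, letting $X$ successively equal $A$, $B$, $C$, to obtain
\begin{align*}
|A| \cdot |A+B+C| &\leq |A+B| \cdot |A+C|, \\
|B| \cdot |A+B+C| &\leq |A+B| \cdot |B+C|, \\
|C| \cdot |A+B+C| &\leq |A+C| \cdot |B+C|.
\end{align*}
Multiplying these three inequalities together yields
\[ |A|\,|B|\,|C| \cdot |A+B+C|^3 \;\leq\; \bigl(|A+B|\,|A+C|\,|B+C|\bigr)^2. \]

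Finally, since every element of $A+B+C$ has at least one representation of the form $a+b+c$, the completely trivial bound $|A+B+C| \leq |A|\,|B|\,|C|$ holds. Substituting this into the previous display in place of the factor $|A|\,|B|\,|C|$ on the left gives
\[ |A+B+C|^4 \;\leq\; \bigl(|A+B|\,|A+C|\,|B+C|\bigr)^2, \]
and taking a square root produces the claimed inequality. The main obstacle is the Plünnecke--Ruzsa input inequality itself, which is non-trivial; once it is accepted as a black box, the rest of the argument is routine algebra, and the symmetry between $A$, $B$, $C$ is exactly what allows the three ``pivot'' inequalities to combine cleanly into the symmetric conclusion.
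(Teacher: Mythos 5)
The paper does not prove this lemma: it is quoted verbatim from Ruzsa's lecture notes \cite{Ruzsa:2009} and used as a black box, so there is no ``paper's proof'' to compare your argument against. Evaluating your argument on its own terms, the algebra at the end is fine, but the key input inequality you invoke is not what you claim it to be, and this creates a genuine gap.

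You state as your ``key ingredient'' that
\[ |X|\cdot|X+Y+Z| \;\leq\; |X+Y|\cdot|X+Z|, \]
and attribute it to the Pl\"unnecke--Ruzsa inequality for multiple summands, which you write as $|B|\cdot|A_1+\ldots+A_k+B| \leq \prod_i|A_i+B|$. That formula is not correct: at $k=1$ it reads $|B|\cdot|A_1+B|\leq|A_1+B|$, i.e.\ $|B|\leq 1$. The correct multiple-summand form (Ruzsa; cleanly proved by Petridis) is
\[ |B|^{k-1}\,|A_1+\ldots+A_k| \;\leq\; \prod_{i=1}^{k}|A_i+B|, \]
\emph{without} the extra $+B$ inside the left-hand sumset. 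At $k=2$ with base $B=X$ this gives only
\[ |X|\cdot|Y+Z| \;\leq\; |X+Y|\cdot|X+Z|, \]
which is strictly weaker than what you use, since $|Y+Z|\leq|X+Y+Z|$. Petridis's magnification-ratio argument also does not rescue the stronger form: it yields $|X'+Y+Z|\leq K|X'+Z|$ where $X'\subseteq X$ is a subset attaining the minimal ratio $K=|X'+Y|/|X'|$, and when $X'$ is a proper subset one has no control over $|X+Y+Z|$ itself. So the inequality you need is not the Pl\"unnecke--Ruzsa inequality and is not supplied by the argument you sketch; whether it is true is a separate, nontrivial question you have not addressed.

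Moreover, if one substitutes the correct Pl\"unnecke--Ruzsa form into your scheme, the conclusion degrades. From $|A|\cdot|B+C|\leq|A+B|\cdot|A+C|$ and the trivial $|B+C|\geq|A+B+C|/|A|$ one gets $|A+B+C|\leq|A+B|\cdot|A+C|$, and symmetrically; multiplying the three such bounds gives only $|A+B+C|^{3}\leq\bigl(|A+B|\,|A+C|\,|B+C|\bigr)^{2}$, i.e.\ $|A+B+C|^{3/2}\leq|A+B|\,|A+C|\,|B+C|$, short of the exponent $2$ in Ruzsa's inequality. In short, the proof as written does not establish the lemma; you either need to prove the stronger ``pivot'' inequality you asserted (which is not a standard citation), or use a genuinely different argument, as Ruzsa does in \cite{Ruzsa:2009}.
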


S\'{a}rk\"ozy used this lemma in $\F_p$, but we
will see that Ruzsa's inequality provides a simple way to
deduce a ternary indecomposability result from a binary counting result over the integers too. For a
previous approach to such deductions, which used an inverse sieve and
made use of local knowledge modulo $p$, by means of
the Cauchy--Davenport theorem, see \cite{Elsholtz:2001mathematika}.

\subsection{Finite and infinite decomposability statements}{\label{sec:finiteinfinite}}
The results and conjectures stated in the introduction and section \ref{sec:announcement} refer to asymptotic decomposability of infinite sets, but in our proofs we will want to work with finite sets. The reduction to this situation is not difficult, but to avoid any possible obscurity we make some remarks about it now.

Suppose we are investigating whether or not ${\cal S}\sim{\cal A}+{\cal B}$. By definition, this would mean that
\[({\cal A+B})\cap [x_0, \infty)= {\cal S}\cap [x_0,\infty) , \]
where $x_{0}$ would be a fixed number depending on $\mathcal{A}$, $\mathcal{B}$ and $\mathcal{S}$. In particular, for any large $x$ we would have
\[{\cal S}\cap [x_0,x] \subseteq (\mathcal{A} \cap [0,x]) + (\mathcal{B} \cap [0,x]) \subseteq ({\cal S}\cap [x_0,2x]) \cup [0,x_{0}] . \]
In the problems that we study we know that elements of $\mathcal{S}$ are not divisible by certain primes, and this information can usually be applied straightforwardly to the ``finitised'' versions. For example, if we are investigating the smooth numbers $\mathcal{S}_{f(n)}$, where $f(n)$ is an increasing function, then all prime factors larger than $\max(x_{0},f(2x))$ are forbidden to occur in the sumset $(\mathcal{A} \cap [0,x]) + (\mathcal{B} \cap [0,x])$, and so can be used for sieving arguments. The other important point is that, for the infinite sets $\mathcal{S}$ that we are interested in, we have
$$ \#({\cal S}\cap [x_0,2x]) \ll \#{\cal S}\cap [x_0,x] $$
for all large $x$, with an absolute implied constant\footnote{This is clear when $\mathcal{S}$ is the set of primes, for example. For sets $\mathcal{S}_{f(n)}$ of smooth numbers it is true provided $f(2n) \leq f(n)(1+(100\log f(n))/\log n)$, say, as hypothesised in Theorem \ref{smooth1} and discussed further in section \ref{sec:proofsofthms}.}. Thus we know that $(\mathcal{A} \cap [0,x]) + (\mathcal{B} \cap [0,x])$ must make up a positive fraction of $({\cal S}\cap [x_0,2x]) \cup [0,x_{0}]$. It is really this assumption, rather than the assumption that they are identical, which is sufficient to make our finitary arguments run, and our general finitary Theorem \ref{genthm} explicitly covers the case where $\mathcal{A}+\mathcal{B}$ is equal to any reasonably large subset of the target set.

If $\mathcal{S} = \mathcal{P}$, the set of primes, then the sieving situation is slightly more complicated because each prime {\em will} divide precisely one of the elements of $\mathcal{P}$, namely itself. However, if we choose $x \geq x_{0}^{2}$, and suppose without loss of generality (for given $x$) that we have $\#(\mathcal{B} \cap [0,x]) \geq \#(\mathcal{A} \cap [0,x])$, then
\[ (\mathcal{A} \cap [0,x]) + (\mathcal{B} \cap [\sqrt{x},x]) \subseteq {\cal P}\cap [\sqrt{x},2x] . \]
Then all prime factors $p < \sqrt{x}$ are forbidden to occur in the sumset, and our sieving arguments could give an upper bound on $\#(\mathcal{B} \cap [\sqrt{x},x])$. The reader may check that all the upper bounds we obtain (in section \ref{sec:ostmannrevisited}) are $\gg \sqrt{x}$, so we have the same bounds (up to a multiplicative constant) for $\#(\mathcal{B} \cap [0,x])$.

In summary, in the following sections we will work with finite sets $\mathcal{S}$, and will investigate the possibility of 
additive decompositions $\mathcal{S} = \mathcal{A}+\mathcal{B}$, 
rather than inclusions $\mathcal{A}+\mathcal{B} \subseteq \mathcal{S}$
or asymptotic decompositions $\mathcal{A}+\mathcal{B} \sim \mathcal{S}$. We will also ignore any issues of omitting elements smaller than $x_{0}$ or $\sqrt{x}$. The reader may convince themselves that, in view of the above discussion, our results and arguments can easily be tweaked to yield the theorems actually claimed in section \ref{sec:announcement}.

\section{A general machinery}{\label{sec:genmachinery}}

\subsection{A general additive irreducibility theorem}
In this section we will develop some general machinery for obtaining additive
irreducibility results, from which the specific theorems in 
section \ref{sec:announcement}
 will follow on inputting suitable information about the specific sets $\mathcal{S}$, such as the smooth numbers, under study, and (to deduce the ternary indecomposability results) on using Ruzsa's inequality.

More precisely, we shall establish the following main result:
\begin{theorem}{\label{genthm}}
Let $x$ be large, let ${\cal P}_0$ be a subset of the primes, and suppose there exists $x^{-1/10} < c \leq 1$ such that
$$ \sum_{y/2 < p \leq y, p \in {\cal P}_0} \log p \geq c \sum_{y/2 < p \leq y}
\log p, \quad \text{ whenever } x^{1/10} \leq y \leq x^{1/2}. $$

Suppose $\mathcal{S} \subseteq [1,x]$ is a non-empty ``target set'' of integers none of which is divisible by any $p \in {\cal P}_0$, and let $\sigma := (\#\mathcal{S})/x$ denote the density of $\mathcal{S}$. Also suppose $\mathcal{S}_{0} \subseteq \mathcal{S}$ is any non-empty set, and let $\sigma_{0}:= (\#\mathcal{S}_{0})/(\#\mathcal{S})$ denote the {\em relative density} of $\mathcal{S}_{0}$.

Finally set $K := 1000 (\sigma_{0} \sigma c^{2})^{-1} \log^{2}x$ and ${\cal P}_0^{*} := {\cal P}_0 \cap [K^{3},\infty)$, and suppose that {\em at least one} of the following holds:
\begin{enumerate}
\item (Sieve Controls Size) we have the lower bound
$$ \sum_{\substack{1 < q \leq \sqrt{x}, \\ p \mid q \Rightarrow p \in {\cal P}_0^{*}}}
\mu^2(q) \prod_{p \mid q} \dfrac{2}{p} \geq 10 (\sigma_{0} \sigma)^{-1} ; $$

\item (Bombieri--Vinogradov) there exists a positive integer $Q$ such that
$$ \sum_{\substack{1 < q \leq Q, \\ p \mid q \Rightarrow p \in {\cal P}_0^{*}}}
\frac{\mu^2(q)}{q} \geq 10 \sigma_{0}^{-1} , $$
and also
$$ \sum_{\substack{d \leq Q^{2}, \\ p \mid d \Rightarrow p \in {\cal P}_0^{*}}} \mu^{2}(d) \tau_{3}(d)^{1+(\log K)/(\log 3)} \max_{(a,d)=1} \left|\#\{s \in \mathcal{S}: s \equiv a \; \textrm{mod } d \} - \frac{\#\mathcal{S}}{\phi(d)} \right| \leq \frac{\#\mathcal{S}}{2K\sigma_{0}^{-1}}, $$
where $\mu(d)$ denotes the M\"{o}bius function, $\phi(d)$ denotes the Euler totient function, and $\tau_{3}(d) := \#\{(u,v,w) \in \N^{3}: uvw=d\}$.
\end{enumerate}

Then if there is a decomposition $\mathcal{S}_{0} = \mathcal{A} + \mathcal{B}$, for some sets $\mathcal{A}, \mathcal{B}$ of non-negative integers with $2 \leq \#\mathcal{A} \leq \#\mathcal{B}$, one has the upper bound $\#\mathcal{B} \ll (\sqrt{x} \log^{4}x)/c^{4}$, and therefore also the lower bound $\#\mathcal{A} \gg (\sqrt{x} \sigma_{0} \sigma c^{4})/\log^{4}x$.
\end{theorem}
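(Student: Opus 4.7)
The plan is an inverse-sieve argument: the larger $\#\mathcal{A}$ grows, the more residue classes $\mathcal{B}$ is forced to avoid modulo each $p \in \mathcal{P}_0$, so a sieve upper bound on $\#\mathcal{B}$ tightens; combined with the trivial product bound $\#\mathcal{A}\#\mathcal{B} \geq \#\mathcal{S}_{0} = \sigma_{0}\sigma x$, this should pin $\#\mathcal{B}$ near $\sqrt{x}$.

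Set $k := \#\mathcal{A}$. Since $\mathcal{A}+\mathcal{B} = \mathcal{S}_{0} \subseteq \mathcal{S}$ and no element of $\mathcal{S}$ is divisible by any $p \in \mathcal{P}_0$, for every $a\in\mathcal{A}$ and $p\in\mathcal{P}_0$ the residue $-a \bmod p$ is forbidden to $\mathcal{B}$, so $\omega_{\mathcal{B}}(p) \geq \nu_{\mathcal{A}}(p)$. The density hypothesis on $\mathcal{P}_0$ together with Chebyshev-type estimates gives $\sum_{y/2<p\leq y,\,p\in\mathcal{P}_0^{*}}\log p \gg cy$ in each dyadic window $(y/2,y]$ with $y\in[K^{3},\sqrt{x}]$, and the final assertion of Lemma \ref{InverseSieve} then yields
\[
\sum_{y/2<p\leq y,\,p\in\mathcal{P}_0^{*}}\frac{\nu_{\mathcal{A}}(p)}{p} \;\geq\; \frac{k}{2}\sum_{y/2<p\leq y,\,p\in\mathcal{P}_0^{*}}\frac{1}{p} \;\gg\; \frac{kc}{\log y},
\]
whenever $k\leq K$. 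This is the quantitative bridge from $k$ to the residue-class avoidance of $\mathcal{B}$.

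I then sieve the set $\mathcal{B}$. In Case~1, Montgomery's large sieve (Lemma \ref{lem:Montgomery}) with $Q=\sqrt{x}$ gives $\#\mathcal{B}\leq 2x/L$; restricting the defining sum to squarefree $q$ with all prime factors in $\mathcal{P}_0^{*}$ and noting that $\nu_{\mathcal{A}}(p)\geq 2$ at all but $O(\log x)$ primes, the Case~1 hypothesis $\sum \mu^{2}(q)\prod_{p\mid q}(2/p)\geq 10(\sigma_{0}\sigma)^{-1}$ delivers $L\gg (\sigma_{0}\sigma)^{-1}$, and the dyadic estimate above amplifies this by a further factor of order $k$ coming from the contributions $\omega_{\mathcal{B}}(p)/(p-\omega_{\mathcal{B}}(p))\gtrsim k/p$. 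In Case~2, I apply Selberg's sieve (Lemma \ref{lem:Selberg}) with $\mathcal{C}=\mathcal{S}$, $r_{i}=-a_{i}$ for the $a_{i}\in\mathcal{A}$, and $\omega_{\mathcal{B}}(p)=\nu_{\mathcal{A}}(p)$; writing $\#\{s\in\mathcal{S}:d\mid\prod_{i}(s+a_{i})\}$ as a sum over at most $\prod_{p\mid d}\nu_{\mathcal{A}}(p)\leq k^{\omega(d)}\leq \tau_{3}(d)^{(\log K)/\log 3}$ residue classes modulo $d$ calibrates the Selberg error against precisely the $\tau_{3}(d)^{1+(\log K)/\log 3}$ weight hypothesised in the Bombieri--Vinogradov condition, leaving a main term of size $\#\mathcal{S}/L$ with $L$ again amplified by $k$.

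Combining the sieve output $\#\mathcal{B}\ll x/L$ (or $\#\mathcal{S}/L$) with the trivial lower bound $\#\mathcal{B}\geq\#\mathcal{S}_{0}/k$ then pins $k$ into the regime $k\gtrsim\sqrt{x}\,\sigma_{0}\sigma c^{4}/\log^{4}x$, whence $\#\mathcal{B} = \#\mathcal{S}_{0}/k \ll \sqrt{x}\log^{4}x/c^{4}$, and the lower bound on $\#\mathcal{A}$ follows immediately from $\#\mathcal{A}\#\mathcal{B}\geq \#\mathcal{S}_{0}$. The main obstacle I anticipate is ensuring that $L$ gains an amplification of order $k$ rather than merely a constant: this demands summing contributions across many dyadic windows of primes in $\mathcal{P}_{0}^{*}$ via the inverse sieve, and in Case~2 relies on the exponent $1+(\log K)/\log 3$ in the Bombieri--Vinogradov hypothesis matching exactly the $\tau_{3}(d)\cdot k^{\omega(d)}$ weight produced in the Selberg error. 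A secondary subtlety is the regime $k>K$, where the inverse sieve fails on the smallest windows; one must then restrict to moduli whose prime factors exceed $\sim k\log x/c$, at a corresponding cost in the effective $Q$ entering the Montgomery bound.
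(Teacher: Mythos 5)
Your overall strategy matches the paper: inverse-sieve amplification via Gallagher's larger sieve (Lemma \ref{InverseSieve}) to lift information from $\#\mathcal{A}=k$ to the number of classes avoided by $\mathcal{B}$, then Montgomery's large sieve in Case~1 and Selberg's sieve in Case~2 to bound $\#\mathcal{B}$, all pitted against $\#\mathcal{A}\,\#\mathcal{B}\geq\#\mathcal{S}_0$. Your identification of the Selberg error calibration against $\tau_3(d)^{1+(\log K)/\log 3}$ is also on target. But your proposed resolution of the large-$k$ regime has a genuine gap, and it is precisely the step where the actual proof does something different.

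You acknowledge that Lemma \ref{InverseSieve} requires $\sum_{y/2<p\leq y,\,p\in\mathcal{P}_0}\log p\geq 8k\log x$ and that this fails in the smallest windows once $k>K$, and you propose fixing it by ``restricting to moduli whose prime factors exceed $\sim k\log x/c$.'' This does not work. The final answer forces $k$ up to roughly $\sqrt{x}\,\sigma_0\sigma c^4/\log^4 x$, and in that range the threshold $k\log x/c$ is already of size close to $\sqrt{x}$: there is no room in $[k\log x/c,\sqrt{x}]$ to place two disjoint dyadic windows $y_1<y_2/2<y_2<\sqrt{x}/y_1$ whose pairwise products stay below $Q=\sqrt{x}$, and using single primes rather than products costs you a factor of $k$ in the denominator that you cannot afford (you need $k^2$, not $k$, for the sieve output to beat $\#\mathcal{S}_0/k$ and eventually yield the $\sqrt{x}\log^4 x/c^4$ bound). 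The paper's actual device (Proposition \ref{middlek2}) is orthogonal to yours: rather than restricting to larger primes, it restricts to a sub\emph{set} of the shifts $a_1,\dotsc,a_{k_0}$ of size $k_0\asymp\min\{k,\,c\,x^{1/4}/\log x\}$, small enough that the inverse sieve applies in windows around $x^{1/4}$, and trivially $\#\{s:s\not\equiv a_i\ (i\leq k)\}\leq\#\{s:s\not\equiv a_i\ (i\leq k_0)\}$. Since the resulting sieve denominator still carries a $k_0^2$, the bound $\ll x\log^2 x/(k_0^2 c^2)\ll\sqrt{x}\log^4 x/c^4$ follows.

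A secondary issue is your concluding sentence, which asserts $\#\mathcal{B}=\#\mathcal{S}_0/k$ after pinning $k$. One only has $\#\mathcal{B}\geq\#\mathcal{S}_0/k$, so a lower bound on $k$ gives no upper bound on $\#\mathcal{B}$. The paper derives the upper bound on $\#\mathcal{B}$ directly from the sieve estimate in the $k>K$ regime (the $M$-maximum in Proposition \ref{middlek2}): the $1/k^2$ branch would force $k\leq 128(\sigma_0\sigma c^2)^{-1}\log^2 x<K$, a contradiction, so the other branch of $M$ must hold and gives $\#\mathcal{B}\ll\sqrt{x}\log^4 x/c^4$ outright; the lower bound on $\#\mathcal{A}$ then follows from $\#\mathcal{A}\,\#\mathcal{B}\geq\#\mathcal{S}_0$. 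You should repair your write-up at both points.
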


We remark that the reader will not lose much by thinking of Theorem \ref{genthm} in the simpler case where $\mathcal{S}_{0} = \mathcal{S}$, and therefore $\sigma_{0} = 1$. (And for simplicity we will assume this in all of our applications.) We allow ourselves the possibility of handling a subset $\mathcal{S}_{0}$ because, as discussed in section \ref{sec:finiteinfinite}, that is the situation that arises when carefully turning asymptotic irreducibility questions into finitary questions.

\vspace{12pt}
Note that the lower bound demanded in the ``Sieve Controls Size'' condition
involves a factor $(\sigma_{0} \sigma)^{-1}$, whereas the lower bound demanded in the
``Bombieri--Vinogradov condition'' only involves the smaller quantity $\sigma_{0}^{-1}$ (but in the Bombieri--Vinogradov
condition one also needs an upper bound for another sum, as in part 2 of the
theorem above). The ``Sieve Controls Size'' condition implies that one can more-or-less extract $\mathcal{S}_{0}$ by sieving the integers less than $x$ by the primes $p \in {\cal P}_0^{*}$, and ultimately that one can run a sieve process {\em on all those integers} to bound $\#\mathcal{A}$ and $\#\mathcal{B}$. If the ``Bombieri--Vinogradov condition'' is satisfied then one may not be able to succeed using a sieve process on $[1,x]$, but instead one can run a sieve process {\em inside $\mathcal{S}$}, using Selberg's sieve and other ingredients, to bound $\#\mathcal{A}$ and $\#\mathcal{B}$.

As we will see in 
sections \ref{sec:proofsofthms} -- \ref{sec:proofsprimefactors}, 
Theorem \ref{genthm} can be applied to sets of smooth numbers because they satisfy the ``Bombieri--Vinogradov condition'', and it can be applied to semigroup-type sets $\mathcal{S}$, whose elements are generated multiplicatively by a dense subsequence of the primes, because they satisfy the ``Sieve Controls Size'' condition. One could also use Theorem \ref{genthm} to study the primes themselves, which satisfy both conditions.

We will explain and prove Theorem \ref{genthm} in the next subsection, on the assumption of
some sieve theory type propositions, and then in 
section {\ref{subsec:proofsofprop}} we will prove those propositions. We hope that establishing a general result will clarify, as well as extend, the approach of previous papers\footnote{The previous work all roughly corresponds to various special applications of the ``Sieve Controls Size'' part of Theorem \ref{genthm}.} such as~\cite{Elsholtz:2001mathematika}.

\subsection{Outline proof of Theorem \ref{genthm}}
Suppose there is an additive decomposition $\mathcal{S}_{0} = \mathcal{A} + \mathcal{B}$, where $2 \leq \#\mathcal{A} \leq \#\mathcal{B}$. If we let $k:= \#\mathcal{A}$, then since $\#(\mathcal{A} + \mathcal{B}) \leq k \cdot \#\mathcal{B}$ we must certainly have
$$ \#\mathcal{B} \geq \frac{\#\mathcal{S}_{0}}{k} = \frac{\sigma_{0} \sigma x}{k}. $$
On the other hand, if the elements of $\mathcal{S}_{0} \subseteq \mathcal{S}$ are not divisible by certain primes then, for given $\mathcal{A}$, there will be congruence constraints modulo those primes on the possible elements of $\mathcal{B}$. We will use various sieve arguments to place upper bounds on $\#\mathcal{B}$, which will contradict the lower bound $\#\mathcal{B} \geq (\#\mathcal{S}_{0})/k $ unless $k$ and $\#\mathcal{B}$ are both ``fairly close'' to $\sqrt{x}$ (as asserted in Theorem \ref{genthm}).

\vspace{12pt}
In all of the results stated in this subsection, $\mathcal{S} \subseteq [1,x]$, ${\cal P}_0$ and ${\cal P}_0^{*} = {\cal P}_0 \cap [K^{3},\infty)$ will denote subsets of the integers and the primes, as in the statement of Theorem \ref{genthm}, and $0 \leq a_{1} < a_{2} < \dotsc < a_{k} \leq x$ will denote any distinct integers.

\vspace{12pt}
Firstly we present two results, corresponding to the ``Sieve Controls Size'' condition and the ``Bombieri--Vinogradov condition'', that can be useful when $k$ is assumed to be fairly small:
\begin{proposition}{\label{smallkscs}}
For any $2 \leq k \leq K =1000 (\sigma_{0} \sigma c^{2})^{-1} \log^{2}x$ we have the upper bound
$$ \#\{s \in \mathcal{S}: s \not\equiv a_{i} \; \textrm{ mod } p, 
\quad 1 \leq i \leq k, \; p \in {\cal P}_0^{*}\} \leq \frac{4x}{(k/2) \sum_{\substack{1 < q \leq \sqrt{x}, \\ p \mid q \Rightarrow p \in {\cal P}_0^{*}}}
\mu^2(q) \prod_{p \mid q} \dfrac{2}{p} }. $$
\end{proposition}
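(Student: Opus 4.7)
The plan is to apply Montgomery's large sieve (Lemma \ref{lem:Montgomery}) to the integers $n \in [1, x]$ that avoid the residue classes $\{0, a_1, \ldots, a_k\}$ modulo every prime $p \in \mathcal{P}_0^*$. Since $\mathcal{S}$ consists of integers not divisible by any prime in $\mathcal{P}_0 \supseteq \mathcal{P}_0^*$, the set in the proposition is contained in this sifted set, so any upper bound on the sifted set will do. Setting $\omega_{\mathcal B}(p)$ equal to the number of distinct residues of $\{0, a_1, \ldots, a_k\}$ modulo $p$, I observe that $\omega_{\mathcal B}(p) \leq k+1 \leq p/2$ for $p \in \mathcal{P}_0^*$ (since $p \geq K^3 \geq k^3$), so $\omega_{\mathcal B}(p)/(p-\omega_{\mathcal B}(p)) \geq \omega_{\mathcal B}(p)/p$. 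Choosing $Q = \sqrt{x}$ then reduces the task to showing $L \geq (k/4)\, T$, where $L = \sum_{q \leq \sqrt{x},\, p \mid q \Rightarrow p \in \mathcal{P}_0^*} \mu^2(q) \prod_{p \mid q} \omega_{\mathcal B}(p)/p$ and $T = \sum_{1 < q \leq \sqrt{x},\, p \mid q \Rightarrow p \in \mathcal{P}_0^*} \mu^2(q) \prod_{p \mid q} 2/p$.

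To introduce the factor of $k/2$ I would invoke Lemma \ref{InverseSieve}, applied to $\mathcal{A} = \{a_1, \ldots, a_k\}$ on each dyadic block $(y/2, y]$ with $2K^3 \leq y \leq \sqrt{x}$. Its hypothesis $\sum_{y/2 < p \leq y,\, p \in \mathcal{P}_0^*} \log p \geq 8k \log x$ is a consequence of the Theorem \ref{genthm} assumption (which combined with Chebyshev yields a lower bound $\gtrsim cy$ for the sum over $p \in \mathcal{P}_0$), of $\mathcal{P}_0^* \cap (y/2, y] = \mathcal{P}_0 \cap (y/2, y]$ when $y \geq 2K^3$, and of the size estimates $k \leq K$ and $y \geq K^3 \gg k \log x / c$. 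Since $\omega_{\mathcal B}(p) \geq \nu_{\mathcal A}(p)$, Lemma \ref{InverseSieve} then delivers the dyadic bound $\sum_{y/2 < p \leq y,\, p \in \mathcal{P}_0^*} \omega_{\mathcal B}(p)/p \geq (k/2) \sum_{y/2 < p \leq y,\, p \in \mathcal{P}_0^*} 1/p$ for all such $y$.

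The main obstacle is to promote this averaged statement about primes to the required multiplicative lower bound on $L$. My intended route is to decompose each squarefree $q > 1$ according to its largest prime factor $p^*$, write $q = p^* q'$ with all prime factors of $q'$ below $p^*$, and rearrange $L$ as a double sum over $p^*$ and $q'$. For all $p \in \mathcal{P}_0^*$ outside the $O(\log x)$ primes dividing $\gcd(a_1, \ldots, a_k)$, one has $\omega_{\mathcal B}(p) \geq 2$, so the inner sum over $q'$ is bounded below by a constant multiple of the corresponding sum of $\prod 2/p$; the dyadic inverse-sieve bound on the outer sum over $p^*$ then supplies the factor $k/2$. The delicate bookkeeping, which I expect to be the hardest step, is to check that restricting each $q'$ to have prime factors below its chosen $p^*$ does not discard more than a bounded constant fraction of the target $T$ once one sums across the full dyadic range of $p^*$, and that the $O(\log x)$ ``bad'' primes where $\omega_{\mathcal B}(p) = 1$ contribute negligibly to both the outer sum and the reconstruction of $T$.
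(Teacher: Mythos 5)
Your opening moves are right and agree with the paper: apply Montgomery's large sieve with $Q=\sqrt{x}$, drop from $\omega_{\mathcal B}(p)/(p-\omega_{\mathcal B}(p))$ to $\omega_{\mathcal B}(p)/p$, and reduce the proposition to a lower bound $L \geq (k/4)\,T$ for the sieve denominator. You have also correctly identified that the inverse sieve (Lemma \ref{InverseSieve}) is the source of the factor $k/2$, applied on dyadic blocks. The difficulty is precisely the step you yourself flag as unresolved: passing from a dyadic-average lower bound on $\sum \omega_{\mathcal B}(p)/p$ to a lower bound on the truncated squarefree sum $L$.

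That ``promotion'' step is not a routine bookkeeping exercise, and your proposed largest-prime-factor decomposition does not close it. Writing $q = p^*q'$ with $P^+(q') < p^*$ and restricting to a dyadic block $(2^{j-1},2^j]$ for $p^*$, the inner sum over $q'$ in $L$ uses primes $< 2^{j-1}$ and the cutoff $q'\leq \sqrt{x}/2^j$, while the corresponding inner sum for $T$ uses primes $< 2^j$ and $q' \leq \sqrt{x}/2^{j-1}$. Both the prime set and the cutoff differ, and the ratio of these truncated sums is not controlled by the inverse sieve information; one cannot simply ``reconstruct $T$'' after summing over blocks. Moreover, your justification that the inverse sieve hypothesis $\sum_{y/2 < p \leq y,\,p\in\mathcal{P}_0^*}\log p \geq 8k\log x$ holds for \emph{all} dyadic $y\in[2K^3,\sqrt{x}]$ is not correct: the density assumption in Theorem \ref{genthm} only gives $\sum_{y/2<p\leq y,\,p\in\mathcal{P}_0}\log p \geq c\sum_{y/2<p\leq y}\log p$ for $y\geq x^{1/10}$, and in general $K^3$ can be well below $x^{1/10}$, leaving a range of dyadic blocks where you cannot verify the hypothesis (and indeed where $\mathcal{P}_0^*$ could be empty). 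The paper sidesteps this entirely by using only the unconditional first assertion of Lemma \ref{InverseSieve} (with the explicit error term), which needs no density hypothesis.

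The missing idea is the general comparison inequality for truncated sums of non-negative completely multiplicative functions, stated as $(**)$ in the paper and proven via the Dirichlet convolution $g = f * h$: if $0\leq f(p)\leq g(p)<p$ for all $p$, then
$$ \sum_{n \leq X} \frac{f(n)}{n} \;\geq\; \prod_{p \leq X} \left(1-\frac{g(p)}{p}\right)\left(1-\frac{f(p)}{p}\right)^{-1} \sum_{n \leq X} \frac{g(n)}{n}. $$
The paper first lower bounds $L$ by $\sum_{n\leq\sqrt{x}} f(n)/n$ (expanding each $1/(p-\nu(p))$ as a geometric series, a step that is tighter than your $\omega(p)/(p-\omega(p))\geq\omega(p)/p$ but plays the same role), then applies $(**)$ with $f(p)=\nu(p)\mathbf{1}_{p\in\mathcal{P}_0^*}$ and $g(p)=k\mathbf{1}_{p\in\mathcal{P}_0^*}$, which reduces everything to showing the Euler-product correction factor $\prod_{p\in\mathcal{P}_0^*,\,p\leq\sqrt{x}}\bigl(1-\tfrac{k-\nu(p)}{p-\nu(p)}\bigr)$ is $\geq 1/2$. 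That bound follows directly from summing the error-term form of the inverse sieve over dyadic blocks starting at $K^3$, since the accumulated error is $\asymp (k^2\log x)/(K^3\log K^3)$, which is tiny because $K\geq 1000\log^2 x$. Finally the factor $k/2$ is extracted trivially from $\sum_{n\leq\sqrt{x}} g(n)/n$ by noting that each squarefree $q>1$ in that sum contributes at least one factor $k=(k/2)\cdot 2$. Without the inequality $(**)$ (or an equally clean substitute) to convert the Euler-product comparison into a comparison of truncated sums, your argument does not go through.
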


\begin{proposition}{\label{smallkbv}}
For any $2 \leq k \leq K =1000 (\sigma_{0} \sigma c^{2})^{-1} \log^{2}x$, and any positive integer parameter $Q$, $\#\{s \in
\mathcal{S}: s \not\equiv a_{i} \; \textrm{ mod } p, \quad 1 \leq i \leq k, \; p \in {\cal P}_0^{*}\}$ is at most
$$ \frac{2\#\mathcal{S}}{(k-1) \sum_{\substack{1 < q \leq Q, \\ p \mid q \Rightarrow p \in {\cal P}_0^{*}}}
\frac{\mu^2(q)}{q} } + \sum_{\substack{d \leq Q^{2}, \\ p \mid d \Rightarrow p \in {\cal P}_0^{*}}} \mu^{2}(d) \tau_{3}(d)^{1+\frac{\log k}{\log 3}} \max_{(a,d)=1} \left|\#\{s \in \mathcal{S}: s \equiv a \; \textrm{mod } d \} - \frac{\#\mathcal{S}}{\phi(d)} \right|. $$
\end{proposition}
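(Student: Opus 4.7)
My plan is to apply Selberg's sieve (Lemma \ref{lem:Selberg}) with $\mathcal{C} = \mathcal{S}$, forbidden residues $r_i = a_i$, sieving primes $\mathcal{P}_0^*$, and weights $\omega_\mathcal{B}(p) := \nu(p) \cdot p/(p-1)$, where $\nu(p)$ denotes the number of distinct residues of $\{a_1, \dotsc, a_k\}$ modulo $p$. This slightly unusual choice (rather than $\nu(p)$ itself) is tailored so that the Selberg ``expected'' main term $\#\mathcal{S} \prod_{p \mid d} \omega_\mathcal{B}(p)/p = \#\mathcal{S} \prod_{p\mid d} \nu(p)/\phi(d)$ matches the true count of $c \in \mathcal{S}$ with $d \mid \prod_i(c-a_i)$, broken into its constituent reduced residue classes modulo $d$, eliminating any secondary discrepancy. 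Lemma \ref{lem:Selberg} then yields a bound $\#\mathcal{S}/L + R_{\text{Sel}}$ with $L = \sum_{q \le Q,\, p\mid q \Rightarrow p \in \mathcal{P}_0^*} \mu^2(q) \prod_{p\mid q} \nu(p)/(p-1-\nu(p))$.

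For the remainder, I will use that for squarefree $d$ with prime factors in $\mathcal{P}_0^*$, the condition $d \mid (c-a_1)\cdots(c-a_k)$ is equivalent by CRT to $c$ lying in one of exactly $\prod_{p\mid d} \nu(p)$ residue classes modulo $d$, each coprime to $d$ because elements of $\mathcal{S}$ avoid the primes of $\mathcal{P}_0$. The Selberg discrepancy at $d$ then collapses to $\bigl| \sum_{a \text{ hit}} \bigl(\#\{s \in \mathcal{S}: s \equiv a \bmod d\} - \#\mathcal{S}/\phi(d)\bigr) \bigr|$, which the triangle inequality bounds by $\prod_{p\mid d}\nu(p) \cdot \max_{(a,d)=1} |\#\{s \equiv a \bmod d\} - \#\mathcal{S}/\phi(d)|$. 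Applying $\prod_{p\mid d}\nu(p) \le k^{\omega(d)} = \tau_3(d)^{\log k / \log 3}$ (valid for squarefree $d$) delivers the exact remainder shape in the proposition.

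For the main term, I will call $p \in \mathcal{P}_0^*$ \emph{good} if $\nu(p) \ge k-1$ and \emph{bad} otherwise. The first assertion of Lemma \ref{InverseSieve}, applied dyadically on $y \in [K^3, Q]$ (its hypothesis $\sum_{y/2 < p \le y,\, p \in \mathcal{P}_0^*} \log p \ge 8k \log x$ follows from the density assumption of Theorem \ref{genthm} combined with $k \le K$), bounds $\sum_{p \in \mathcal{P}_0^*,\, p \le Q} (k - \nu(p))/p = o(1)$. Since bad primes contribute at least $2/p$ to this sum, $\sum_{\text{bad}} 1/p = o(1)$; decomposing each $q$ as $q = q_G q_B$ with $q_G, q_B$ supported on good and bad primes then yields $\sum_{1 < q \le Q,\, \text{primes good}} \mu^2(q)/q \ge (1-o(1)) \sum_{1 < q \le Q,\, p\mid q \Rightarrow p \in \mathcal{P}_0^*} \mu^2(q)/q$. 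Furthermore, for good $q > 1$, the elementary inequality $\nu(p)/(p-1-\nu(p)) \ge (k-1)/p$ (valid whenever $\nu(p) \ge k-1$) gives $\prod_{p \mid q} \nu(p)/(p-1-\nu(p)) \ge (k-1)^{\omega(q)}/q \ge (k-1)/q$; restricting the sum defining $L$ to good $q > 1$ and combining the two estimates yields $L \ge \tfrac{k-1}{2} \sum_{1 < q \le Q,\, p\mid q \Rightarrow p \in \mathcal{P}_0^*} \mu^2(q)/q$, exactly what the proposition requires. I expect the main obstacle to lie in this last step: making the quantitative control of bad primes via Lemma \ref{InverseSieve} work out cleanly requires the delicate tuning between the cutoff $p \ge K^3$, the explicit constant $K = 1000(\sigma_0\sigma c^2)^{-1}\log^2 x$, and the density hypothesis built into Theorem \ref{genthm}.
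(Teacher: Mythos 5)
Your proposal uses the same core strategy as the paper: apply Selberg's sieve (Lemma~\ref{lem:Selberg}) with $\omega_{\mathcal{B}}(p) = p\nu(p)/(p-1)$ so that the expected count aligns with $\#\mathcal{S}/\phi(d)$ on reduced residue classes, then bound the remainder via CRT together with the crude estimate $\prod_{p\mid d}\nu(p) \leq k^{\omega(d)} = \tau_3(d)^{(\log k)/(\log 3)}$, and the main term via an inverse-sieve argument. One point you should make explicit: for the collapse of the discrepancy to a sum over reduced residue classes, it is essential that $\nu(p)$ count only the \emph{nonzero} residue classes hit by $a_1,\dotsc,a_k$ --- the paper flags this as ``a slightly different definition than previously.'' If $\nu(p)$ counted the zero class when some $a_i \equiv 0 \bmod p$, then $\#\mathcal{S}\prod_{p\mid d}\nu(p)/\phi(d)$ would overshoot the true count and the discrepancy would pick up a piece of size $\asymp \#\mathcal{S}/\phi(d)$ that $\max_{(a,d)=1}|\cdots|$ cannot absorb. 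Your remark that the hit classes are ``each coprime to $d$'' suggests you had the right count in mind, but ``the number of distinct residues'' as written is ambiguous.

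For the main term you take a genuinely different route. You split primes into good ($\nu(p)\geq k-1$) and bad, use Lemma~\ref{InverseSieve} to show $\sum_{\text{bad}}1/p = o(1)$, deduce that restricting $L$ to $q$ built from good primes loses only a $(1-o(1))$ factor, and then apply the pointwise bound $\nu(p)/(p-1-\nu(p)) \geq (k-1)/p$ on good primes to get $L \geq \tfrac{k-1}{2}\sum_{1<q\leq Q}\mu^2(q)/q$. The paper instead passes from $L$ to $\sum_{n\leq Q}f(n)/n$ with $f$ completely multiplicative, $f(p)=\nu(p)\mathbf{1}_{p\in\mathcal{P}_0^*}$, and invokes the general comparison inequality $(**)$ against the completely multiplicative $g$ with $g(p)\in\{k-1,k\}$; the resulting Euler-product correction factor is then shown to be $\geq 1/2$, again via Lemma~\ref{InverseSieve}. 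Both routes work; yours is more hands-on, the paper's is a one-line appeal to $(**)$. Finally, a small correction to your exposition: the first assertion of Lemma~\ref{InverseSieve}, with the explicit error $(k^2-k)\log x/((y/2)\log(y/2))$, is \emph{unconditional} --- the hypothesis $\sum_{y/2<p\leq y,\,p\in\mathcal{P}_0^*}\log p \geq 8k\log x$ is needed only for the lemma's ``In particular'' clause. No appeal to the density hypothesis of Theorem~\ref{genthm} is required at this step; the smallness of $\sum_{p\geq K^3,\,p\in\mathcal{P}_0^*}(k-\nu(p))/p$ follows simply from $p\geq K^3$, $k\leq K$, and $K \geq 1000\log^2 x$.
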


At first glance it might seem that the case when $k$ is small should not be
difficult to handle. However, the situation is a little delicate, since there
(probably) {\em are} sets $\mathcal{S}$, very like the ones that we shall
study, that admit decompositions as $\mathcal{A} + \mathcal{B}$ with
$\#\mathcal{A} \geq 2$ small. For example, it is conjectured that the twin
primes constitute roughly a proportion $1/\log x$ of all the primes, and
they clearly admit a decomposition with $\mathcal{A} = \{0,2\}$. 
Also, the quadruples of primes $\{10k+1,3,7,9\}$ even admit a
ternary decomposition
$\{1,3\}+\{0,6\}+{\mathcal C}$. Here ${\mathcal C}$ is expected to be infinite.
Thus some work is really required, as in the above propositions, to know that we can perform useful sieving moves to handle small $k$ and get our argument started.

\vspace{12pt}
The next result will provide our main estimates when $k$ is larger: however, we state it only as a lemma in the first instance, since it will be incorporated into a slightly more general proposition immediately below.
\begin{lemma}{\label{middlek}}
Let $k$ be any natural number, and suppose that 
$$y_{1} < \frac{y_{2}}{2} <y_2 < \frac{\sqrt{x}}{y_{1}}$$ 
are large and are such that
$$ \sum_{y_{1}/2 < p \leq y_{1}, p \in {\cal P}_0} \log p \geq 8k \log x \;\;\; \textrm{ and } \;\;\; \sum_{y_{2}/2 < p \leq y_{2}, p \in {\cal P}_0} \log p \geq 8k \log x. $$
Then $\#\{s \in \mathcal{S}: s \not\equiv a_{i} \; \textrm{ mod } p, \quad 1 \leq i \leq k, \; p \in {\cal P}_0\}$ is 
$$ \leq \frac{128 x \log y_{1} \log y_{2}}{k^{2}} \cdot \frac{\sum_{y_{1}/2 < p \leq y_{1}} \log p}{\sum_{y_{1}/2 < p \leq y_{1}, p \in {\cal P}_0} \log p} \cdot \frac{\sum_{y_{2}/2 < p \leq y_{2}} \log p}{\sum_{y_{2}/2 < p \leq y_{2}, p \in {\cal P}_0} \log p}. $$
\end{lemma}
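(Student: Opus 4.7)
The plan is to apply Montgomery's large sieve (Lemma~\ref{lem:Montgomery}) with level $Q=y_1 y_2$, retaining in the outer sum $L$ only the squarefree moduli of the form $q=p_1 p_2$ with $p_1\in\mathcal{P}_0\cap(y_1/2,y_1]$ and $p_2\in\mathcal{P}_0\cap(y_2/2,y_2]$. Since $y_1<y_2/2$ the two dyadic ranges are disjoint, so such $q$ really are squarefree with $q\leq y_1 y_2=Q$, and the hypothesis $y_2<\sqrt{x}/y_1$ gives $Q^2\leq x$. Hence Montgomery produces
\[
\#\bigl\{s\in\mathcal{S}: s\not\equiv a_i\bmod p,\ 1\leq i\leq k,\ p\in\mathcal{P}_0\bigr\}\;\leq\;\frac{x+Q^2}{L}\;\leq\;\frac{2x}{L}.
\]
The sifted set avoids $\nu(p):=\#\{a_i\bmod p:1\leq i\leq k\}$ residues modulo each $p\in\mathcal{P}_0$, so choosing $\omega_{\mathcal{B}}(p)=\nu(p)$ and using the trivial inequality $\omega/(p-\omega)\geq\omega/p$ factorises the retained part of $L$ as
\[
L\;\geq\;\Biggl(\sum_{p_1\in\mathcal{P}_0\cap(y_1/2,y_1]}\frac{\nu(p_1)}{p_1}\Biggr)\Biggl(\sum_{p_2\in\mathcal{P}_0\cap(y_2/2,y_2]}\frac{\nu(p_2)}{p_2}\Biggr).
\]

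The second step is to invoke the ``in particular'' clause of Lemma~\ref{InverseSieve} in each of the two dyadic prime ranges, applied to the set $\{a_1,\ldots,a_k\}$ (after a harmless shift into $[1,x]$ if $a_1=0$); the assumed lower bounds $\sum_{p\in\mathcal{P}_0\cap(y_i/2,y_i]}\log p\geq 8k\log x$ are precisely what that lemma requires to deliver
\[
\sum_{p\in\mathcal{P}_0\cap(y_i/2,y_i]}\frac{\nu(p)}{p}\;\geq\;\frac{k}{2}\sum_{p\in\mathcal{P}_0\cap(y_i/2,y_i]}\frac{1}{p}\qquad(i=1,2).
\]
Setting $T_i:=\sum_{p\in\mathcal{P}_0\cap(y_i/2,y_i]}\log p$, the elementary dyadic comparisons $1/p\geq 1/y_i$ and $\log p\leq\log y_i$ on $(y_i/2,y_i]$ combine to give $\sum_{p}1/p\geq T_i/(y_i\log y_i)$. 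Feeding this back into $L$ produces $L\geq k^2 T_1 T_2/(4 y_1 y_2\log y_1\log y_2)$ and therefore
\[
\#\{\cdots\}\;\leq\;\frac{2x}{L}\;\leq\;\frac{8\,x\,y_1 y_2\log y_1\log y_2}{k^2 T_1 T_2}.
\]

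Finally, writing $S_i:=\sum_{p\in(y_i/2,y_i]}\log p$ for the \emph{unrestricted} $\log$-weight of the dyadic block, I would multiply through by $(S_1/S_1)(S_2/S_2)$ to re-express the bound as
\[
\frac{8\,x\log y_1\log y_2}{k^2}\cdot\frac{y_1 y_2}{S_1 S_2}\cdot\frac{S_1}{T_1}\cdot\frac{S_2}{T_2},
\]
and then use a Chebyshev (or PNT) lower bound such as $S_i\geq y_i/4$ for large $y_i$ to absorb $y_1 y_2/(S_1 S_2)$ into the constant $16$, converting the leading factor $8$ into $128$ and yielding the stated estimate. The main technical obstacle is precisely this bridging step: the lemma is stated in terms of $\log p$-ratios but the sieve inputs give $1/p$-sums, so one must invest the dyadic identification $\log p\asymp\log y_i$ together with a numerical Chebyshev estimate in order to produce the clean form of the conclusion with the declared constant.
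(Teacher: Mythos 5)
Your argument is correct and is essentially the same as the paper's. Both use Montgomery's large sieve (you take $Q=y_1y_2\leq\sqrt{x}$, the paper takes $Q=\sqrt{x}$, but in either case $x+Q^2\leq 2x$ and only the products $q=p_1p_2$ from the two dyadic blocks are retained in $L$), then factor $L$, invoke the ``in particular'' part of Lemma~\ref{InverseSieve} in each block, and convert the resulting $\sum\nu(p)/p$ lower bounds into $\log p$-weighted ratios using the dyadic comparisons $p\leq y_i$, $\log p\leq\log y_i$ together with a Chebyshev/Mertens estimate to account for the constant $128$; your $S_i\geq y_i/4$ is interchangeable with the paper's $\sum_{y_i/2<p\leq y_i}(\log p)/p\geq 1/2$.
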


The slight problem with Lemma \ref{middlek} is that if $k$ is very large, e.g. larger than about $x^{1/4}$, then it is not possible to find values $y_{1},y_{2}$ satisfying the hypotheses. However, since the bound in the lemma decays quadratically with the number of values $a_{i}$ we can still obtain strong information if we artificially reduce the value of $k$, i.e. only work with a subset of the $a_{i}$. The following result, which also includes Lemma \ref{middlek}, records the kind of bound obtained:
\begin{proposition}{\label{middlek2}}
Suppose that $y_{1} < y_{2}/2 < y_2 < 
\sqrt{x}/y_{1}$ are large and are such that
$$ \sum_{y_{1}/2 < p \leq y_{1}, p \in {\cal P}_0} \log p \geq 8 \log x \;\;\; \textrm{ and } \;\;\; \sum_{y_{2}/2 < p \leq y_{2}, p \in {\cal P}_0} \log p \geq 8 \log x, $$
and let $k$ be arbitrary. Then $\#\{s \in \mathcal{S}: s \not\equiv a_{i} \; \textrm{ mod } p, \quad 1 \leq i \leq k, \; p \in {\cal P}_0\}$ is
$$ \leq 128 x M \log y_{1} \log y_{2} \cdot \frac{\sum_{y_{1}/2 < p \leq y_{1}} \log p}{\sum_{y_{1}/2 < p \leq y_{1}, p \in {\cal P}_0} \log p} \cdot \frac{\sum_{y_{2}/2 < p \leq y_{2}} \log p}{\sum_{y_{2}/2 < p \leq y_{2}, p \in {\cal P}_0} \log p}, $$
where
$$ M = M(y_{1},y_{2},x,k,{\cal P}_0) := \max \left\{\frac{1}{k^{2}}, \frac{256 \log^{2}x}{(\sum_{y_{1}/2 < p \leq y_{1}, p \in {\cal P}_0} \log p)^{2}},\frac{256 \log^{2}x}{(\sum_{y_{2}/2 < p \leq y_{2}, p \in {\cal P}_0} \log p)^{2}} \right\}. $$
\end{proposition}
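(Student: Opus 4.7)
The plan is to deduce Proposition \ref{middlek2} from Lemma \ref{middlek} by truncating the list of forbidden residues to a sub-collection for which the stronger hypothesis of the lemma is satisfied. The key observation is that sifting by fewer residues can only enlarge the set, so for any natural number $k'' \leq k$ we have
\[
\#\{s \in \mathcal{S}: s \not\equiv a_i \bmod p, \; 1 \leq i \leq k, \; p \in \mathcal{P}_0\} \leq \#\{s \in \mathcal{S}: s \not\equiv a_i \bmod p, \; 1 \leq i \leq k'', \; p \in \mathcal{P}_0\}.
\]
Thus we are free to pass to any smaller $k''$, as long as it is still large enough to give a useful bound.

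Write $S_j := \sum_{y_j/2 < p \leq y_j,\, p \in \mathcal{P}_0} \log p$ for $j = 1,2$, and set $k_j := \lfloor S_j / (8 \log x) \rfloor$ and $k'' := \min(k, k_1, k_2)$. The hypothesis $S_j \geq 8 \log x$ ensures $k_j \geq 1$, so $k'' \geq 1$; and by the definition of $k_j$ we have $8 k'' \log x \leq S_j$ for $j=1,2$, which is exactly the hypothesis required to invoke Lemma \ref{middlek} with $k''$ in place of $k$ (applied to the first $k''$ of the $a_i$). The lemma then yields precisely the bound claimed in the proposition, but with the factor $1/(k'')^2$ in place of $M$.

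It then suffices to check that $1/(k'')^2 \leq M$. Since $k'' = \min(k, k_1, k_2)$ we have $1/(k'')^2 = \max(1/k^2, 1/k_1^2, 1/k_2^2)$, so comparing with the definition of $M$ one only has to show that $k_j \geq S_j/(16 \log x)$. This is where the constant $256$ (rather than $64$) in $M$ comes in: it provides exactly the slack needed to absorb the loss in the floor function. One handles the cases $S_j \geq 16 \log x$ (where $\lfloor S_j/(8\log x)\rfloor \geq S_j/(8 \log x) - 1 \geq S_j/(16\log x)$) and $8 \log x \leq S_j < 16 \log x$ (where $k_j = 1 \geq S_j/(16 \log x)$) separately.

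There is no real obstacle in this argument; it is essentially bookkeeping. The only step requiring a little care is verifying that the numerical constant in $M$ is chosen to accommodate the rounding in the floor function, as outlined above.
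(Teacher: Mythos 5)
Your proof is correct and is essentially the paper's own argument: define $k_0 = \min\{k,\lfloor S_1/(8\log x)\rfloor,\lfloor S_2/(8\log x)\rfloor\}$, use monotonicity of the sieve count under truncating the $a_i$, apply Lemma \ref{middlek}, and absorb the floor via $\lfloor t\rfloor \geq t/2$ for $t \geq 1$ (equivalently, $\lfloor t\rfloor^2 \geq t^2/4$).
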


\vspace{12pt}
The foregoing propositions can be applied to prove Theorem \ref{genthm}, as follows:
\begin{itemize}
\item by subtracting a suitable integer from all the elements of $\mathcal{A}$, and adding it to all the elements of $\mathcal{B}$, we may assume that we have $\mathcal{S}_{0}=\mathcal{A}+\mathcal{B}$ for some set $\mathcal{A}$ of non-negative integers that contains zero. Note this means, in particular, that $\mathcal{B} = \{0\}+\mathcal{B}$ is a subset of $\mathcal{S}_{0}$, and therefore a subset of $\mathcal{S}$, and in fact
$$ \mathcal{B} \subseteq \{s \in \mathcal{S} : s \not\equiv -a \; \textrm{ mod } p, \quad a \in \mathcal{A}, \; p \in {\cal P}_0\}. $$

\item recall that we write $k = \#\mathcal{A}$.

\item if $2 \leq k \leq 1000 (\sigma_{0} \sigma c^{2})^{-1} \log^{2}x$, and the ``Sieve Controls Size'' condition is satisfied, then the bound in Proposition \ref{smallkscs} is $\leq 4x/(5k \sigma_{0}^{-1} \sigma^{-1})$, which is $< x\sigma_{0}\sigma /k = (\#\mathcal{S}_{0})/k$. This contradicts our earlier observation that $\#\mathcal{B} \geq (\#\mathcal{S}_{0})/k$, so we conclude that $k$ cannot be so small.

\item if $2 \leq k \leq 1000 (\sigma_{0} \sigma c^{2})^{-1} \log^{2}x$, and the ``Bombieri--Vinogradov condition'' is satisfied, then the bound in Proposition \ref{smallkbv} is $\leq (2\#\mathcal{S})/(10\sigma_{0}^{-1}(k-1)) + (\#\mathcal{S})/(2K\sigma_{0}^{-1})$, which is at most $(9\#\mathcal{S})/(10\sigma_{0}^{-1}k) = (9\#\mathcal{S}_{0})/(10k)$. Again, $\mathcal{B}$ cannot be this small and so $k$ cannot be so small. 

\item if $k > 1000 (\sigma_{0} \sigma c^{2})^{-1} \log^{2}x$, we apply Proposition \ref{middlek2} with the choices $y_{1}=x^{1/4}/3$ and $y_{2}=x^{1/4}$, say. We have $\sum_{y_{1}/2 < p \leq y_{1}, p \in {\cal P}_0} \log p \geq c \sum_{y_{1}/2 < p \leq y_{1}} \log p \geq c x^{1/4}/10$, say (since $x$, and therefore $y_{1}$ and $y_{2}$, are large), similarly for the sum on the range $y_{2}/2 < p \leq y_{2}$, so that
\begin{eqnarray}
\#\mathcal{B} & \leq & \#\{s \in \mathcal{S} : s \not\equiv -a \; \textrm{ mod } p, \quad a \in \mathcal{A}, \; p \in {\cal P}_0\} \nonumber \\
& \leq & 128 x \max\{\frac{1}{k^{2}},\frac{25600 \log^{2}x}{c^{2} \sqrt{x}}\} \frac{\log^{2}x}{c^{2}}. \nonumber
\end{eqnarray}
Now we cannot have $\#\mathcal{B} \leq (128 x\log^{2}x)/(k^{2}c^{2})$, since this bound is $< (x \sigma_{0} \sigma)/k$ when $k$ is $> 1000 (\sigma_{0} \sigma c^{2})^{-1} \log^{2}x$, and we must have $\#\mathcal{B} \geq (\#\mathcal{S}_{0})/k = x\sigma_{0}\sigma /k$. Therefore we must have
$$ \#\mathcal{B} \ll x \cdot \frac{\log^{2}x}{c^{2} \sqrt{x}} \cdot \frac{\log^{2}x}{c^{2}} = \frac{\sqrt{x} \log^{4}x}{c^{4}}, $$
as asserted in Theorem \ref{genthm}. Since we have $\#\mathcal{A} \#\mathcal{B} \geq \#\mathcal{S}_{0} = x \sigma_{0}\sigma$, the lower bound claimed for $\#\mathcal{A}$ also immediately follows.
\end{itemize}
\qed

\subsection{Proofs of the propositions}{\label{subsec:proofsofprop}}

\begin{proof}[Proof of Proposition \ref{smallkscs}]
For each $p \in {\cal P}_0^{*}$ let $1 \leq \nu(p) \leq k$ denote the number of
distinct residue classes modulo $p$ occupied by the numbers $a_{1},a_{2},\dotsc
,a_{k}$. Then in view of Montgomery's Large Sieve (that is Lemma \ref{lem:Montgomery}), applied with the choice $Q=\sqrt{x}$, to prove Proposition \ref{smallkscs} it will suffice to show that
$$ \sum_{\substack{q \leq \sqrt{x}, \\ p \mid q \Rightarrow p \in {\cal P}_0^{*}}}
\mu^2(q) \prod_{p \mid q} \dfrac{\nu(p)}{p-\nu(p)} \geq (1/2) \cdot (k/2) \sum_{\substack{1 < q \leq \sqrt{x}, \\ p \mid q \Rightarrow p \in {\cal P}_0^{*}}}
\mu^2(q) \prod_{p \mid q} \dfrac{2}{p}, $$
provided $2 \leq k \leq K$. Recall that if $p \in {\cal P}_0^{*}$ then $p \geq K^{3}$, by definition.

Now note that, for any $q$ counted in the sum on the left hand side,
$$ \prod_{p \mid q} \dfrac{\nu(p)}{p-\nu(p)} = \prod_{p \mid q} \dfrac{(\nu(p)/p)}{1-(\nu(p)/p)} = \left( \prod_{p \mid q} \frac{\nu(p)}{p} \right) \left( \prod_{p \mid q} \sum_{i=0}^{\infty} \left(\frac{\nu(p)}{p} \right)^{i} \right), $$
so if we define a completely multiplicative function $f(n)$ by setting $f(p) =
\nu(p) \textbf{1}_{p \in {\cal P}_0^{*}}$, then
$$ \sum_{\substack{q \leq \sqrt{x}, \\ p \mid q \Rightarrow p \in {\cal P}_0^{*}}}
\mu^2(q) \prod_{p \mid q} \dfrac{\nu(p)}{p-\nu(p)} = \sum_{q \leq \sqrt{x}} \mu^2(q) \frac{f(q)}{q} \sum_{\substack{r=1, \\ p \mid r \Rightarrow p \mid q}}^{\infty} \frac{f(r)}{r} \geq \sum_{n \leq \sqrt{x}} \frac{f(n)}{n} . $$
Let us also define a completely multiplicative function $g(n)$, by setting $g(p) = k\textbf{1}_{p \in {\cal P}_0^{*}}$. Thus
$0 \leq f(p) \leq g(p) < p$ for each prime $p$. It is a general fact, whose proof we shall reproduce in just a moment, that for any non-negative completely multiplicative functions related in this way, 
$$ \sum_{n \leq \sqrt{x}} \frac{f(n)}{n} \geq \prod_{p \leq \sqrt{x}} \left(1-\frac{g(p)}{p}\right) \prod_{p \leq \sqrt{x}} \left(1-\frac{f(p)}{p}\right)^{-1} \sum_{n \leq \sqrt{x}} \frac{g(n)}{n} . \qquad (**) $$
Thus we have
$$ \sum_{\substack{q \leq \sqrt{x}, \\ p \mid q \Rightarrow p \in {\cal P}_0^{*}}}
\mu^2(q) \prod_{p \mid q} \dfrac{\nu(p)}{p-\nu(p)} \geq \sum_{n \leq \sqrt{x}}
\frac{f(n)}{n} \geq \prod_{\substack{p \leq \sqrt{x}, \\ 
p \in {\cal P}_0^{*}}} \left(1-\frac{(k-\nu(p))}{p-\nu(p)}\right)  \sum_{\substack{q \leq \sqrt{x}, \\ p \mid q \Rightarrow p \in {\cal P}_0^{*}}}
\mu^2(q) \prod_{p \mid q} \dfrac{k}{p} , $$
noting that the final sum over $q$ just consists of the squarefree terms from the corresponding sum $\sum_{n \leq \sqrt{x}} \frac{g(n)}{n}$.

Now if $p \in {\cal P}_0^{*}$ we certainly have $p \geq K^{3} \geq 2k \geq 2\nu(p)$, and therefore
$$ 0 \leq \frac{k-\nu(p)}{p-\nu(p)} \leq \frac{2(k-\nu(p))}{p} . $$
Then if we use the fact that $1-t \geq e^{-2t}$ when $0 \leq t \leq 1/4$, and
apply Lemma \ref{InverseSieve} on dyadic intervals (starting at $K^{3}$), we
see the product over primes in the previous display is
\begin{eqnarray}
 \prod_{\substack{p \leq \sqrt{x}, \\ 
p \in {\cal P}_0^{*}}} \left(1-\frac{(k-\nu(p))}{p-\nu(p)}\right)
&\geq &e^{-4\sum_{p \leq \sqrt{x}, p \in {\cal P}_0^{*}} \frac{k-\nu(p)}{p} } \nonumber\\
&\geq &e^{-(8(k^{2}-k)\log x)/(K^{3} \log(K^{3}))} \geq e^{-(8\log x)/K} \geq 1/2, \nonumber
\end{eqnarray}
say. Finally we note that
$$ \sum_{\substack{q \leq \sqrt{x}, \\ p \mid q \Rightarrow p \in {\cal P}_0^{*}}}
\mu^2(q) \prod_{p \mid q} \dfrac{k}{p} > \sum_{\substack{1 < q \leq \sqrt{x}, \\ p \mid q \Rightarrow p \in {\cal P}_0^{*}}}
\mu^2(q) \prod_{p \mid q} \dfrac{k}{p} \geq (k/2) \sum_{\substack{1 < q \leq \sqrt{x}, \\ p \mid q \Rightarrow p \in {\cal P}_0^{*}}}
\mu^2(q) \prod_{p \mid q} \dfrac{2}{p}, $$
since each term in the second sum involves at least one factor $k=(k/2) \cdot 2$. This all suffices to prove Proposition \ref{smallkscs}.

It remains to prove the general inequality $(**)$, where $f$ and $g$ are completely multiplicative functions satisfying $0 \leq f(p) \leq g(p) < p$. The reader may find this result in the forthcoming book~\cite{GranvilleandSound} of Granville and Soundararajan, but we will reproduce the short proof here. Let $h$ be the multiplicative function defined by
$$ h(p^{b}) = (g(p)-f(p))g(p^{b-1}), \;\;\; p \leq \sqrt{x}, \;\;\; b \geq 1, $$
and by $h(p^{b}) = 0$ when $p > \sqrt{x}$. Thus $h \geq 0$, and the reader may check that $g=f*h$, where $*$ here denotes Dirichlet convolution. Then
\begin{eqnarray}
\sum_{N \leq \sqrt{x}} \frac{g(N)}{N} = \sum_{mn \leq \sqrt{x}} \frac{h(m) f(n)}{mn} & \leq & (\sum_{m=1}^{\infty} \frac{h(m)}{m}) \sum_{n \leq \sqrt{x}} \frac{f(n)}{n} \nonumber \\
& = & \prod_{p \leq \sqrt{x}} \left(1-\frac{g(p)}{p}\right)^{-1} \prod_{p \leq \sqrt{x}} \left(1-\frac{f(p)}{p}\right) \sum_{n \leq \sqrt{x}} \frac{f(n)}{n}, \nonumber
\end{eqnarray}
as claimed in $(**)$.
\end{proof}

\begin{proof}[Proof of Proposition \ref{smallkbv}]
The proof is like that of Proposition \ref{smallkscs}, but crucially using Selberg's
sieve instead of the large sieve. Let $0 \leq \nu(p) \leq k$ denote the number
of {\em non-zero} residue classes modulo $p$ occupied by the numbers
$a_{1},a_{2},\dotsc ,a_{k}$, noting this is a slightly different definition than previously. Applying Lemma \ref{lem:Selberg} with the choice $\omega_{\mathcal{B}}(p)=\frac{p\nu(p)}{p-1}$, (and ${\cal P}_0$ replaced by ${\cal P}_0^{*}$, and $\mathcal{C}$ replaced by $\mathcal{S}$, and recalling that $p \geq K^{3}$ if $p \in {\cal P}_0^{*}$), we find $\#\{s \in \mathcal{S}: s \not\equiv a_{i} \; \textrm{ mod } p, \quad 1 \leq i \leq k, \; p \in {\cal P}_0^{*}\}$ is at most
$$ \frac{\#\mathcal{S}}{\sum_{\substack{q \leq Q, \\ p \mid q \Rightarrow p \in {\cal P}_0^{*}}}
\mu^2(q) \prod_{p \mid q} \dfrac{\nu(p)}{p-\nu(p)}} + \sum_{\substack{d \leq Q^{2}, \\ p \mid d \Rightarrow p \in {\cal P}_0^{*}}} \mu^{2}(d) \tau_{3}(d) \left|\#\{s \in \mathcal{S}: d \mid \prod_{i=1}^{k}(s-a_{i}) \} - \#\mathcal{S} \prod_{p \mid d} \frac{\nu(p)}{(p-1)} \right|. $$
Arguing exactly as in the proof of Proposition \ref{smallkscs}, except choosing $g(p)=k-1$ (instead of $k$) for those primes $p \in {\cal P}_0^{*}$ such that one of $a_{1},\dotsc ,a_{k}$ is congruent to zero modulo $p$, one finds that
$$ \sum_{\substack{q \leq Q, \\ p \mid q \Rightarrow p \in {\cal P}_0^{*}}}
\mu^2(q) \prod_{p \mid q} \dfrac{\nu(p)}{p-\nu(p)} \geq \frac{(k-1)}{2} \sum_{\substack{1 < q \leq Q, \\ p \mid q \Rightarrow p \in {\cal P}_0^{*}}}
\frac{\mu^2(q)}{q} . $$
Thus the first term in the previous display is at most as large as the first term in the statement of Proposition \ref{smallkbv}. It remains to treat the second term.

But a squarefree integer $d$ divides
$\prod_{i=1}^{k}(s-a_{i})$ if and only if, for each prime divisor $p$ of $d$,
$s$ lies modulo $p$ in one of the residue classes occupied by the numbers 
$a_{1},\dotsc ,a_{k}$. By assumption, $s \not\equiv 0 \; \textrm{ mod } p$ for
any $s \in \mathcal{S}$ and $p \in {\cal P}_0^{*}$, so actually $s$ must lie in
one of the $\nu(p)$ non-zero residue classes occupied by $a_{1},\dotsc ,a_{k}$. By the Chinese Remainder Theorem, this is the same as saying that $s$ lies modulo $d$ in one of $\prod_{p|d} \nu(p)$ residue classes, so the triangle inequality implies the second term is
$$ \leq \sum_{\substack{d \leq Q^{2}, \\ p \mid d \Rightarrow p \in {\cal P}_0^{*}}} \mu^{2}(d) \tau_{3}(d) \left(\prod_{p|d} \nu(p) \right) \max_{(a,d)=1} \left|\#\{s \in \mathcal{S}: s \equiv a \; \textrm{mod } d \} - \frac{\#\mathcal{S}}{\phi(d)} \right|. $$
Since $\prod_{p|d} \nu(p) \leq \prod_{p|d} k = (\prod_{p|d} 3)^{(\log k)/(\log 3)}$, and $\prod_{p|d} 3$ is at most $\tau_{3}(d)$, the bound asserted in Proposition \ref{smallkbv} follows.
\end{proof}

\begin{proof}[Proof of Lemma \ref{middlek}]
This is a consequence of Montgomery's Large Sieve and an ``inverse sieve''
argument, similar to (but easier than) Proposition \ref{smallkscs}. For each $p
\in {\cal P}_0$, let $1 \leq \nu(p) \leq k$ denote the number of distinct
residue classes modulo $p$ occupied by the numbers $a_{1},a_{2},\dotsc ,a_{k}$. Using Lemma \ref{lem:Montgomery} with the choice $Q=\sqrt{x}$, we see $\#\{s \in \mathcal{S}: s \not\equiv a_{i} \; \textrm{ mod } p, \quad 1 \leq i \leq k, \; p \in {\cal P}_0\}$ is
$$ \leq \frac{2x}{\sum_{q \leq \sqrt{x}, \atop p \mid q \Rightarrow p \in {\cal P}_0}
\mu^2(q) \prod_{p \mid q} \dfrac{\nu(p)}{p-\nu(p)}} \leq \frac{2x}{(\sum_{y_{1}/2 < p \leq y_{1}, p \in {\cal P}_0} \dfrac{\nu(p)}{p-\nu(p)}) (\sum_{y_{2}/2 < p \leq y_{2}, p \in {\cal P}_0} \dfrac{\nu(p)}{p-\nu(p)})}, $$
since each product $p_{1}p_{2}$ with $y_{1}/2 < p_{1} \leq y_{1}$ and $y_{2}/2 < p_{2} \leq y_{2}$ is a squarefree integer that is $\leq \sqrt{x}$.

The above is at most $2x/((\sum_{y_{1}/2 < p \leq y_{1}, p \in {\cal P}_0} \nu(p)/p) (\sum_{y_{2}/2 < p \leq y_{2}, p \in {\cal P}_0} \nu(p)/p))$, and by Lemma \ref{InverseSieve} each of the terms in the denominator is at least
$$ \frac{k}{2} \sum_{y_{i}/2 < p \leq y_{i}, p \in {\cal P}_0} \frac{1}{p} \geq \frac{k}{2y_{i} \log y_{i}} \sum_{y_{i}/2 < p \leq y_{i}, p \in {\cal P}_0} \log p \geq \frac{k}{4\log y_{i}} \frac{\sum_{y_{i}/2 < p \leq y_{i}, p \in {\cal P}_0} \log p}{\sum_{y_{i}/2 < p \leq y_{i}} \log p} \sum_{y_{i}/2 < p \leq y_{i}} \frac{\log p}{p}. $$
Lemma \ref{middlek} follows on noting that, by standard results on the distribution of prime numbers, $\sum_{y_{i}/2 < p \leq y_{i}} (\log p)/p \geq 1/2$ when $y_{i}$ is large.
\end{proof}

\begin{proof}[Proof of Proposition \ref{middlek2}]
Define
$$ k_{0} := \min\{k, \lfloor (\sum_{y_{1}/2 < p \leq y_{1}, p \in {\cal P}_0} \log p)/ 8\log x \rfloor, \lfloor (\sum_{y_{2}/2 < p \leq y_{2}, p \in {\cal P}_0} \log p)/ 8\log x \rfloor \}, $$
where $\lfloor \cdot \rfloor$ denotes the
integer part, noting that $k_{0} \geq 1$ by the hypotheses of Proposition \ref{middlek2}. Then we trivially have that
$$ \#\{s \in \mathcal{S}: s \not\equiv a_{i} \; \textrm{ mod } p, \quad 1 \leq i \leq k, \; p \in {\cal P}_0\} \leq \#\{s \in \mathcal{S}: s \not\equiv a_{i} \; \textrm{ mod } p, \quad 1 \leq i \leq k_{0}, \; p \in {\cal P}_0\}, $$
and Proposition \ref{middlek2} follows if we use Lemma \ref{middlek} to bound the right hand side (and observe that $\lfloor t \rfloor^{2} \geq t^{2}/4$ for all $t \geq 1$).
\end{proof}

\section{Proofs of Theorems \ref{smooth1} and \ref{smooth2}, application to smooth numbers}{\label{sec:proofsofthms}}
We shall prove Theorem \ref{smooth1} first, by feeding suitable information about the set of $y$-smooth numbers into our general additive irreducibility result, Theorem \ref{genthm}. Corollary \ref{smooth2} will follow from Theorem \ref{smooth1} by a general type of argument using Ruzsa's inequality.

More specifically, for any $x,y \geq 2$ and any positive integers $a,d$ let us
recall the following standard notation for smooth numbers:
$$ \Psi(x,y) := \mathcal{S}_{y}(x) = \#\{n \leq x : p \nmid n, \; p > y\}, $$
$$ \Psi_{d}(x,y) := \#\{n \leq x : (n,d)=1, \; p \nmid n, \; p > y\}, $$
$$ \Psi(x,y;a,d) := \#\{n \leq x : n \equiv a \; \textrm{mod } d, \; p \nmid n, \; p > y\}. $$
 We shall require the
following two results: a lower bound, and a Bombieri--Vinogradov type result.

\begin{theorem}[See e.g. Corollary 7.9 of Montgomery and Vaughan~\cite{MontgomeryandVaughan:2007}]\label{smoothsize}
For any fixed $a \geq 1$ and any $\log^{a}x \leq y \leq x$ we have
$$ \Psi(x,y) \geq \Psi(x,\log^{a}x) = x^{1-1/a + o_{a}(1)}, $$
where the term $o_{a}(1)$ tends to $0$ as $x \rightarrow \infty$.
\end{theorem}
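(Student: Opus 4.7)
The inequality $\Psi(x, y) \geq \Psi(x, \log^a x)$ is immediate: if $y \geq \log^a x$, every $\log^a x$-smooth integer is automatically $y$-smooth, so one set of integers is contained in the other. The substance of the theorem is thus the asymptotic $\Psi(x, \log^a x) = x^{1-1/a + o_a(1)}$, which I would establish via matching upper and lower bounds, both of them quite elementary.

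For the upper bound I would apply Rankin's trick: for any $\sigma \in (0,1)$,
\[ \Psi(x,y) \leq x^\sigma \sum_{n \in \mathcal{S}_y} n^{-\sigma} = x^\sigma \prod_{p \leq y}(1-p^{-\sigma})^{-1}. \]
Expanding $-\log(1-p^{-\sigma}) = p^{-\sigma} + O(p^{-2\sigma})$ and applying partial summation against the prime number theorem give $\sum_{p \leq y} p^{-\sigma} \sim y^{1-\sigma}/((1-\sigma)\log y)$ whenever $(1-\sigma)\log y \to \infty$. For $a > 1$, taking $y = \log^a x$ and $\sigma = 1 - 1/a$ yields $y^{1-\sigma} = \log x$ and $(1-\sigma)\log y = \log\log x$, so the logarithm of the upper bound is $(1-1/a)\log x + O((\log x)/\log\log x)$, which is $(1-1/a + o(1))\log x$, as required. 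The boundary case $a=1$, where $\sigma = 0$ is degenerate, is handled by choosing $\sigma \asymp 1/\log\log x$ instead; the same computation then gives $\Psi(x,\log x) = x^{o(1)} = x^{1-1/a + o_a(1)}$.

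For the lower bound I would count smooth numbers explicitly. Set $k := \pi(\log^a x)$ and $N := \lfloor (\log x)/\log(\log^a x) \rfloor$. Every non-negative integer tuple $(e_p)_{p \leq \log^a x}$ with $\sum_p e_p \leq N$ produces a distinct $\log^a x$-smooth integer $\prod_{p \leq \log^a x} p^{e_p} \leq (\log^a x)^N \leq x$, so $\Psi(x, \log^a x) \geq \binom{k+N}{N}$. Using $k \sim (\log x)^a/(a \log\log x)$, $N \sim (\log x)/(a \log\log x)$, and hence $\log(k/N) = (a-1)\log\log x + O(1)$, Stirling's formula gives
\[ \log\binom{k+N}{N} \geq N \log(k/N) + O(N) = (1 - 1/a)\log x + o_a(\log x), \]
which yields the desired matching lower bound. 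The main technical point on both sides is ensuring the $o_a(1)$ error is uniform in $x$ for each fixed $a$: in the upper bound this amounts to tracking errors in the prime number theorem, while in the lower bound it is a routine Stirling estimate. I expect no serious obstacle, reflecting that this is a classical result in smooth-number theory.
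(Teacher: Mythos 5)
The paper does not prove this result at all; it is stated purely as a citation to Corollary 7.9 of Montgomery and Vaughan, so there is no in-paper argument to compare against. Your proposal supplies a self-contained proof along the standard textbook lines, and it is essentially correct: Rankin's trick with $\sigma = 1 - 1/a$ gives the upper bound, and the combinatorial count of exponent tuples $(e_p)$ with $\sum_p e_p \leq N$ gives a matching lower bound, with the key calculations $y^{1-\sigma} = \log x$, $(1-\sigma)\log y = \log\log x$, $\log(k/N) = (a-1)\log\log x + O(1)$ all checking out. (In fact for the lower bound you do not even need Stirling: $\binom{k+N}{N} \geq (k/N)^{N}$ directly.) The approach mirrors what Montgomery and Vaughan themselves do, so there is no meaningful divergence in method.

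The one spot that deserves more care is the boundary case $a = 1$. You correctly flag that $\sigma$ must be taken of size $\asymp 1/\log\log x$ rather than $0$, but the phrase ``the same computation then gives'' glosses over the fact that the expansion $-\log(1-p^{-\sigma}) = p^{-\sigma} + O(p^{-2\sigma})$ is not useful uniformly as $\sigma \to 0$: when $\sigma\log p$ is bounded one instead has $1-p^{-\sigma} \asymp \sigma\log p$, so $-\log(1-p^{-\sigma}) = \log(1/(\sigma\log p)) + O(1)$, and summing this over $p \leq \log x$ with $\sigma \asymp 1/\log\log x$ gives $O(\log x/\log\log x)$, matching $\sigma\log x \asymp \log x/\log\log x$ and yielding $\Psi(x,\log x) = x^{o(1)}$. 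Also, for $1 < a < 2$ the error term $\sum_{p \leq y} p^{-2\sigma}$ does not converge, but it is $\asymp (\log x)^{2-a}/\log\log x = o(\log x)$, so it is harmless --- worth a sentence if you are being careful about uniformity. With these two clarifications the argument is complete.
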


\begin{theorem}[See Proposition 2 of Harper~\cite{Harper:2012}]\label{smoothprop2}
Let $0 < \eta \leq 1/80$ be any fixed constant, and let ${\cal P}_0 \subseteq [x^{\eta},\infty)$ be a set of primes. Then for any large $y \leq x$ and any $x^{\eta} \leq Q \leq \sqrt{x}$ we have
$$ \sum_{\substack{d \leq Q, \\ p \mid d \Rightarrow p \in {\cal P}_0}} \max_{(a,d)=1} \left|\Psi(x,y;a,d) - \frac{\Psi_{d}(x,y)}{\phi(d)} \right| \ll \log^{7/2}x \sqrt{\Psi(x,y)} (Q + x^{1/2-\eta}\log^{2}x) , $$
where $\phi(d)$ is Euler's totient function.
\end{theorem}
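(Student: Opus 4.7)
The plan is to prove this as a Bombieri--Vinogradov-type level-of-distribution theorem for $y$-smooth numbers, with the rough-modulus hypothesis providing the saving beyond the ``square-root barrier''.

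First I would apply the standard orthogonality identity: for $(a,d)=1$,
\[
\Psi(x,y;a,d)-\frac{\Psi_{d}(x,y)}{\phi(d)}=\frac{1}{\phi(d)}\sum_{\chi\ne\chi_{0}\bmod d}\bar\chi(a)S(\chi),\qquad S(\chi):=\sum_{\substack{n\leq x\\ n\in\mathcal{S}_{y}}}\chi(n).
\]
The triangle inequality in $a$, followed by inducing each non-principal $\chi\bmod d$ from its primitive core $\chi^{*}\bmod q^{*}$ (with $q^{*}\mid d$, $q^{*}>1$), together with the estimate $\sum_{d\leq Q,\,q^{*}\mid d}1/\phi(d)\ll(\log Q)/\phi(q^{*})$, reduces the task to showing
\[
\sum_{\substack{x^{\eta}\leq q\leq Q\\ p\mid q\Rightarrow p\in\mathcal{P}_{0}}}\frac{1}{\phi(q)}\sideset{}{^{*}}\sum_{\chi\bmod q}|S(\chi)|\ll \log^{5/2}x\,\sqrt{\Psi(x,y)}\bigl(Q+x^{1/2-\eta}\log^{2}x\bigr),
\]
up to a negligible ``$\gcd(n,d)>1$'' error. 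The crucial role of the rough-modulus hypothesis is to force every primitive conductor $q^{*}>1$ to satisfy $q^{*}\geq x^{\eta}$.

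The vanilla multiplicative large sieve, applied to the inner character sum, only yields a bound of size $\sqrt{x\,\Psi(x,y)}$, missing the saving factor $x^{\eta}$. To improve this, I would exploit the bilinear structure of the smooth numbers: every $y$-smooth $n\leq x$ admits a factorisation $n=ab$ with $a\in(A,2A]$, $b\in(B,2B]$, $AB\asymp x$, and both factors smooth, obtained by grouping prime factors in a controlled way (the bounded overcount being absorbed into coefficients $\alpha_{a},\beta_{b}$ whose $L^{2}$-norms are $\ll\Psi(A,y)^{1/2}$ and $\Psi(B,y)^{1/2}$). Summing dyadically over the $O(\log^{2}x)$ relevant pairs $(A,B)$ expresses $S(\chi)$ as a bounded sum of bilinear forms, to each of which the bilinear multiplicative large sieve inequality
\[
\sum_{q\leq Q}\sideset{}{^{*}}\sum_{\chi\bmod q}\Bigl|\sum_{a,b}\alpha_{a}\beta_{b}\chi(ab)\Bigr|^{2}\ll(A+Q^{2})(B+Q^{2})\,\|\alpha\|_{2}^{2}\|\beta\|_{2}^{2}
\]
applies. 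Cauchy--Schwarz over the primitive-character variable then converts this $L^{2}$ estimate into the desired $L^{1}$ estimate of shape $\sqrt{\Psi(x,y)}(Q+x^{1/2-\eta})$, after optimising $(A,B)$ against the conductor lower bound $q\geq x^{\eta}$.

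Finally, assembling the pieces and carefully tracking every logarithmic loss (one $\log$ from the primitive-character reduction, $\log^{1/2}$ from Cauchy--Schwarz, $\log^{2}$ from the dyadic sums, and a few more when comparing $\Psi(A,y)\Psi(B,y)$ to $\Psi(x,y)$) produces the claimed exponent $7/2$ of $\log x$ in the main factor. The principal obstacle is the bilinear step: realising the $x^{\eta}$ saving requires a delicate balancing of $A,B$ against the conductor restriction $q\geq x^{\eta}$ so that neither the diagonal contribution $AB=x$ nor the off-diagonal terms $Q^{2}(A+B)$ dominate, and it is exactly this balance that forces the technical constraint $\eta\leq 1/80$.
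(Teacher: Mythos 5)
Your opening two steps (orthogonality and the reduction to primitive characters of conductor $\geq x^{\eta}$, exploiting the rough-modulus hypothesis to forbid small conductors) coincide exactly with what the paper does. At that point, however, the paper's ``proof'' simply ends by citing Proposition~2 of Harper~\cite{Harper:2012} and the two paragraphs following it; the substance of the character-sum estimate is treated as a black box. You instead attempt to reconstruct that estimate via a bilinear factorisation of smooth numbers plus a large-sieve argument, which is a genuinely more ambitious route than the paper takes and is broadly the right circle of ideas for a Bombieri--Vinogradov theorem with a factorable sequence.

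There is, though, a real gap in the large-sieve step as you have written it. You claim an $L^{2}$ inequality of the shape
\[
\sum_{q\leq Q}\,\sideset{}{^{*}}\sum_{\chi\bmod q}\Bigl|\sum_{a,b}\alpha_{a}\beta_{b}\chi(ab)\Bigr|^{2}\ll(A+Q^{2})(B+Q^{2})\,\|\alpha\|_{2}^{2}\|\beta\|_{2}^{2},
\]
and then propose to pass to the desired $L^{1}$ bound by a Cauchy--Schwarz over the roughly $Q^{2}$ primitive characters. But Cauchy--Schwarz in that direction costs a factor of order $Q$, i.e. it gives $\sum_{\chi}|T(\chi)|\ll Q\sqrt{(A+Q^{2})(B+Q^{2})}\,\|\alpha\|_{2}\|\beta\|_{2}$, which is far weaker than what you need and would not produce the stated $Q+x^{1/2-\eta}$ term. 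The correct order of operations is the reverse: since $\sum_{a,b}\alpha_{a}\beta_{b}\chi(ab)=(\sum_{a}\alpha_{a}\chi(a))(\sum_{b}\beta_{b}\chi(b))$, one applies Cauchy--Schwarz \emph{first} over $(q,\chi)$ to separate the two factors, and \emph{then} the ordinary (linear) multiplicative large sieve to each, which yields the $L^{1}$ estimate $\sum_{q,\chi^{*}}|T(\chi)|\ll\sqrt{(A+Q^{2})(B+Q^{2})}\,\|\alpha\|_{2}\|\beta\|_{2}$ directly, with no $Q$ loss. A secondary imprecision: the ``bounded overcount'' in your factorisation $n=ab$ is not automatic --- the number of divisors of $n$ in a dyadic window can be large --- and one must fix a canonical factorisation (for instance by building $a$ greedily from the smallest prime factors of $n$ up to the threshold $A$) rather than sum over all such factorisations. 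With these two corrections your outline becomes a reasonable sketch of the ingredient the paper imports from~\cite{Harper:2012}.
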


Theorem \ref{smoothprop2} follows on noting that the left hand side is at most
$$ \sum_{\substack{d \leq Q, \\ p \mid d \Rightarrow p \in {\cal P}_0}} \frac{1}{\phi(d)} \sum_{\substack{\chi \; (\textrm{mod } d), \\ \chi \neq \chi_{0}}} \left|\sum_{\substack{n \leq x, \\ n \in \mathcal{S}_{y}}} \chi(n) \right| \leq \sum_{x^{\eta} \leq r \leq Q} \sum_{\substack{\chi^{*} \; (\textrm{mod } r), \\ \chi^{*} \; \textrm{primitive}}} \sum_{x^{\eta} \leq d \leq Q} \frac{1}{\phi(d)} \sum_{\substack{\chi \; (\textrm{mod } d), \\ \chi^{*} \; \textrm{induces } \chi}} \left|\sum_{\substack{n \leq x, \\ n \in \mathcal{S}_{y}}} \chi(n) \right|, $$
and then using Proposition 2 of Harper~\cite{Harper:2012} (and the discussion in the two paragraphs following Proposition 2) to bound this. Note
that because we are only interested in moduli $d$ with all their prime factors
larger than $x^{\eta}$, we do not need to include values $r$ of the conductor that are smaller than $x^{\eta}$, which allows us to give quite a strong bound. See also Fouvry and Tenenbaum's paper~\cite{FouvryandTenenbaum:1996} for an earlier Bombieri--Vinogradov type result for smooth numbers, which would have allowed us to prove an additive irreducibility result on the smaller range $x^{(\log\log\log x)/\log\log x} \leq y \leq x^{\kappa}$.

Note that the counting function $\Psi(x,y)$ is very small when $y$ is small (and in fact even when $y$ is quite large), but that the set of prime factors forbidden to occur in the $y$-smooth numbers, namely the primes between $y$ and $x$, is not a very large set as measured by the kinds of denominators occurring in sieve bounds. In particular, we cannot satisfy the ``Sieve Controls Size'' condition, or anything similar, in this case, but instead we can use Theorem \ref{smoothprop2} to satisfy the ``Bombieri--Vinogradov condition'' in Theorem \ref{genthm}.

\begin{proof}[Proof of Theorem \ref{smooth1}]
Recall from the statement that $\kappa$ is a sufficiently small absolute constant,
and $D$ is a sufficiently large absolute constant.

We will apply Theorem \ref{genthm} to the set $\mathcal{S} = \mathcal{S}_{y} \cap [0,x]$, where we put $y = f(x)$, aiming to deduce\footnote{Recall the discussion of finite and infinite decomposability statements in section \ref{sec:finiteinfinite}. Since we assume in Theorem \ref{smooth1} that $f(n)$ is increasing, $f(2n) \leq f(n)(1+(100\log f(n))/\log n)$, and that $f(n) \geq \log^{2}n$, it is a fact that
$$ \# \mathcal{S}_{f(n)} \cap [0,x] \geq \# \mathcal{S}_{f(x/2)} \cap [x/2,x] \gg \# \mathcal{S}_{f(x/2)} \cap [0,x] \gg \#\mathcal{S}_{y} \cap [0,x] , $$
with an absolute implied constant. Thus we can deduce Theorem \ref{smooth1} from results about the finitary case $\mathcal{S}=\mathcal{A}+\mathcal{B}$. (For the final inequality see the proof of Smooth Numbers Result 2 in \cite{Harper:2012}, which can be adapted to show that if $y \geq \log^{2}x$, say, then $\Psi(x,y(1+(100\log y)/\log x)) \ll \Psi(x,y)$ (since if $y \geq \log^{2}x$ then $y/\log x \geq \sqrt{y}$, so one can save a factor of $\log y$ in the estimation of $\sum_{y < p \leq y(1+(100\log y)/\log x)}$ there).)} that if $\mathcal{S}=\mathcal{A}+\mathcal{B}$ then we must have $\#\mathcal{A}, \#\mathcal{B} \ll \sqrt{x} \log^{4}x$.

Since, by assumption, we have $\log^{D}x \leq y \leq x^{\kappa} \leq x^{1/10}/2$ (say), we can take ${\cal P}_0 = \{x^{\kappa} < p \leq x : p \textrm{ prime}\}$ and $c=1$ in Theorem \ref{genthm}. Moreover, using Theorem \ref{smoothsize} we see
$$ \sigma = \frac{\Psi(x,y)}{x} \geq \frac{\Psi(x,\log^{D}x)}{x} = x^{-1/D + o(1)} , $$
which is at least $1000 x^{-2/D} \log^{2}x$ (say) provided $x$ is large enough. Thus we have $K = 1000\sigma^{-1}\log^{2}x \leq x^{2/D}$ in Theorem \ref{genthm}, so that ${\cal P}_0^{*} = {\cal P}_0$ in this case (provided $x$ is large enough and $6/D \leq \kappa$, which we may certainly assume).

Now it suffices to check that the two parts of the ``Bombieri--Vinogradov condition'' in Theorem \ref{genthm} are satisfied. We choose $Q=x^{1/5}$ and note that, in view of a standard result on the distribution of prime numbers (and since $\kappa$ is small),
$$ \sum_{\substack{1 < q \leq Q, \\ p \mid q \Rightarrow p \in {\cal P}_0^{*}}}
\frac{\mu^2(q)}{q} \geq \sum_{x^{\kappa} < p \leq x^{1/5}} \frac{1}{p} = \log\log(x^{1/5}) - \log\log(x^{\kappa}) + O(1) \geq 10, $$
as required.

Next, if $d \leq Q^{2} = x^{2/5}$ has all of its prime factors from ${\cal P}_0^{*} = {\cal P}_0$ then it has at most $\log(x^{2/5})/\log(x^{\kappa}) < \kappa^{-1}$ prime factors, and therefore
$$ \tau_{3}(d) \leq 3^{1/\kappa},$$
since $\tau_{3}(d)$ is at most the number of ways of partitioning the prime
factors of $d$ into three sets. Thus
\begin{eqnarray}
&& \sum_{\substack{d \leq Q^{2}, \\ p \mid d \Rightarrow p \in {\cal P}_0^{*}}} \mu^{2}(d) \tau_{3}(d)^{1+(\log K)/(\log 3)} \max_{(a,d)=1} \left|\Psi(x,y;a,d) - \frac{\Psi(x,y)}{d} \right| \nonumber \\
& \leq & K^{C} \sum_{\substack{d \leq x^{2/5}, \\ p \mid d \Rightarrow p \in {\cal P}_0^{*}}} \mu^{2}(d) \max_{(a,d)=1} \left|\Psi(x,y;a,d) - \frac{\Psi_{d}(x,y)}{\phi(d)} \right| \nonumber
\end{eqnarray}
for a certain constant $C=C(\kappa)$, using the fact that $\Psi(x,y)=\Psi_{d}(x,y)$ if all the prime factors of $d$ are larger than $y$. Recalling that we have $K \leq x^{2/D} $, where $D$ is suitably large in terms of $\kappa$, then Theorem \ref{smoothprop2} shows the above is $\leq \Psi(x,y)/2K$, and so the Bombieri--Vinogradov condition is satisfied.
\end{proof}

\begin{proof}[Proof of Corollary \ref{smooth2}]
Suppose, for a contradiction, that we had $\mathcal{S}_{y} \cap [0,x] = \mathcal{A} + \mathcal{B} + \mathcal{C}$ for sets $\mathcal{A}, \mathcal{B}, \mathcal{C}$ of positive integers each containing at least two elements. By Ruzsa's inequality, (that is Lemma \ref{lem:Ruzsa}),
$$ \Psi(x,y)^{2} = \mathcal{S}_{y}(x)^{2} \leq (\#(\mathcal{A}+\mathcal{B}))(\#(\mathcal{A}+\mathcal{C})) (\#(\mathcal{B}+\mathcal{C})). $$
But if $\mathcal{S}_{y} \cap [0,x] = \mathcal{A} + \mathcal{B} + \mathcal{C} = (\mathcal{A} + \mathcal{B}) + \mathcal{C}$ then, by Theorem \ref{smooth1}, $\#(\mathcal{A}+\mathcal{B}) \ll \sqrt{x} \log^{4}x$, and similarly we would have $\#(\mathcal{A}+\mathcal{C}), \#(\mathcal{B}+\mathcal{C}) \ll \sqrt{x} \log^{4}x$. Thus
$$ \Psi(x,y)^{2} \ll x^{3/2} \log^{12}x , $$
which massively contradicts our lower bound $\Psi(x,y) \geq \Psi(x,\log^{D}x) = x^{1-1/D + o(1)}$ provided $D$ and $x$ are large enough.  
\end{proof}

\section{Proofs of Theorems \ref{thm:tauupperbound} and \ref{cor:semigroup}, sets generated by a dense subsequence of the primes}
{\label{sec:proofsprimefactors}}
In this section we shall prove Theorem \ref{thm:tauupperbound} and Corollary \ref{cor:semigroup}.

We first state Wirsing's mean value theorem, as this is used
in counting the elements in ${\cal Q}({\cal T})$.
\begin{lemma}[Wirsing \cite{Wirsing:1967}]
Let $f$ be a non-negative multiplicative arithmetic function that satisfies the
following hypotheses:
\begin{enumerate}
\item
\[\sum_{p \leq x} f(p) \frac{\log p}{p} \sim \tau \log x,\]
where $\tau$ is a positive constant.
\item
$f(p)=O(1)$.
\item
\[ \sum_{p \text{ prime, } n \geq 2}
\frac{f(p^n)}{p^n}< \infty.\]
\item
  if $\tau \leq 1$,
\[ \sum_{\substack{p \text{ prime, } n \geq 2, \\ p^{n} \leq x}} f(p^n)\ll \frac{x}{\log x}.\]
\end{enumerate}
 Then 
\[ \sum_{n \leq x} f(n) = (1+o(1)) \frac{x}{\log x} 
\frac{e^{-\gamma \tau}}{\Gamma(\tau)} 
\prod_{p \leq x} 
\left( 1+\frac{f(p)}{p}+ \frac{f(p^2)}{p^2}+ \dotsc \right).\\
\]
Here $\gamma$ is Euler's constant.

\end{lemma}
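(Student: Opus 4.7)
The plan is to derive an asymptotic integral equation for $S(x) := \sum_{n \leq x} f(n)$ from the hypotheses and then solve it, following the essence of Wirsing's original argument. The starting point is the multiplicative identity
\[
\sum_{n \leq x} f(n) \log n \;=\; \sum_{p^{a} \leq x} f(p^{a}) \log p^{a} \sum_{\substack{m \leq x/p^{a} \\ (m,p) = 1}} f(m),
\]
obtained by writing $\log n = \sum_{p^{a} \| n} \log p^{a}$ and using multiplicativity. Hypotheses (2)--(4) are tailored precisely to allow truncation of the outer sum to $a = 1$ with negligible error: condition (3) handles the sparse tail of prime powers, condition (4) handles their aggregate density when $\tau \leq 1$, and condition (2) prevents $f(p)$ from growing too fast. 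An inclusion--exclusion on the coprimality condition, combined with (2)--(3), then replaces the inner sum by $S(x/p)$ with acceptable error.

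Next I would apply partial summation against hypothesis (1). Setting $A(t) := \sum_{p \leq t} f(p)(\log p)/p \sim \tau \log t$, so that $dA(t)$ behaves like $\tau\, dt/t$ in the Stieltjes sense, one obtains
\[
\sum_{p \leq x} f(p) \log p \cdot S(x/p) \;\sim\; \tau x \int_{1}^{x} \frac{S(u)}{u^{2}} \, du.
\]
Combined with the Abel-summation identity $\sum_{n \leq x} f(n) \log n = S(x) \log x - \int_{1}^{x} S(t)/t \, dt$, this yields an asymptotic Volterra-type equation for $S$. The ansatz $S(x) = C x (\log x)^{\tau - 1}$ satisfies this equation to leading order, pinning down the shape of $S(x)$ up to a single constant.

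The constant $C$ is identified by comparing the formal Dirichlet series $F(s) := \sum_{n} f(n) n^{-s}$ with $\zeta(s)^{\tau}$ near $s = 1$: hypothesis (1) forces $F(s)/\zeta(s)^{\tau}$ to extend continuously to $s = 1$ with value
\[
G(1) \;=\; \prod_{p} \left(1 - \tfrac{1}{p}\right)^{\tau} \left(1 + \tfrac{f(p)}{p} + \tfrac{f(p^{2})}{p^{2}} + \cdots \right),
\]
and a Selberg--Delange style Tauberian extraction delivers $C = G(1)/\Gamma(\tau)$. Applying Mertens' theorem $\prod_{p \leq x}(1 - 1/p)^{-1} \sim e^{\gamma} \log x$ then converts this into the stated form with the $e^{-\gamma \tau}$ factor and the Euler product over $p \leq x$. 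I expect the main obstacle to be extracting the precise constant rather than merely the correct order of magnitude, and controlling the error terms uniformly in $\tau$, particularly as $\tau \to 0^{+}$: this is exactly where hypothesis (4) becomes indispensable, since without it the prime-power contribution to the log-identity above can dominate the main term $x(\log x)^{\tau - 1}$.
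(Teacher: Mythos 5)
The paper gives no proof of this Lemma; it is simply a citation of Wirsing's 1967 theorem, so there is no paper argument to compare your sketch against. Taking your proposal on its own terms: the broad architecture — writing $\sum_{n\le x} f(n)\log n$ via the exact-prime-power decomposition of $\log n$, truncating to $a=1$ using (2)--(4), converting to an asymptotic Volterra equation, and checking the ansatz $S(x)=Cx(\log x)^{\tau-1}$ — is the correct skeleton of Wirsing's elementary argument, and the role you assign to each hypothesis is right in spirit.

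However there are two real gaps. First, verifying that $S(x)=Cx(\log x)^{\tau-1}$ \emph{satisfies} the leading-order integral equation is far from showing $S$ must have that form; the entire content of Wirsing (1967), and the reason it superseded his 1961 paper, is the bootstrap/iteration scheme that pins down the solution when $\tau<1$ (where $S(x)/x\to 0$ and crude upper bounds on the inner sum no longer close the loop). Your sketch names the equation but does not address why its solution is unique to leading order, which is where the work lies.

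Second, your extraction of the constant is not sound under the stated hypotheses. Hypothesis (1) controls only the logarithmically weighted average $\sum_{p\le x} f(p)(\log p)/p$; it does \emph{not} imply that $\sum_p (f(p)-\tau)/p$ converges, so the infinite product
\[
\prod_p\left(1-\tfrac1p\right)^{\tau}\left(1+\tfrac{f(p)}{p}+\tfrac{f(p^2)}{p^2}+\cdots\right)
\]
need not converge, and $F(s)/\zeta(s)^{\tau}$ need not extend continuously to $s=1$. This is precisely why the conclusion of the Lemma carries a \emph{truncated} Euler product $\prod_{p\le x}(\cdots)$ rather than a fixed constant times $(\log x)^{\tau}$. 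A ``Selberg--Delange style Tauberian extraction'' would also require analytic information about $F(s)$ in a complex neighbourhood of $s=1$ that the hypotheses do not supply; Wirsing's method is purely real-variable. To repair the sketch you would need to replace the Dirichlet-series step by the elementary comparison Wirsing actually uses, which produces the $x$-dependent product and the $e^{-\gamma\tau}/\Gamma(\tau)$ normalisation without ever forming a (possibly divergent) limit at $s=1$.
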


We now evaluate the counting function ${\cal Q}({\cal T})(x)$ (which is quite standard, see also
\cite{Elsholtz:2006}). In fact, for our purposes it would basically suffice to have a reasonable lower bound for ${\cal Q}({\cal T})(x)$, but since Wirsing's theorem readily supplies an asymptotic we will compute this.
\begin{lemma}
\[{\cal Q}({\cal T})(x)\sim C_{\cal{T}}\frac{x}{(\log x)^{1-\tau}}, \]
where $C_{\cal{T}} > 0$ is a constant depending on $\cal{T}$.
\end{lemma}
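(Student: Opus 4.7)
The plan is to apply Wirsing's mean value theorem directly to the indicator function $f = \mathbf{1}_{{\cal Q}({\cal T})}$, which is completely multiplicative with $f(p^n) = 1$ when $p \in {\cal T}$ and $f(p^n) = 0$ otherwise. The first step is to verify the four hypotheses of Wirsing's lemma: hypothesis (1) is exactly the assumption on $\sum_{p \leq x, p \in {\cal T}} \log p / p$ in Theorem \ref{thm:tauupperbound}; hypothesis (2) is immediate since $f(p) \leq 1$; hypothesis (3) follows from $\sum_{p,\; n \geq 2} 1/p^n = \sum_p 1/(p(p-1)) < \infty$; and hypothesis (4), needed only for $\tau \leq 1$, holds because $\sum_{p^n \leq x,\; n\geq 2} 1 \ll \sqrt{x}$ trivially, which is $\ll x/\log x$.

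Applying Wirsing's theorem then gives
\[
{\cal Q}({\cal T})(x) = (1+o(1)) \frac{x}{\log x} \frac{e^{-\gamma \tau}}{\Gamma(\tau)} \prod_{\substack{p \leq x \\ p \in {\cal T}}} \left(1 - \frac{1}{p}\right)^{-1},
\]
so the task reduces to evaluating the Mertens-type product on the right and showing it is asymptotic to a constant times $(\log x)^{\tau}$. Taking logarithms and using $-\log(1-1/p) = 1/p + O(1/p^2)$, we have
\[
\log \prod_{\substack{p \leq x \\ p \in {\cal T}}} \left(1 - \frac{1}{p}\right)^{-1} = \sum_{\substack{p \leq x \\ p \in {\cal T}}} \frac{1}{p} + O(1).
\]
The remaining task is to deduce the asymptotic $\sum_{p \leq x, p \in {\cal T}} 1/p = \tau \log\log x + C' + o(1)$ from the hypothesis $S(x) := \sum_{p \leq x, p \in {\cal T}} \log p / p = \tau \log x + C + o(1)$. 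This is a straightforward partial summation: writing $1/p = (\log p / p) \cdot (1/\log p)$ and integrating by parts gives
\[
\sum_{\substack{p \leq x \\ p \in {\cal T}}} \frac{1}{p} = \frac{S(x)}{\log x} + \int_{2}^{x} \frac{S(t)}{t \log^{2} t}\, dt + O(1) = \tau + \int_{2}^{x} \frac{\tau}{t \log t}\, dt + O(1) + o(1),
\]
where the $o(1)$ contribution from the error in $S(t)$ converges because $\int_{2}^{\infty} 1/(t \log^{2} t)\, dt < \infty$. This evaluates to $\tau \log\log x + C' + o(1)$ for some constant $C' = C'({\cal T})$.

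Combining everything yields $\prod_{p \leq x, p \in {\cal T}}(1-1/p)^{-1} \sim e^{C'} (\log x)^{\tau}$, and substituting back into Wirsing's formula gives ${\cal Q}({\cal T})(x) \sim C_{\cal T} x / (\log x)^{1-\tau}$ with $C_{\cal T} = e^{C' - \gamma \tau}/\Gamma(\tau) > 0$. The only mildly delicate step is the partial summation argument justifying that the error term $C + o(1)$ in the hypothesis on $S(x)$ produces a true constant (plus $o(1)$), rather than diverging, when one passes from $\log p / p$ to $1/p$; all other steps are either bookkeeping or direct invocation of Wirsing's theorem.
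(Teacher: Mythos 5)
Your proof is correct and follows essentially the same route as the paper: apply Wirsing's mean value theorem to the completely multiplicative indicator of ${\cal Q}({\cal T})$, reduce the Euler-type product to $\sum_{p\le x,\,p\in{\cal T}}1/p$, and obtain $\tau\log\log x + C' + o(1)$ by partial summation from the hypothesis on $\sum\log p/p$. One small imprecision worth fixing: when you write $\log\prod_{p\le x,\,p\in{\cal T}}(1-1/p)^{-1} = \sum_{p\le x,\,p\in{\cal T}}1/p + O(1)$, the $O(1)$ term is actually $\sum_{p\in{\cal T}}O(1/p^2)$, an absolutely convergent series, so it equals a genuine constant plus $o(1)$ — this is what you need (and the paper makes explicit) in order to pass from the estimate to the asymptotic $\sim$ in the final line, since a merely bounded but oscillating error would not give an asymptotic.
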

\begin{proof}[Sketch proof of Lemma]
The regularity condition on ${\cal T}$ in Theorem \ref{thm:tauupperbound} guarantees that Wirsing's theorem on sums of multiplicative functions can be applied.
We apply it to the multiplicative function
$f(p^n)= 
\begin{cases}1 & \text{ if }p \in {\cal T}\\
0& \text{ otherwise,}
\end{cases}$ 
so that
\[{\cal Q}({\cal T})(x)= \sum_{n \leq x} f(n).\] 
Here
\[\sum_{p \leq x} \frac{f(p) \log p}{p} = \tau \log x +C +o(1), \]
and all further hypotheses of Wirsing's theorem are satisfied as well. Thus

\begin{eqnarray}
\sum_{n \leq x} f(n)&=&(1+o(1)) \frac{x}{\log x} 
\frac{e^{-\gamma \tau}}{\Gamma(\tau)} 
\prod_{p \leq x} 
\left( 1+\frac{f(p)}{p}+ \frac{f(p^2)}{p^2}+ \dotsc \right) \nonumber\\
& = & (1+o(1))\frac{x}{\log x} \frac{e^{-\gamma \tau}}{\Gamma(\tau)}
\exp \left( \sum_{p \leq x, p \in {\cal T}} \log (1+\frac{1}{p-1})\right)
\nonumber \\
&\sim& C_{\cal T} \frac{x}{\log x} \exp (\tau \log \log x)\nonumber \\
&\sim & C_{\cal T} \frac{x}{(\log x)^{1-\tau}}.\nonumber
\end{eqnarray}
Here we used that $\sum_{p \leq x, p \in {\cal T}} \log (1+\frac{1}{p-1}) = \sum_{p \leq x, p \in {\cal T}} 1/p + C_{\cal T}' + o(1)$, and that from 
$\sum_{p \leq x, p \in {\cal T}} \frac{\log p}{p} = \tau \log x 
+C+o(1)$ it follows by partial summation that
$\sum_{p \leq x, p \in {\cal T}} 1/p = \tau \log \log x +C_{\cal T}''+o(1)$.

\end{proof}

\begin{proof}[Proof of Theorem \ref{thm:tauupperbound}.]
We intend to apply Theorem \ref{genthm} (1), putting $\mathcal{S} = {\cal Q}({\cal T}) \cap [1,x]$ and with ${\cal P}_0 =({\cal P} \setminus {\cal T})$ as the set of sieving primes. In view of the foregoing calculation we have $\sigma = {\cal Q}({\cal T})(x)/x = (1+o(1))C_{\cal T} \log^{\tau-1}x$ in the theorem. Also, in view of the regularity condition on ${\cal T}$ (and partial summation) we have
\begin{eqnarray}
\sum_{y/2 < p \leq y, p \in {\cal P}_0} \log p 
= (1+o(1))\frac{y}{2} - \sum_{y/2 < p \leq y, p \in {\cal T}} \log p & = & (1+o(1))(1-\tau)\frac{y}{2} \nonumber \\
& = & (1+o(1))(1-\tau)\sum_{y/2 < p \leq y} \log p . \nonumber
\end{eqnarray}
Hence we choose $c=(1+o(1))(1-\tau)$ in Theorem \ref{genthm}.

If we set ${\cal P}_0^*=({\cal P} \setminus {\cal T})\cap [K^3 , \infty)$, where $K=1000(\sigma c^2)^{-1} (\log x)^2$; and we define a completely multiplicative function $f(n)$ by $f(p)= 
\begin{cases}2 & \text{ if }p \in {\cal P}_0^* \\ 
0& \text{ otherwise}
\end{cases}$; and we let $d(n)$ denote the divisor function; then
\begin{eqnarray}
\sum_{\substack{1 < q \leq \sqrt{x}, \\ p \mid q \Rightarrow p \in {\cal P}_0^{*}}} \mu^2(q) \prod_{p \mid q} \dfrac{2}{p} \gg \sum_{q \leq \sqrt{x}} \frac{f(q)}{q} & \gg & \prod_{2 < p \leq \sqrt{x}} \left(1-\frac{2}{p}\right) \prod_{p \leq \sqrt{x}} \left(1-\frac{f(p)}{p}\right)^{-1} \sum_{n \leq \sqrt{x}} \frac{d(n)}{n} \nonumber \\
& \gg & \prod_{p \leq \sqrt{x}} \left(1-\frac{f(p)}{p}\right)^{-1} \nonumber \\
& \gg & \exp(\sum_{K^{3} \leq p \leq \sqrt{x}, p \in {\cal P} \setminus {\cal T}} \frac{2}{p}) \nonumber \\
& \gg & ((\log x)/\log\log x)^{2 -2\tau}. \nonumber
\end{eqnarray}
In the first inequality here we added the non-squarefree values of $q$ to the
sum, (which, as the reader may easily convince themselves, does not
 change the order of magnitude), 
and in the second inequality we used the general lower bound (**), from the proof of Proposition \ref{smallkscs}.

We conclude that the left hand side is much larger than $\sigma^{-1} \asymp \log^{1-\tau}x$, and so the ``Sieve Controls Size'' condition in Theorem \ref{genthm} is satisfied. Applying that theorem, we immediately obtain the upper bound
\[\max ({\cal A}(x), {\cal B}(x)) \ll_{\cal{T}} \sqrt{x} (\log x)^4.\]
\end{proof}

\begin{proof}[Proof of Corollary \ref{cor:semigroup}]
The proof of the corollary is as before. Suppose that ${\cal A}+{\cal B}+{\cal C}\sim {\cal Q}({\cal T})$.
Then by the binary result
 $({\cal A}+{\cal B})(x),({\cal A}+{\cal C})(x),({\cal B}+{\cal C})(x)
\sim x^{1/2+o(1)}$, and so by Ruzsa's inequality (Lemma \ref{lem:Ruzsa})
\[\frac{x^2}{(\log x)^{2-2\tau}}\ll_{\mathcal{T}} {\cal Q}({\cal T})(x)^{2} \ll \left(({\cal A}+{\cal B}+{\cal C})(x)\right)^2\leq x^{3/2+o(1)},\]
a contradiction.
\end{proof}

\section{Proof of Theorem \ref{Ostmann}, Ostmann's problem revisited}{\label{sec:ostmannrevisited}}
In this section we shall prove Theorem \ref{Ostmann}. In view of the existing results on Ostmann's problem (see \cite{Elsholtz:2001mathematika}, for example), we may assume that in a putative decomposition of the primes we have, for given $x$,
$$ \frac{\sqrt{x}}{\log^{5}x} \leq \mathcal{A}(x) \leq \mathcal{B}(x), $$
say. We will show that, in order for the lower bound on $\mathcal{A}(x)$ not to be violated, we must have $\mathcal{B}(x) \ll \sqrt{x} \log\log x$, which (since we must have $\mathcal{A}(x) \mathcal{B}(x) \gg \pi(x)$) immediately implies the claimed result that
$$ \frac{\sqrt{x}}{\log x \log\log x} \ll \mathcal{A}(x) \leq \mathcal{B}(x) \ll \sqrt{x} \log\log x. $$
We will do this by performing a bit more delicate analysis, requiring a lower bound estimate of Hildebrand~\cite{Hildebrand:1987}, of the final applications of the large and larger sieves in our general proofs.

Suppose, then, that for each prime $p \leq Y:= \sqrt{x}/\log^{10}x$ the set $\mathcal{A}$ occupies $\nu_{\cal A}(p) = p/2 + \epsilon_{p}$ residue classes modulo $p$. This leaves $\nu_{\cal B}(p) \leq p/2 - \epsilon_{p}$ residue classes for the set $\mathcal{B}$ to occupy, since none of the sums $a+b$ may be divisible by $p$. Now
\begin{eqnarray}
\sum_{p \leq Y} \frac{\log p}{p/2+\epsilon_{p}} + \sum_{p \leq Y} \frac{\log p}{p/2-\epsilon_{p}} & = & \sum_{p \leq Y} \frac{p \log p}{(p/2)^{2} - \epsilon_{p}^{2}} \nonumber \\
& = & 4 \sum_{p \leq Y} \frac{\log p}{p} + 4 \sum_{p \leq Y} \frac{\log p}{p} \cdot \frac{\epsilon_{p}^{2}}{(p/2)^{2} - \epsilon_{p}^{2}} \nonumber \\
& \geq & 2\log x -40\log\log x + 16\sum_{p \leq Y} \frac{\log p}{p} \frac{\epsilon_{p}^{2}}{p^{2}} + O(1), \nonumber
\end{eqnarray}
since $\sum_{p \leq Y} (\log p)/p = \log Y + O(1) = (1/2)\log x - 10\log\log x + O(1)$. But the left hand side cannot exceed $2(\log x + 1)$, because in that case we would have either
$$ \sum_{p \leq Y} \frac{\log p}{\nu_{\cal A}(p)} \geq \log x + 1 \;\;\;\;\; \textrm{ or } \;\;\;\;\; \sum_{p \leq Y} \frac{\log p}{\nu_{\cal B}(p)} \geq \log x + 1, $$
and then by the larger sieve (that is Lemma \ref{lem:Gallagherls}, with 
${\cal P}_0$ chosen as $\{p \leq Y\}$) we would have either $\mathcal{A}(x) \ll Y$ or $\mathcal{B}(x) \ll Y$, which we know to be false. Therefore we must have the following bound:
$$ \sum_{p \leq Y} \frac{\log p}{p} \frac{\epsilon_{p}^{2}}{p^{2}} \ll \log\log x. $$

We shall use the bound that we just derived to estimate the denominator in a final application of the large sieve. Firstly we note that, by the Cauchy--Schwarz inequality and standard results on the distribution of prime numbers,
$$ \sum_{\log x \leq p \leq Y} \frac{1}{p} \frac{|\epsilon_{p}|}{p} \leq \sqrt{\sum_{\log x \leq p \leq Y} \frac{1}{p \log p} \cdot \sum_{\log x \leq p \leq Y} \frac{\log p}{p} \frac{\epsilon_{p}^{2}}{p^{2}}} \ll \sqrt{\frac{1}{\log\log x} \cdot \log\log x} \ll 1. $$
In particular, this implies that $\sum_{\log x \leq p \leq Y; |\epsilon_{p}| \geq p/4} 1/p \ll 1$, and since we have $\sum_{Y < p \leq \sqrt{x}} 1/p \ll 1$ we clearly also have
$$ \sum_{\log x \leq p \leq \sqrt{x}} \frac{1}{p} \frac{|\epsilon_{p}|}{p} \ll 1 \;\;\;\;\; \text{and} \;\;\;\;\; \sum_{\log x \leq p \leq \sqrt{x}; |\epsilon_{p}| \geq p/4} \frac{1}{p} \ll 1 . $$
Now if we define a multiplicative function $f(n)$, supported on squarefree integers, by setting
$$f(p) := \left\{ 
\begin{array}{ll}
\frac{\nu_{\cal A}(p)}{p-\nu_{\cal A}(p)} 
= \frac{p/2+\epsilon_{p}}{p/2-\epsilon_{p}} 
& \textrm{if } |\epsilon_{p}| \leq p/4   \\
     0 & \textrm{otherwise}
\end{array} \right. , $$
then the large sieve (that is Lemma \ref{lem:Montgomery}, with the choice $Q = \sqrt{x}$) implies
$$ \mathcal{B}(x) \leq \frac{2x}{\sum_{n \leq \sqrt{x}} f(n)}. $$

It remains to show that $\sum_{n \leq \sqrt{x}} f(n) \gg \sqrt{x}/\log\log x$, 
and to do this we shall use the following nice result, which is a slightly
simplified version of Theorem 2 of Hildebrand \cite{Hildebrand:1987}.
\begin{theorem}[Hildebrand, 1987]
Let $X \geq Z \geq 2$. Let $f$ be a multiplicative function supported on squarefree numbers, and satisfying $0 \leq f(p) \leq K$ for some constant $K \geq 1$, for all primes $p$. Then
\begin{eqnarray}
\frac{1}{X} \sum_{n \leq X} f(n) & \geq & \frac{e^{-\gamma(K-1)}}{\Gamma(K)} \prod_{p \leq X} \left(1-\frac{1}{p} \right) \left(1 + \frac{f(p)}{p} \right) \cdot \nonumber \\
&& \cdot \left( \sigma_{-}\left(e^{\sum_{Z \leq p \leq X} (1-f(p))^{+}/p}\right)(1+O((\frac{\log Z}{\log X})^{\alpha})) + O(e^{-((\log X)/(\log Z))^{\alpha}}) \right), \nonumber
\end{eqnarray}
where $\gamma \approx 0.577$ is Euler's constant; $(1-f(p))^{+}$ denotes the positive part of $(1-f(p))$; $\alpha > 0$ is an absolute constant; the implicit constant in the ``big Oh'' terms depends on $K$ only; and $\sigma_{-}(u)$ is a certain fixed function of $u \geq 1$ that satisfies $\sigma_{-}(u) \gg u^{-u}$.
\end{theorem}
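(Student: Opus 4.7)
The plan is to combine Wirsing's mean-value theorem, which supplies the leading constant $e^{-\gamma(K-1)}/\Gamma(K)$ together with the Euler-product main factor, with a Buchstab-type functional equation that captures the loss caused by primes $p \in [Z,X]$ where $f(p)<1$. That loss is what produces the Dickman-like factor $\sigma_{-}(u)\gg u^{-u}$, by analogy with how the Dickman function $\rho$ emerges in the analysis of smooth numbers.

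First I would settle the case $Z=X$, where the sum $\sum_{Z\leq p\leq X}(1-f(p))^{+}/p$ is empty so $\sigma_{-}(1)$ is an absolute constant. Here the claimed bound collapses to a one-sided form of Wirsing's theorem. Since the regularity hypotheses Wirsing assumed on $\sum_{p}f(p)\log p/p$ are not available in our setting, I would instead use an unconditional Hal\'asz--Montgomery / Hall--Tenenbaum style lower bound for mean values of nonnegative multiplicative functions; such a bound yields exactly the product $\prod_{p\leq X}(1-1/p)(1+f(p)/p)$ with the prescribed Gamma-factor constant, using only $0\leq f(p)\leq K$ and support on squarefrees.

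Next, for general $Z<X$, I would split each squarefree $n\leq X$ as $n=dm$, where $d$ collects the prime factors of $n$ that are $\leq Z$ and $m$ collects those that are $>Z$. Multiplicativity and the squarefree support give $f(n)=f(d)f(m)$, hence
$$\sum_{n\leq X} f(n)\;=\;\sum_{\substack{d\leq X \\ \text{all prime factors}\leq Z}} f(d)\sum_{\substack{m\leq X/d \\ \text{all prime factors}>Z}} f(m),$$
and I would lower bound this by retaining only the $d=1$ contribution. The remaining inner sum counts integers with all prime factors in $(Z,X]$; on this range I would set up a Buchstab-style recursion, peeling off the largest prime factor of $m$, which converts the problem into an integral inequality for the normalised inner sum regarded as a function of $u:=\exp\bigl(\sum_{Z\leq p\leq X}(1-f(p))^{+}/p\bigr)$. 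The minimal solution of that integral inequality is exactly a Dickman-like $\sigma_{-}$, which satisfies the $u^{-u}$ bound by the usual iteration. The positive-part $(1-f(p))^{+}$ enters naturally because only primes where $f(p)<1$ cause a loss relative to the Wirsing main term.

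The hardest part will be the two uniform error terms. The polynomial term $O((\log Z/\log X)^{\alpha})$ should come from a saddle-point / Perron-type contour estimate on the truncated Dirichlet series $\sum_{n\leq X}f(n)n^{-s}$ near the line $\Re s=1$, with a shift chosen to match the scale $(\log X)/(\log Z)$; the exponentially small $O(\exp(-((\log X)/\log Z)^{\alpha}))$ term will require a Rankin-trick tail estimate to discard atypical $n$ whose factorisation deviates sharply from the Buchstab prediction. A further subtlety is making everything uniform in $K$: since $e^{-\gamma(K-1)}/\Gamma(K)$ tends to zero as $K$ grows, one must track the Euler product, the Gamma-factor and the support hypothesis $f\leq K$ together rather than estimating them in isolation.
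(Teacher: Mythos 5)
The paper does not prove this statement at all; it is quoted verbatim as ``a slightly simplified version of Theorem 2 of Hildebrand \cite{Hildebrand:1987}'' and then applied as a black box. So there is no ``paper's own proof'' against which to compare, and you should be evaluated on whether your sketch would actually produce the bound.

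There is a genuine gap in the sketch at the point where you propose to write $n=dm$ with the $\leq Z$-smooth part $d$ and the $Z$-rough part $m$, and then ``lower bound by retaining only the $d=1$ contribution.'' Dropping every $d>1$ forfeits exactly the factor $\prod_{p\leq Z}(1+f(p)/p)$ that sits inside the Euler product on the right-hand side of the bound you are trying to prove. To see the loss concretely, take $f\equiv 1$ on squarefrees: then $\sum_{n\leq X}f(n)\asymp X$, while the $d=1$ piece (squarefree $m\leq X$ with all prime factors in $(Z,X]$) is only $\asymp X/\log Z$; but the claimed right-hand side is $\asymp X\,\sigma_-(1)$, so you have discarded a factor of $\log Z$ which no Buchstab iteration on the $m$-sum can recover. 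More generally, if $f(p)$ is close to $K$ on the primes $p\leq Z$ you lose a factor of size roughly $(\log Z)^{K}$. Any correct argument must keep the small and large prime contributions coupled, or at least replace the $d$-sum by an honest lower bound of the right order rather than by a single term.

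Beyond that structural problem, two further points are worth flagging. First, the plan to obtain the $Z=X$ case from an unconditional Hal\'asz--Montgomery/Hall--Tenenbaum bound is hazardous: those techniques are naturally upper-bound tools for mean values, and getting a matching \emph{lower} bound of the precise form $\frac{e^{-\gamma(K-1)}}{\Gamma(K)}\prod_{p\leq X}(1-1/p)(1+f(p)/p)$ is essentially the content one is trying to prove, not a known off-the-shelf input. Second, the proposed source of the two error terms (a Perron/saddle-point contour estimate plus a Rankin tail) is not the mechanism in Hildebrand's paper; his Theorem 2 is proved by an elementary iteration scheme, deriving a delay-type integral inequality for the normalised mean value and showing $\sigma_-$ is the extremal solution of that inequality, with the error terms coming out of controlling the iteration. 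Your instinct that a Dickman-type $\sigma_-$ with $\sigma_-(u)\gg u^{-u}$ arises from a Buchstab-style functional equation is exactly right in spirit, but the surrounding scaffolding as written does not close.
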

We can apply the theorem to the function $f(n)$ that we defined above, with the choices $X= \sqrt{x}$, $Z = \log x$ and $K=3$. We find that
$$ \sum_{\log x \leq p \leq \sqrt{x}} \frac{(1-f(p))^{+}}{p} \ll \sum_{\log x \leq p \leq \sqrt{x}, \atop |\epsilon_{p}| > p/4} \frac{1}{p} + \sum_{\log x \leq p \leq \sqrt{x}} \frac{|\epsilon_{p}|}{p^{2}} \ll 1, $$
in view of our previous calculations, so that
$$ \sum_{n \leq \sqrt{x}} f(n) \gg \sqrt{x} \prod_{p \leq \sqrt{x}} \left(1-\frac{1}{p} \right) \left(1 + \frac{f(p)}{p} \right) \gg \sqrt{x} \prod_{p \leq \log x} \left(1-\frac{1}{p} \right) \gg \frac{\sqrt{x}}{\log\log x}, $$
as claimed.
\qed

\bibliographystyle{alpha}

\noindent Christian Elsholtz,
Institut f\"ur Mathematik A, Technische Universit\"at Graz,
Steyrergasse 30/II, A-8010 Graz, Austria\\  
email: elsholtz@math.tugraz.at\\
\ \\
Adam J.~Harper,
Department of Pure Mathematics and Mathematical Statistics,
Wilberforce Road,
Cambridge CB\textup{3} \textup{0}WA,
England \\
email: A.J.Harper@dpmms.cam.ac.uk \\
\ \\
{\em Current address:} Centre de recherches math\'{e}matiques, Universit\'{e} de Montr\'{e}al, Case postale 6128, Succursale Centre-ville, Montr\'{e}al, QC H3C 3J7, Canada \\
email: harperad@crm.umontreal.ca

\ \\
\ \\
2010 Mathematics Subject Classification:\\
11N25, 11N36, 11P70
\end{document}